\tikzset{
dot/.style = {circle, fill, minimum size=#1,
              inner sep=0pt, outer sep=0pt},
dot/.default = 1pt % size of the circle diameter
}
\theoremstyle{plain}
\newtheorem{Def}{Definition}[section]
\newtheorem{Eg}{Example}[section]
\newtheorem{Prop}{Proposition}[section]
\newtheorem{Thm}{Theorem}[section]
\newtheorem{Lem}{Lemma}[section]
\newtheorem{Coro}{Corollary}[section]
\newtheorem{Rem}{Remark}[section]
\newproof{Pot2}{Proof of Theorem \ref{Thomo}}
\newcommand{\la}{\langle}
\newcommand{\ra}{\rangle}
\newcommand{\lra}{\longrightarrow}
\newcommand{\rra}{\rightarrow}
\newcommand{\orra}{\overrightarrow}
\newcommand{\olla}{\overleftarrow}
\newcommand{\os}{\overset}
\newcommand{\RNum}[1]{\uppercase\expandafter{\romannumeral #1\relax}}
  \newcommand\figcaption{\def\@captype{figure}\caption}
  \newcommand\tabcaption{\def\@captype{table}\caption}
\begin{document}

\begin{frontmatter}

\title{Limited bisimulations for nondeterministic fuzzy transition systems}
%Rough approximations based on limited approximate simulations and its application in pattern matching}

%\title{Rough approximations based on bisimilarity--- \\Revisiting generalized rough sets based on relations}

%% use optional labels to link authors explicitly to addresses:
%% \author[label1,label2]{}
%% \address[label1]{}
%% \address[label2]{}

\author[Qs]{Sha Qiao}%\corref{cor1}}
%\author[pz1]{Ping Zhu}
\ead{sqiao@sdu.edu.cn}

\cortext[cor1]{Corresponding author}
\author[Qs]{Jun-e Feng\corref{cor1}}
%%%\ead{pzhubupt@gmail.com}
\ead{fengjune@sdu.edu.cn}

%\author[lgl]{Guilong Liu}
%\ead{liuguilong@blcu.edu.cn}
\author[pz1]{Ping Zhu}
%%%\ead{pzhubupt@gmail.com}
\ead{pzhubupt@bupt.edu.cn}
%%%\author[pz1]{Ping Zhu}
%%%%\author[pz2]{Witold Pedrycz}%\corref{cor1}}
%%%%\ead{wpedrycz@ualberta.ca}

%\author[xhy]{Huiyang Xie}
%\ead{xhyang@bjfu.edu.cn}

%\author[pz2]{Qiaoyan Wen}
%\ead{wqy@bupt.edu.cn}
\address[Qs]{School of Mathematics, Shandong University, Jinan 250100, China}
\address[pz1]{School of Science, Beijing University of Posts and Telecommunications, Beijing 100876,
China}
%%%%\address[pz2]{Department of Electrical and Computer Engineering, University of Alberta, Edmonton AB T6R 2V4, Canada}
%%\address[pz2]{State Key Laboratory of Networking and Switching, Beijing University of Posts and Telecommunications, Beijing 100876, China}

%\address[lgl]{School of Information Science, Beijing Language and Culture University, Beijing 100083, China}

%\address[xhy]{College of Science, Beijing Forestry University, Beijing 100083,
%China}

\begin{abstract}
%Bisimulation techniques have been used in fuzzy transition systems (FTSs) to compare their behavior.
The limited version of bisimulation, called limited approximate bisimulation, has recently been introduced to fuzzy transition systems (NFTSs).
%which are more general structures than FTSs. 
%To measure the degree of similarity between two states in NFTSs in the neighbouring subgraphs,
This article extends limited approximate bisimulation to NFTSs, which are more general structures than FTSs, to introduce a notion of $k$-limited $\alpha$-bisimulation by using an approach of relational lifting, where $k$ is a natural number and $\alpha\in[0,1]$. To give the algorithmic characterization, a fixed point characterization of $k$-limited $\alpha$-bisimilarity is first provided. Then
$k$-limited $\alpha$-bisimulation vector with $i$-th element being a $(k-i+1)$-limited $\alpha$-bisimulation is introduced to investigate conditions for two states to be $k$-limited $\alpha$-bisimilar, where $1\leq i\leq k+1$. Using these results, an $O(2k^2|V|^6\cdot\left|\lra\right|^2)$ algorithm is designed for computing the degree of similarity between two states, where $|V|$ is the number of states of the NFTS and $\left|\lra\right|$ is the greatest number of transitions from states. Finally, the relationship between $k$-limited $\alpha$-bisimilar and $\alpha$-bisimulation under $\widetilde{S}$ is showed, and by which, a logical characterization of $k$-limited $\alpha$-bisimilarity is provided.
%%It can measure the degree of similarity between two states in the neighbouring subgraphs.
%%%Some properties of the lifting operation are presented. Based on the lifting operation, we define a notion of $\alpha$-bisimulation under $\widetilde{S}$ for nondeterministic fuzzy transition systems (NFTSs), where $\alpha\in[0,1]$.
%%It is much more natural and robust than some behavioral measures, and can measure the degree of similarity between the two states which are deemed to be equivalent intuitively but simply distinguished by other bisimulations.
%The relationships between $k$-limited $\alpha$-bisimilarity and $\alpha$-bisimilarity under lifting function $\widetilde{S}$ are presented. Then a fixed point characterization of $k$-limited $\alpha$-bisimilarity is provided. 

%Some necessity and sufficiency conditions for two states to be $k$-limited $\alpha$-bisimilar is showed, and based on which, a logical characterization of $k$-limited $\alpha$-bisimilarity is provided.

% Finally, an algorithm for computing the degree of similarity between two states is given.
% Finally, based on the relationships between $k$-limited $\alpha$-bisimilarity and $\alpha$-bisimilarity under $\widetilde{S}$, a logical characterization of $k$-limited $\alpha$-bisimilarity is provided.
\end{abstract}
\begin{keyword} Fuzzy transition system \sep fuzzy bisimulation\sep limited bisimulation\sep logical characterization
\end{keyword}
\end{frontmatter}

\section{Introduction}\label{introd}
Bisimulation \cite{Milner1980bisimulation,Park1981bisimulation} is a useful notion to test the behavioral equivalences for systems or states in a system. It has received growing attention owing to its widely applications in many areas of computer science \cite{Milnerlimited,Park1981bisimulation,Sangiorgi(2009)}, such as model checking by minimizing structures.% the specification and verification of concurrent systems.

Broadly speaking, there are two types of bisimulations, namely, {\it crisp} and {\it fuzzy}, for graph-based structures and fuzzy graph-based structures, where graph-based structures can be labeled transition systems, automata, Boolean networks, social networks and interpretations in description logics, as well as fuzzy graph-based structures can be FTSs, fuzzy automata, fuzzy social networks.
Specifically, crisp bismulations, which are defined as crisp relations, include classical bisimulation, approximate bisimulation, and limited bisimulation.
For instance, to handle the coarseness of fuzzy-language equivalence for FTSs, Cao et al. \cite{Cao(2011)} introduced bisimulations.
% Qiu et al. \cite{Deng(2015)}, for simulation equivalence, investigated the supervisory control theory of fuzzy discrete event systems. Furthermore, Xing et al. \cite{Xing(2012)} applied (bi)simulations to the analysis and control of fuzzy discrete event systems. 
Moreover, Li et al. \cite{LiR(2022)} studied bisimulations of probabilistic Boolean networks and minimized the model.
Yang and Li \cite{Yang(2018),Yang1(2020)} introduced approximate bisimulations and minimized the fuzzy automata.
%applied them to the problem of state reduction of fuzzy automata, and then studied fuzzy $\epsilon$-approximate regular languages for fuzzy automata and minimal deterministic fuzzy automata $\epsilon$-accepting them \cite{Yang2(2020)}. They also investigated intuitionistic fuzzy $(\otimes,N)$-general regular languages and their minimization implementation \cite{Yang(2022)}.
Recently, 
Qiao et al. \cite{Qiao(2022)} investigated approximate bisimulations for NFTSs.

Fuzzy bisimulations include classical fuzzy bisimulation, limited fuzzy bisimulation, approximate bisimulation, and distribution-based fuzzy bisimulation. For example, 
\'{C}iri\'{c} et al. \cite{Ciric(2012),Ciric and J(2012)} introduced fuzzy simulations and fuzzy bisimulations for fuzzy automata. They characterized fuzzy bisimulations by factor fuzzy automata, and illustrated, under fuzzy bisimulations, the invariance of languages. 
%Yang and Li \cite{Yang(2018),Yang1(2020)} introduced approximate bisimulations defind as crisp relations and applied them to the problem of state reduction of fuzzy automata, and then studied fuzzy $\epsilon$-approximate regular languages for fuzzy automata and minimal deterministic fuzzy automata $\epsilon$-accepting them \cite{Yang2(2020)}. They also investigated intuitionistic fuzzy $(\otimes,N)$-general regular languages and their minimization implementation \cite{Yang(2022)}.
Based on the fuzzy bisimulations, Stanimirovi\'{c} et al. \cite{Stanimirovic(2022)} generalized the work \cite{Yang1(2020)} to fuzzy environments. The fuzzy bisimulation proposed perform better than crisp bisimulation \cite{Yang1(2020)} in reduction.
Mici\'{c} et al. \cite{MicicandNguyen(2022)}  
computed the greatest $\lambda$-approximate bisimulation and investigated its existence for fuzzy automata. 
To get better reductions of fuzzy social systems than regular and structural fuzzy relations, they gave the approximate versions of regular and structural fuzzy relations\cite{Micic(2022)}.
%namely, approximate regular and approximate structural fuzzy relations, and then conducted approximate positional analysis of a fuzzy network \cite{Micic(2022)}.

In fact, most real-world systems are full of various uncertainties, and so the approximation idea is a important research approach. For instance, Lyu et al. \cite{LyuAppro(2020)} investigated a universal approximation of multi-input multi output fuzzy systems, and Sun and Li \cite{Sun(2023)} discussed the universal approximation of multi-input single-output hierarchical fuzzy system as well as the corresponding algorithmic characterization.

%and  \cite{Xiang(2020)}, via neural network approximation, discusses adaptive periodic event-triggered output feedback control of switched nonlinear systems.

Fan \cite{Fan(2015)} studied fuzzy bisimulation for G\"{o}del modal logic.
%and then Du and Zhu \cite{Du2018} investigated fuzzy bisimulations for fuzzy relational structures.
Subsequently, Nguyen et al. \cite{Nguyena(2020),NguyenaandTran(2020)} studied bisimulation and bisimilarity for fuzzy description logics, and fuzzy bisimulations for fuzzy structures under the G\"{o}del semantics, where fuzzy bisimulation have potential applications in terms of analyzing fuzzy or weighted social networks. Recently, for fuzzy graph-based structures, a novel method is proposed by \cite{Nguyenand I(2023)} to compute the greatest fuzzy bisimulations and \cite{Nguyen(2023)} computes the greatest fuzzy auto-bisimulation with the fuzzy partition they proposed. 
For NFTSs, to enrich the works \cite{Ciric(2012),Ciric and J(2012)} which are invalid in compositional reasoning, Cao et al. \cite{Cao2013} proposed behavioral distance, and subsequently, Wu et al. \cite{Wu(2016),Wuandchen(2018),WuandDeng(2018)} gave algorithmic and logical characterizations of bisimulations for NFTSs. 
Lately, Qiao et al. \cite{Qiao(2022)} put forward a novel method of lifting a relation via lifting function $\widetilde{S}$, and then introduced $\alpha$-bisimulation under $\widetilde{S}$, where $\alpha\in[0,1]$.

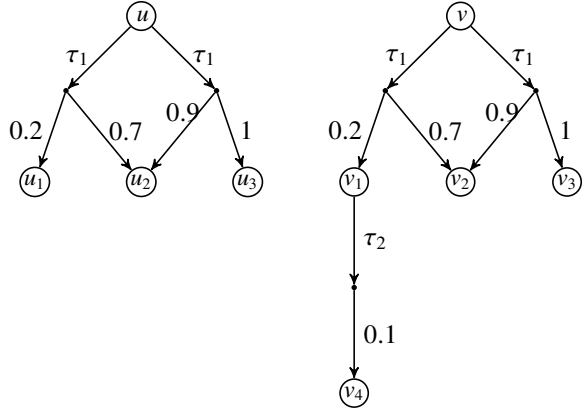
\begin{figure}[h] %\label{Figure1}
	\tikzstyle{point}=[coordinate,on grid]
	\tikzstyle{solid node}=[circle,draw,inner sep=0.5,fill=black]
	\tikzstyle{state}=[circle,draw,inner sep=0.5,scale=1,font=\bfseries\small]%\normalsize]
	%\tikzstyle{solid node}=[circle,draw,inner sep=0.1,fill=black]
	\centering
	\begin{tikzpicture}[->,>=stealth',shorten >=0pt,auto,node distance=1.4cm,semithick]
		\node[state]         (D2)  {$u_2$};
		\node[below of =D2] (o2) { };
		\node[state]         (C2) [left of=D2] {$u_1$};
		\node[state]         (E2) [right of=D2] {$u_3$};
		\node[state,node distance=2.2cm]         (B0) [above of=D2] {$\;u\,$};
		\node[solid node]         (C1) [below left of=B0] {};
		\node[solid node]         (D1) [below right of=B0] {};
		
		\path (B0) edge node [left]{$\tau_1$} (C1);
		\path (B0) edge node [right]{$\tau_1$} (D1);
		\path (C1) edge node [left] {0.2} (C2);
		\path (C1) edge node [right] {0.7} (D2);
		\path (D1) edge node [above] {0.9} (D2);
		\path (D1) edge node [right] {1} (E2);
		
		\node[state]         (C2') [right of=E2] {$v_1$};
		\node[state]         (D2') [right of=C2'] {$v_2$};
		\node[state]         (E2') [right of=D2'] {$v_3$};
		
		\node[state,node distance=2.2cm]         (B0') [above of=D2'] {$\;v\,$};
		
		\node[solid node]         (C1') [below left of=B0'] {};
		\node[solid node]         (D1') [below right of=B0'] {};
		
		\node[solid node]         (C3') [below of=C2'] {};
		\node[state]         (D3') [below of=C3'] {$v_4$};
		
		\path (B0') edge node [left]{$\tau_1$} (C1');
		\path (B0') edge node [right]{$\tau_1$} (D1');
		\path (C1') edge node [left] {$0.2$} (C2');
		\path (C1') edge node [right] {$0.7$} (D2');
		\path (D1') edge node [above] {$0.9$} (D2');
		\path (D1') edge node [right] {$1$} (E2');
		\path (C2') edge node {$\tau_2$} (C3');
		\path (C3') edge node {0.1} (D3');
	\end{tikzpicture}
	\caption{States $s$ and $t$ are approximate bisimilar.} \label{Figure1}
\end{figure}

Although $\alpha$-bisimulation under $\widetilde{S}$ \cite{Qiao(2022)} can cope with more cases conveniently than the bisimulations \cite{Cao2013,Wu(2016)},
%based on the method of relational lifting by using weight functions,
it is not applicable to some cases. For example, for the states $u$ and $v$ in Fig. \ref{Figure1}, by $\alpha$-bisimulation under $\widetilde{S}$ \cite{Qiao(2022)} (see the following Definition \ref{Def app-bisimulation}), $u$ and $v$ are $0.9$-bisimilar under $\widetilde{S}$ over {\L}ukasiewicz algebra. This is unjustifiable since $u$ can perform action $\tau_2$ to some state after performing action $\tau_1$, while $u$ can not reach any state by the above way. In addition, $u$ and $v$ are 0-bisimilar under $\widetilde{S}$ in G\"{o}del algebra and Product algebra, which is also unreasonable since they can match each other completely except the transition $v_1\os{\tau_2|0.1}\lra v_4$, i.e., they precisely match each other when $u$ and $v$ only perform the action $\tau_1$ or the first transitions from them can match each other completely. Since $u$ can perform $\tau_1$ to distributions $\frac{0.2}{u_1}+\frac{0.7}{u_2}$ and $\frac{0.9}{u_2}+\frac{1}{u_3}$, the system provided in Fig. \ref{Figure1} is an NFTS, and so the limited approximate bisimulation for FTSs introduced in \cite{Qiao(2021)} is invalid. These make it urgent to introduce a new bisimulation for NFTSs.

Fortunately, $k$-limited bisimilarity initiated by Milner \cite{Milner1980bisimulation} provides a idea of proposing new bisimulation. It can measure the similarity of states in the neighbouring subgraphs rather than the whole graphs. 
%Then Du et al. \cite{Du(2018)} proposed $k$-limited similarity and applied it to graph pattern matching. 
Recently, Qiao and Zhu \cite{Qiao(2021)} generalized FTSs to quantitative fuzzy transitive systems by equipping the set of labels with a relation to make measuring the similarity between two labels possible, for which, they proposed limited approximate bisimilarity (similarity) and gave some algorithms for computing their greatest scenario. 

This article is devoted to define a new bisimilarity, named $k$-limited $\alpha$-bisimilarity, for more general structures NFTSs, where $k$ is a natural number and $\alpha\in[0,1]$. If state $s$ can perform action $\tau$ to distribution $\mu$, we write $u\os{\tau}\lra p$ and call it a fuzzy transition. If we say that state $u$ can reach the transition $u\os{\tau}\lra p$ for convenience, then the new bisimilarity can measure the degree of similarity by considering the first $k$ transitions the two states studied can reach rather than the whole transitions. The reflexivity, symmetric, and transitivity of $k$-limited $\alpha$-bisimilarity are presented. Thanks to Tarski's fixed-point theorem, $k$-limited bisimulation is a post-fixed point of some suitable monotonic function, and vice versa.
The equivalent condition of $k$-limited $\alpha$-bisimilarity and $\alpha$-bisimilarity under $\widetilde{S}$ is presented, and further some equivalent conditions for two states to be $k$-limited $\alpha$-bisimilar are provided. Resorting to this, the logical characterization of limited approximate bisimulation can be converted to that of approximate bisimulation under lifting fuction $\overline{S}$ in terms of the fuzzy modal logic.
One main contribution is that a polynomial time algorithm is devised to calculate the greatest $\alpha$ such that the two states are $k$-limited $\alpha$-bisimilar, i.e., the degree of $k$-limited similarity between them, is presented.

The reminder of the work is organized as follows. Section \RNum 2 introduces necessary definitions of fuzzy sets and NFTSs, reviews some properties of some operators. In Section \RNum 3, $k$-limited $\alpha$-bisimilarity for NFTSs is defined, and then some properties are presented. 
In Section \RNum 4, a fixed point characterization of $k$-limited $\alpha$-bisimilarity is investigated. 
We study conditions for two states to be $k$-limited $\alpha$-bisimilar and devise an algorithm for computing the degree of $k$-limited similarity between two states in Section \RNum 5.
In Section \RNum 6, the equivalence between $k$-limited
$\alpha$-bisimilarity and $\alpha$-bisimilarity under lifting function $\widetilde{S}$ is derived, and the logical characterization of $k$-limited $\alpha$-bisimilarity is obtained. Finally, Section \RNum 7 makes a brief conclusion and future work.

\section{Preliminaries}
\subsection{Operators and NFTSs}
In this article, the operators $\vee$ and $\wedge$ are denoted respectively by $s\vee t={\rm max}\{s,t\}$ and $s\wedge t={\rm min}\{s,t\}$ for any $s,t\in [0,1]$, i.e., they represent the maximum operation and the minimum operation on $[0,1]$, respectively. Let $\otimes$ be a binary operation on $[0,1]$. If it is commutative: $s\otimes t=t\otimes s$, is associative: $r\otimes(s\otimes t)=(r\otimes s)\otimes t$, is increasing: $r\otimes t\leq s\otimes t$ if $r\leq s$, and has $1$ as the neutral element, then $\otimes$ is called a {\it $t$-norm}, where $r,s,t\in[0,1]$. Additionally, $\otimes$ is called a left continuous $t$-norm if it also satisfies $r\otimes(\bigvee_{i\in I}r_i)=\bigvee_{i\in I}(r\otimes r_i)$, where $I$ is denoted as any index set and $r_i\in [0,1]$. The {\it residuum} $\rightarrow$ of left continuous $t$-norm $\otimes$ is defined by 
\begin{equation}\label{t-norm and residuum}
	t\rightarrow r=\bigvee\{s\in L\mid s\otimes t\leq r\}.
\end{equation}
Three commonly used left continuous $t$-norms are defined for $s,t\in[0,1]$ below.
\begin{table}[h]
	\setlength{\tabcolsep}{9pt}
	\begin{tabular}{|l|l|l|l|}
		\hline 
		& G\"{o}del & Product & {\L}ukasiewicz\\
		\hline
		$s\otimes t$ & $s\wedge t$  & $s\cdot t$ & $(s+t-1)\vee 0$\\
		\hline
		$s\rra t$ &$1$ if $s\leq t$, and  &1 if $s\leq t$, and &$(1-s+t)\wedge1$\\
		&$t$, otherwise 
		&$t/s$, otherwise  & \\
		\hline
	\end{tabular}
\end{table}
%\vspace{0.5cm}

The algebra $\mathcal{L}=(L,\wedge,\vee,\otimes,$ $\rightarrow,0,1)$ is called a {\it residuated lattice} \cite{R.Bel(2002),P.Haj(1998)} with 0 being the least element and 1 the greatest element. In this work, the operator $\otimes$ is arranged as a left continuous $t$-norm, the operator $\rightarrow$ the corresponding residuum, $\mathbb{N}$ the set of all natural numbers.
%Then we have the following properties of residuated lattices.

%%\begin{Lem}[see \cite{R.Bel(2002)}]
%%{ In a residuated lattice $\mathcal{L}$, $r,s,t\in L$, $\{s_i\}_{i\in I}$, $\{t_i\}_{i\in I}\subseteq L$, the following properties are satisfied, assuming the considered sups and infs exist,
	%\begin{align*}iffy 
	%%\begin{enumerate}
	%%\item[(1)] $s\leq t$ iff $s\rra t=1$;
	%%\item[(2)] $s\otimes (s\rra t)\leq t$;
	%%\item[(3)] $t\leq r$ implies $s\otimes t\leq s\otimes r$;
	%%\item[(4)] $t\leq r$ implies $s\rra t\leq s\rra r$;
	%%\item[(5)] $t\leq r$ implies $r\rra s\leq t\rra s$;
	%%\item[(6)] $s\rra t\leq (s\otimes r)\rra (t\otimes r)$.
	%%%%&(7)\ (x\rra y)\otimes(y\rra z)\leq x\rra z;\\
	%%%%&(8)\ \bigvee_{i\in I}x_i\rra y=\bigwedge_{i\in I}(x_i\rra y);\\
	%%%%&(9)\ x\rra\bigwedge_{i\in I}y_i=\bigwedge_{i\in I}(x\rra y_i);\\
	%%%%&(10)\ \bigvee_{i\in I}(x\rra y_i)\leq x\rra\bigvee_{i\in I}y_i;\\
	%%%%&(11)\ (x_1\leftrightarrow x_2)\otimes(y_1\leftrightarrow y_2)\leq(x_1\rra y_1)\leftrightarrow (x_2\rra y_2);\\
	%%%%&(12)\ \bigwedge_{i\in I}(x_i\leftrightarrow y_i)\leq\bigwedge_{i\in I}x_i\leftrightarrow\bigwedge_{i\in I}y_i;\\
	%%%%&(13)\ \bigwedge_{i\in I}(x_i\leftrightarrow y_i)\leq\bigvee_{i\in I}x_i\leftrightarrow\bigvee_{i\in I}y_i;\\
	%%%%&(14)\ \bigvee_{i\in I}(x_i\rra y)\leq\bigwedge_{i\in I}x_i\rra y;\\
	%%%%&(15)\ (x_1\rra y_1)\otimes (x_2\rra y_2)\leq (x_1\otimes x_2)\rra (y_1\otimes y_2);\\
	%%%%&(16)\ x\otimes\bigvee_{i\in I}y_i=\bigvee_{i\in I}(x\otimes y_i).
	%%\end{enumerate}
	%\end{align*}
	%%}
%%\end{Lem}

Let $V$ be a nonempty finite set of objects. We denote by $\mathcal{R}(V)$ the set of all fuzzy subsets of $V$, $\mathcal{P}(V)$ the set of all subsets of $V$. The fuzzy set of $U\times V$ is also called a fuzzy relation between $U$ and $V$.
The inverse of the fuzzy relation $R$, denoted by $R^{-1}$, is denoted by $R^{-1}(u,v)=R(v,u)$ for any $(u,v)\in U\times V$.
The {\it support} of a fuzzy set $p$ is a crisp set defined as ${\rm su}(p)=\{u\in V\mid p(u)>0\}$. For more details, we refer the reader to \cite{Cao2013,Ciric(2012)}.
%for more details about fuzzy sets and fuzzy relations.
%A fuzzy relation $R$ on a set $V$ is
%\begin{enumerate}{}
%	\item[(1)] {\it reflexive} if $R(u,u)=1$, for all $u\in V$;
%	\item[(2)] {\it symmetric} if $R^{-1}(u,v)=R(v,u)$, for all $u,v\in V$;
%	\item[(3)] {\it transitive} if $R(u,v)\otimes R(v,w)\leq R(u,w)$, for all $u,v,w\in V$.
%\end{enumerate}
%A reflexive, symmetric, and transitive fuzzy relation $R$ on $V$ is called a {\it fuzzy equivalence relation}.

Let $\Sigma$ be a set of labels and $\delta\subseteq S\times \Sigma\times\mathcal{R}(V)$ be a transition relation. Then the triple $\mathcal{S}=(V,\Sigma,\delta)$ is a {\it nondeterministic fuzzy transition system}. 
Obviously, for any $u\in V$ and $\tau\in\Sigma$, $\delta(u,\tau)\subseteq\mathcal{R}(V)$ is a set of fuzzy subsets of $V$. If $p\in\delta(v,\tau)$, we write $u\os{\tau}\lra p$ and call it a {\it fuzzy transition}. Furthermore, we also can say that $u$ can reach the distribution $p$ after 1 transition with label $\tau$ or $u$ can reach the transition $u\os{\tau}\lra p$. If we define a {\it label sequence} as the sequence with elements belonging to $\Sigma$, then in Fig. \ref{Figure1}, $v$ can reach the distribution $\frac{0.1}{v_3}$ after 2 transitions with the label sequence $(\tau_1,\tau_2)$. Set $\delta$ is said to be finite if $\delta(u,\tau)$ is finite for any $u\in V$ and $\tau\in\Sigma$. An NFTS is called finite if $V$, $\Sigma$, and $\delta$ are all finite. For later use, we define by $R_{\delta}=\{(u,\tau|\gamma,v)\mid\exists p\in\delta(u,\tau)\mbox{ such that } p(u)=\gamma>0\}$ the set of {\it labeled fuzzy directed edges} of $\mathcal{S}$, which is also a ternary relation. 
%Using analogous definition method as above,
The successor neighborhood and predecessor neighborhood of $X\subseteq V$ are given by
\begin{align*}
	(\orra{R_{\delta}})_X=&\{v\in V\mid (u,\tau|\gamma,v)\in R_{\delta}, u\in X\} \quad \mbox{and} \quad (\olla{R_{\delta}})_X=\{v\in V\mid (v,\tau|\gamma,u)\in R_{\delta}, u\in X\},
\end{align*}
respectively. If $\delta\subset V\times V$, then the neighborhoods degenerate to classic neighborhoods \cite{Pawlak(1982)}.

A state $u$ is called a steady state if $u$ can not reach any states in $\mathcal{S}$, i.e., $(\overrightarrow{R_{\delta}})_u=\emptyset$. 
A {\it path} from $u_0$ to $u_n$ is defined as a finite sequence $(u_0,\tau_1|\gamma_1,u_1,\tau_2|\gamma_2,u_2,\cdots,u_{n-1}$, $\tau_{n}|\gamma_n,u_n)$ satisfying that $(u_i,\tau_{i+1}|\gamma_{i+1},u_{i+1})\in R_{\delta}$ for all $0\leq i\leq n-1$. The number of labeled fuzzy directed edges in a path $p$ is defined as the {\it length}, denoted by $len(p)$. Furthermore, the maximum length of paths from $u_0$, recorded as $l(u_0)$, is defined as maximum length of paths from $u$.

From now on, unless specifically noted, $\epsilon\in[0,1]$, and there is no distinction between using fuzzy sets and distributions. For simplicity, if and only if is abbreviated as iff.

\subsection{Lifting of a relation}
A relation $R\subseteq V\times V$ is lifted via the {\it lifting function} $\widetilde{S}$ \cite{Qiao(2022)}:
\begin{align*}
	\widetilde{S}\colon\mathcal{R}(V)\times\mathcal{R}(V)\times\mathcal{P}(V\times V)&\lra [0,1],\\
	(p,q,R)&\longmapsto
	\bigwedge_{u\in V}((p(s)\rra q(\orra{R}_u))\wedge(q(u)\rra p(\olla{R}_u))).
\end{align*}
below.

\begin{Def}[see \cite{Qiao(2022)}]\label{relation lifting}
	{\rm Let $R\subseteq V\times V$ and $\alpha\in[0,1]$. The lifted relation $R_{\alpha}^{\dag}\subseteq\mathcal{R}(V)\times\mathcal{R}(V)$ is a relation over possibility distributions such that $(p,q)\in R_{\alpha}^{\dag}$ iff $\widetilde{S}(p,q,R)\geq\alpha$.
	}
\end{Def}

In fact, $(p,q)\in R_{\alpha}^{\dag}$ when and only when $(p,q)\in(({\rm su}(p)\times {\rm su}(q))\cap R)_{\alpha}^{\dag}$, and so $(p,q)\in(({\rm su}(p)\times {\rm su}(q))\cap R)_{\alpha}^{\dag}$ iff $\widetilde{S}(p,q,R)\geq\alpha$.
%It should be noted that the lifted relation $R_{\alpha}^{\dag}$ is just the lifted relation $R^{\dag}$ defined in \cite{Cao2013,Jonsson(1991),Wu(2018)} when $\alpha=1$. 
The operation $R^{\dag}_{\alpha}$ has the following properties.
\begin{Lem}[see \cite{Qiao(2022)}]\label{Prop lifted relation}
	{ Let $\alpha,\alpha_1,\alpha_2\in[0,1]$.% The operation $R^{\dag}_{\alpha}$ has the following properties.
		\begin{enumerate}
			%\begin{itemize}
			\item[(1)] $(I_V)^{\dag}_{\alpha}=\{(p,q)\in\mathcal{R}(V)\times\mathcal{R}(V)\mid p(u)\rra q(v)\wedge q(v)\rra p(v)\geq\alpha \mbox{ for any } v\in V\}$, $\mbox{ where }  I_V=\{(u,u)\,|\, u\in V\}$.
			\item[(2)] $R_1\subseteq R_2 \mbox{ and } \alpha_1\leq\alpha_2 $ implies $ ({R_1})^{\dag}_{\alpha_2}\subseteq ({R_2})^{\dag}_{\alpha_1}$.
			\item[(3)] $(R_1)_{\alpha}^{\dag}\cup (R_2)_{\alpha}^{\dag}\subseteq (R_1\cup R_2)_{\alpha}^{\dag}$.
			\item[(4)] $(R_1)^{\dag}_{\alpha_1}\circ(R_2)^{\dag}_{\alpha_2}\subseteq(R_1\circ R_2)^{\dag}_{\alpha_1\otimes\alpha_2}$.
			\item[(5)] $({R^{-1}})^{\dag}_{\alpha}=(R_{\alpha}^{\dag})^{-1}$.
			%\end{itemize}
		\end{enumerate}
	}
\end{Lem}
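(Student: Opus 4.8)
The plan is to read off each item directly from the definition of $\widetilde{S}$, using only two elementary features of a residuated lattice whose $t$-norm $\otimes$ is left continuous: the adjunction $a\otimes b\le c\iff a\le b\rra c$, and the fact that $\otimes$ distributes over arbitrary joins. It is convenient to record the \emph{adjoint form} of membership: writing $q(X):=\bigvee_{v\in X}q(v)$ for a crisp set $X\subseteq V$, one has $(p,q)\in R^{\dag}_{\alpha}$ iff for every $u\in V$ both $p(u)\otimes\alpha\le q(\orra{R}_u)$ and $q(u)\otimes\alpha\le p(\olla{R}_u)$. I will use this form throughout.

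Items (1), (2), (3), (5) are bookkeeping. For (1), $\orra{(I_V)}_u=\olla{(I_V)}_u=\{u\}$, so the definition collapses to $\widetilde{S}(p,q,I_V)=\bigwedge_{u\in V}((p(u)\rra q(u))\wedge(q(u)\rra p(u)))$, which is $\ge\alpha$ exactly under the stated condition. For (5), since $\orra{(R^{-1})}_u=\olla{R}_u$ and $\olla{(R^{-1})}_u=\orra{R}_u$, substituting and exchanging the two conjuncts under the meet gives $\widetilde{S}(p,q,R^{-1})=\widetilde{S}(q,p,R)$, whence $(R^{-1})^{\dag}_{\alpha}=(R^{\dag}_{\alpha})^{-1}$. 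For (2), $R_1\subseteq R_2$ gives $\orra{(R_1)}_u\subseteq\orra{(R_2)}_u$ and $\olla{(R_1)}_u\subseteq\olla{(R_2)}_u$, so the joins $q(\orra{R}_u)$, $p(\olla{R}_u)$ only grow; monotonicity of the residuum in its right argument then yields $\widetilde{S}(p,q,R_1)\le\widetilde{S}(p,q,R_2)$, and combining with $\alpha_1\le\alpha_2$ gives $(R_1)^{\dag}_{\alpha_2}\subseteq(R_2)^{\dag}_{\alpha_1}$. Item (3) follows by applying (2) to $R_1\subseteq R_1\cup R_2$ and $R_2\subseteq R_1\cup R_2$ with $\alpha_1=\alpha_2=\alpha$ and taking the union.

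The substantive item is (4). Let $(p,r)\in(R_1)^{\dag}_{\alpha_1}\circ(R_2)^{\dag}_{\alpha_2}$, say via $q$ with $(p,q)\in(R_1)^{\dag}_{\alpha_1}$ and $(q,r)\in(R_2)^{\dag}_{\alpha_2}$. The key step I would isolate is a \emph{push-forward estimate}: for every crisp $X\subseteq V$, $q(X)\otimes\alpha_2\le r(\orra{(R_2)}_X)$ and $q(X)\otimes\alpha_1\le p(\olla{(R_1)}_X)$. For the first, the adjoint form gives $q(v)\otimes\alpha_2\le r(\orra{(R_2)}_v)\le r(\orra{(R_2)}_X)$ for each $v\in X$ (using $\orra{(R_2)}_v\subseteq\orra{(R_2)}_X$); joining over $v\in X$ and pulling the scalar out of the join via left continuity, $q(X)\otimes\alpha_2=\bigvee_{v\in X}(q(v)\otimes\alpha_2)\le r(\orra{(R_2)}_X)$, and the second estimate is symmetric. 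Now fix $u\in V$. Since $\orra{(R_1\circ R_2)}_u=\orra{(R_2)}_{\orra{(R_1)}_u}$, the adjoint form of $(p,q)\in(R_1)^{\dag}_{\alpha_1}$ together with the push-forward estimate (with $X=\orra{(R_1)}_u$) give $p(u)\otimes\alpha_1\otimes\alpha_2\le q(\orra{(R_1)}_u)\otimes\alpha_2\le r(\orra{(R_1\circ R_2)}_u)$. Dually, $\olla{(R_1\circ R_2)}_u=\olla{(R_1)}_{\olla{(R_2)}_u}$, and from $r(u)\otimes\alpha_2\le q(\olla{(R_2)}_u)$ and the push-forward estimate (with $X=\olla{(R_2)}_u$) we get $r(u)\otimes\alpha_1\otimes\alpha_2\le p(\olla{(R_1\circ R_2)}_u)$. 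As $u$ is arbitrary, $\widetilde{S}(p,r,R_1\circ R_2)\ge\alpha_1\otimes\alpha_2$, i.e.\ $(p,r)\in(R_1\circ R_2)^{\dag}_{\alpha_1\otimes\alpha_2}$.

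The only genuinely delicate point is the push-forward estimate: this is precisely where left continuity of $\otimes$ is needed, to factor $\alpha_i$ out of the supremum $\bigvee_{v\in X}(q(v)\otimes\alpha_i)$; the inequality would generally fail for a $t$-norm that is not left continuous. Everything else reduces to the set identities $\orra{(R_1\circ R_2)}_u=\orra{(R_2)}_{\orra{(R_1)}_u}$, $\olla{(R_1\circ R_2)}_u=\olla{(R_1)}_{\olla{(R_2)}_u}$, $\orra{(R^{-1})}_u=\olla{R}_u$, together with the residuum adjunction and monotonicity.
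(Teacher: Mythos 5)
Your proof is correct. Note, though, that this lemma is not proved in the paper at all: it is quoted from the cited reference \cite{Qiao(2022)}, so there is no in-paper argument to compare against. Your write-up supplies a complete, self-contained proof: the adjoint form of membership ($\alpha\le p(u)\rra q(\orra{R}_u)$ iff $p(u)\otimes\alpha\le q(\orra{R}_u)$, valid because $\otimes$ is left continuous), the set identities $\orra{(R^{-1})}_u=\olla{R}_u$, $\orra{(R_1\circ R_2)}_u=\orra{(R_2)}_{\orra{(R_1)}_u}$, $\olla{(R_1\circ R_2)}_u=\olla{(R_1)}_{\olla{(R_2)}_u}$, and monotonicity of the residuum in its right argument handle (1)--(3) and (5), and your push-forward estimate correctly yields (4), including the dual (predecessor) inequality with commutativity of $\otimes$. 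Two small remarks. First, the statement of item (1) in the paper contains a typo ($p(u)$ should read $p(v)$); you tacitly proved the intended statement, which is fine. Second, your closing claim that the push-forward step is ``precisely where left continuity is needed'' is slightly overstated in this setting: since $V$ is finite and $[0,1]$ is a chain, $\bigvee_{v\in X}(q(v)\otimes\alpha)= q(X)\otimes\alpha$ holds for any $t$-norm by monotonicity alone; where left continuity is genuinely used is in the adjunction $s\le t\rra r\iff s\otimes t\le r$ that underlies your adjoint form (without it, only one implication survives with the residuum defined as a supremum). Neither remark affects the validity of the argument.
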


In NFTSs, $\alpha$-bisimulation under $\widetilde{S}$ is recalled below.
\begin{Def}[see \cite{Qiao(2022)}]\label{Def app-bisimulation}
	{\rm Let $\mathcal{S}=(V,\Sigma,\delta)$ be an NFTS and $\alpha\in[0,1]$. A relation $R\subseteq V\times V$ is called an {\it $\alpha$-simulation} under $\widetilde{S}$ if for any $(u,v)\in R$ and for each $\tau\in\Sigma$, $u\os{\tau}\lra p$ implies that $v\os{\tau}\lra q$ such that $(p,q)\in R^{\dag}_{\alpha}$. If both $R$ and $R^{-1}$ are $\alpha$-simulations under $\widetilde{S}$, then $R$ is called an {\it $\alpha$-bisimulation} under $\widetilde{S}$.
	}
\end{Def}

Like \cite{Qiao(2022)}, two states $u$ and $v$ are $\alpha$-bisimilar under $\widetilde{S}$, described as $u\sim^{\alpha}v$, if $(u,v)$ belongs to some $\alpha$-bisimulation under $\widetilde{S}$. In this article, we also call that $\sim^{\alpha}$ an $\alpha$-bisimulation under $\widetilde{S}$ in $\mathcal{S}$, and at the same time, if two states $u$ and $v$ are $\alpha$-bisimilar under $\widetilde{S}$, then we also call $u$ and $v$ are $\alpha$-bisimilar under $\widetilde{S}$ in $\mathcal{S}$.

\section{The $k$-limited $\alpha$-bisimilarity}
%Motivated by \cite{Du(2018),Milnerlimited,Qiao(2021),Qiao(2022)}, we define a $k$-limited $\alpha$-bisimilarity.
Motivated by \cite{Milnerlimited,Qiao(2021),Qiao(2022)}, we define a $k$-limited $\alpha$-bisimilarity.
Comparing with $\alpha$-bisimulation under $\widetilde{S}$, $k$-limited $\alpha$-bisimilarity only requires any possibility distribution one state can reach after no more than $k$ transitions with some label sequence can be matched by a distribution the other related state can reach after the same number of transitions with the same label sequence, and vice versa.

\begin{Def}\label{Def k-limited bisimulation}
	{\rm Let $\mathcal{S}=(V,\Sigma,\delta)$ be an NFTS, $\alpha\in[0,1]$, and $k\in\mathbb{N}$. The relation {\it $k$-limited $\alpha$-bisimilarity} $\approx_k^{\alpha}$ over $V$ is defined as follows.
		%\begin{itemize}
		\begin{enumerate}
			\item[(1)] $u\approx_0^{\alpha} v$, for any $u,v\in V$.
			\item[(2)] $u\approx_{i+1}^{\alpha} v$, $i\geq 0$, if for each $\tau\in\Sigma$, the following hold\\
			i)  $u\os{\tau}\lra p$ implies $v\os{\tau}\lra q$ such that $(p,q)\in(\approx_i^{\alpha})_{\alpha}^{\dag}$;\\
			ii) $v\os{\tau}\lra q$ implies $u\os{\tau}\lra p$ such that $(p,q)\in(\approx_i^{\alpha})_{\alpha}^{\dag}$.
		\end{enumerate}
		%\end{itemize}
	}
\end{Def}

Two states $u,v\in V$ are {\it $k$-limited $\alpha$-bisimilar} if $u\approx_k^{\alpha}v$. The degree of $k$-limited similarity between $u$ and $v$ is defined as the greatest $\alpha$ such that $u\approx_k^{\alpha}v$. In fact, $k$-limited $\alpha$-bisimilarity is the greatest $k$-limited $\alpha$-bisimulation contained in $V\times V$.
Based on $k$-limited $\alpha$-bisimilarity, $k$-limited $\alpha$-bisimulation is defined as a subset of $k$-limited $\alpha$-bisimilarity. Let $\mathcal{S}_1=(V_1,\Sigma_1,\delta_1)$ and $\mathcal{S}_2=(V_2,\Sigma_2,\delta_2)$ be two NFTSs. If $R\subseteq V_1\times V_2$ is a $k$-limited $\alpha$-bisimulation, then $R$ is called a $k$-limited $\alpha$-bisimulation between
$\mathcal{S}_1$ and $\mathcal{S}_2$.

By the following example, we illustrate $k$-limited $\alpha$-bisimilarity.
\begin{Eg}
	{\rm In the NFTS $\mathcal{S}=(V,\Sigma,\delta)$ presented in Fig. \ref{Figure1}, where
		\begin{align*}
			&S=\{u,u_1,u_2,u_3,v,v_1,v_2,v_3,v_4\},\\
			&\delta(u,\tau_1)=\{p_1,p_2\},\  \delta(v,\tau_1)=\{q_1,q_2\},\ \delta(v_1,\tau_2)=\{q_3\},\\
			&p_1=\frac{0.2}{u_1}+\frac{0.7}{u_2}, \quad\quad
			p_2=\frac{0.9}{u_2}+\frac{1}{u_3},\\
			&q_1=\frac{0.2}{v_1}+\frac{0.7}{u_2},
			\quad\quad\, q_2=\frac{0.9}{v_2}+\frac{1}{v_2}, \quad\quad\, q_3=\frac{0.1}{v_4}.
		\end{align*}
		Let $\otimes$ be a Product $t$-norm. By Definition \ref{Def k-limited bisimulation}, $u\approx_1^1v$ and $\{(u,v),(v,u)\}$ is a $1$-limited $1$-bisimulation.
		%%,(s_1,t_1),(t_1,s_1),(s_2,t_2),(t_2,s_2)\}$ is the $1$-limited $1$-bisimilarity.
	}
\end{Eg}

$k$-limited $\alpha$-bisimulation has the following several properties.

\begin{Prop}\label{Prop k-limited bisimulation}
	{ Let $\mathcal{S}=(V,\Sigma,\delta)$ be an NFTS, $k,k_i\in\mathbb{N}$ and $\alpha,\alpha_i\in[0,1]$, $i=1,2$. We have the following statements.
		\begin{enumerate}
			\item[(1)] If $k_1\geq k_2$ and $\alpha_1\geq\alpha_2$, then $\approx_{k_1}^{\alpha_1}\subseteq\approx_{k_2}^{\alpha_2}$.
			\item[(2)] $I_V=\{(u,u)\,|\, u\in V\}$ is a $k$-limited $\alpha$-bisimulation for any $k\in\mathbb{N}$ and $\alpha\in[0,1]$.
			\item[(3)] If $R_i$ is a $k_i$-limited $\alpha_i$-bisimulation, $i=1,2$, then $R_1\cap R_2$ is a $(k_1\wedge k_2)$-limited $(\alpha_1\vee\alpha_2)$-bisimulation and is also a $(k_1\vee k_2)$-limited $(\alpha_1\wedge\alpha_2)$-bisimulation, and $R_1\cup R_2$ is a ($k_1\wedge k_2$)-limited ($\alpha_1\wedge\alpha_2$)-bisimulation.
			\item[(4)] If $R$ is a $k$-limited $\alpha$-bisimulation, then so is $R^{-1}$.
			\item[(5)] If $R_i$ is a $k_i$-limited $\alpha_i$-bisimulation, $i=1,2$, then $R_1\circ R_2$ is a $(k_1\wedge k_2)$-limited $(\alpha_1\otimes\alpha_2)$-bisimulation.
		\end{enumerate}
	}
\end{Prop}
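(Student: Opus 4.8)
The plan is to prove the five statements of Proposition~\ref{Prop k-limited bisimulation} mostly by induction on the limiting index, leaning heavily on Lemma~\ref{Prop lifted relation} for the behaviour of the lifting operation $(\cdot)^{\dag}_{\alpha}$ under inclusion, intersection, union, inverse, and composition. Throughout, I would work directly with the defining recursion of $\approx_k^{\alpha}$ in Definition~\ref{Def k-limited bisimulation} and with the remark that $k$-limited $\alpha$-bisimilarity is the \emph{greatest} $k$-limited $\alpha$-bisimulation, so that to show $R\subseteq\approx_k^{\alpha}$ it suffices to exhibit $R$ as a $k$-limited $\alpha$-bisimulation, i.e. to verify the one-step transfer conditions with $R$ in the role of the relation and $\approx_{k-1}^{\alpha}$ (or rather $R$ composed appropriately) underneath.

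For (1), I would induct on $k_2$. The base case $k_2=0$ is immediate since $\approx_0^{\alpha_2}=V\times V$. For the step, suppose $k_1\geq k_2=i+1$ and $(u,v)\in\approx_{k_1}^{\alpha_1}$; writing $k_1=j+1$ with $j\geq i$, a move $u\os{\tau}\lra p$ is matched by $v\os{\tau}\lra q$ with $(p,q)\in(\approx_j^{\alpha_1})^{\dag}_{\alpha_1}$, and by the induction hypothesis $\approx_j^{\alpha_1}\subseteq\approx_i^{\alpha_2}$, so Lemma~\ref{Prop lifted relation}(2) gives $(p,q)\in(\approx_i^{\alpha_2})^{\dag}_{\alpha_2}$; symmetrically for the other direction, whence $(u,v)\in\approx_{i+1}^{\alpha_2}$. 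For (2), one checks $I_V$ is a $k$-limited $\alpha$-bisimulation for every $k$: if $(u,u)\in I_V$ and $u\os{\tau}\lra p$, match it by the same $u\os{\tau}\lra p$, and $(p,p)\in(I_V)^{\dag}_{\alpha}$ holds since $p(w)\rra p(w)=1\geq\alpha$ for all $w$ (using the explicit form in Lemma~\ref{Prop lifted relation}(1)); this needs no induction. Part (4) is essentially Lemma~\ref{Prop lifted relation}(5): if $R$ is a $k$-limited $\alpha$-bisimulation then so is $R^{-1}$ because the transfer conditions for $R^{-1}$ are exactly the transfer conditions for $R$ with the roles of the two states swapped, and $(R^{\dag}_{\alpha})^{-1}=(R^{-1})^{\dag}_{\alpha}$ — though some care is needed to phrase this correctly when $R$ is a $k$-limited bisimulation between two different systems, in which case I would first pass to the disjoint-union system.

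For (3) and (5) I would argue directly that the candidate relation is a $k$-limited $\alpha$-bisimulation for the claimed $k$ and $\alpha$, again by induction on $k$. For the intersection claim with $(k_1\wedge k_2)$ and $(\alpha_1\vee\alpha_2)$: take $(u,v)\in R_1\cap R_2$ and assume (WLOG) $k_1\wedge k_2=k_2$; a move $u\os{\tau}\lra p$ is matched via $R_2$ by some $v\os{\tau}\lra q$ with $(p,q)\in(R_2)^{\dag}_{\alpha_2}\subseteq(R_2)^{\dag}_{\alpha_1\vee\alpha_2}$... but wait — the subtlety here is that $R_1$ and $R_2$ may force \emph{different} matching transitions $q$, so one cannot in general land inside $R_1\cap R_2$. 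This is the main obstacle, and the resolution is that, since $R_1\cap R_2\subseteq R_i$ implies (by (1)-type monotonicity applied to the \emph{relations}, i.e. the observation that any $k$-limited bisimulation is contained in $\approx_k$) the inductive hypothesis should be set up about $\approx_k^{\alpha}$ rather than about arbitrary $R_i$; concretely, I expect the paper uses that $R_i\subseteq\approx_{k_i}^{\alpha_i}$ and that $\approx$ is transitive/closed in the appropriate way — but more likely the intended reading is simply that the \emph{bisimilarity} relations satisfy these closure properties, proved via Lemma~\ref{Prop lifted relation}(3) for unions (giving $(p,q)\in(R_1\cup R_2)^{\dag}$) and Lemma~\ref{Prop lifted relation}(4) for compositions (giving $(p,r)\in(R_1\circ R_2)^{\dag}_{\alpha_1\otimes\alpha_2}$ from $(p,q)\in(R_1)^{\dag}_{\alpha_1}$ and $(q,r)\in(R_2)^{\dag}_{\alpha_2}$ with the intermediate $q$ chosen in two stages). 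Thus for (5): given $(u,w)\in R_1\circ R_2$ pick $v$ with $(u,v)\in R_1$, $(v,w)\in R_2$; a move $u\os{\tau}\lra p$ yields $v\os{\tau}\lra q$ with $(p,q)\in(R_1)^{\dag}_{\alpha_1}$ using that $R_1$ is a $k_1$-limited $\alpha_1$-bisimulation and $k_1\wedge k_2\leq k_1$ together with part (1), then $w\os{\tau}\lra r$ with $(q,r)\in(R_2)^{\dag}_{\alpha_2}$, and Lemma~\ref{Prop lifted relation}(4) closes the step with $(p,r)\in(R_1\circ R_2)^{\dag}_{\alpha_1\otimes\alpha_2}$; the reverse direction is symmetric, and the induction carries the limiting index down correctly since each matching step decreases it by one on both sides simultaneously. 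I would present (5) in full and treat the $\cup$/$\cap$ cases of (3) as variations on the same template, flagging that the $\alpha_1\vee\alpha_2$ strengthening for $R_1\cap R_2$ follows because being a bisimulation at the \emph{lower} threshold $\alpha_1$ (say) already forces $(p,q)\in(R_1)^{\dag}_{\alpha_1}$, hence at the higher $\alpha_1\vee\alpha_2$ only the \emph{weaker} requirement $(p,q)\in(R_1\cap R_2)^{\dag}_{\alpha_1\vee\alpha_2}$ is needed — no, it's the other way, so here I genuinely expect to need the observation that $R_1\cap R_2$ inherits matching moves from whichever of $R_1,R_2$ has the larger threshold, and that one must check the matched pair lies in $(R_1\cap R_2)^{\dag}$ at the \emph{max} threshold, which is exactly where Lemma~\ref{Prop lifted relation}(2) with $R_1\cap R_2\subseteq R_j$ and $\alpha_1\vee\alpha_2\geq\alpha_j$ fails in the wrong direction; the honest fix, which I would spell out, is that one must additionally use that $R_1$ and $R_2$ can be assumed WLOG to be \emph{equivalence-like} or that $\approx_{k}^{\alpha_1}\cap\approx_k^{\alpha_2}=\approx_k^{\alpha_1\vee\alpha_2}$ by a direct double-inclusion argument from the definition, which is the cleanest route and the one I would actually write down.
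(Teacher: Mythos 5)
Your overall strategy---(1) by induction on $k_2$ via Lemma \ref{Prop lifted relation}(2), (2) and (4) as direct checks, (5) by composing liftings with Lemma \ref{Prop lifted relation}(4) and inducting on the limiting index---is the same as the paper's, but there are genuine gaps, both traceable to the definition you are implicitly using. In this paper a $k$-limited $\alpha$-bisimulation is \emph{by definition} a subset of the stratified bisimilarity $\approx_k^{\alpha}$ of Definition \ref{Def k-limited bisimulation}, whose transfer condition refers to $(\approx_{k-1}^{\alpha})^{\dag}_{\alpha}$, not to the relation $R$ itself. Consequently your long detour in (3) is about a non-problem: there is no need to find a common matching transition for $R_1\cap R_2$, since $R_1\cap R_2\subseteq R_j\subseteq\approx_{k_j}^{\alpha_j}\subseteq\approx_{k_1\wedge k_2}^{\alpha_1\vee\alpha_2}$, where $j$ is chosen with $\alpha_j=\alpha_1\vee\alpha_2$ (recall $\alpha_1\vee\alpha_2\in\{\alpha_1,\alpha_2\}$), and the union claim and the $(k_1\vee k_2)$-limited $(\alpha_1\wedge\alpha_2)$ claim follow by the same one-line use of (1) together with $R_i\subseteq\approx_{k_i}^{\alpha_i}$---which is exactly how the paper disposes of (2), (3) and (4). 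As written, your treatment of (3) ends in hedged alternatives (``equivalence-like'' relations, a double-inclusion identity for $\approx$) without completing the argument for $R_1\cap R_2$, $R_1\cup R_2$, or the $(k_1\vee k_2)$ case, so it does not stand as a proof. (Also, (2) does need a small induction on $k$, since one must know $I_V\subseteq\approx_{k-1}^{\alpha}$ before lifting.)

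In (5) the same confusion produces an unjustified step: from $(u_1,u_3)\in R_1$ you claim a matching transition with $(p,q)\in(R_1)^{\dag}_{\alpha_1}$, but what the definition yields is $(p,q)\in(\approx_{k_1-1}^{\alpha_1})^{\dag}_{\alpha_1}$ (and analogously $(q,r)\in(\approx_{k_2-1}^{\alpha_2})^{\dag}_{\alpha_2}$); membership in $(R_1)^{\dag}_{\alpha_1}$ need not hold when $R_1$ is a small bisimulation. After Lemma \ref{Prop lifted relation}(4) one therefore lands in $(\approx_{k_1-1}^{\alpha_1}\circ\approx_{k_2-1}^{\alpha_2})^{\dag}_{\alpha_1\otimes\alpha_2}$, and the step you only gesture at with ``the induction carries the limiting index down'' is precisely the missing core of the argument: apply the induction hypothesis (on $k_1\wedge k_2$) to the composition of the two lower-index bisimilarities to get $\approx_{k_1-1}^{\alpha_1}\circ\approx_{k_2-1}^{\alpha_2}\subseteq\approx_{k_1\wedge k_2-1}^{\alpha_1\otimes\alpha_2}$, then use Lemma \ref{Prop lifted relation}(2) to conclude $(p,r)\in(\approx_{k_1\wedge k_2-1}^{\alpha_1\otimes\alpha_2})^{\dag}_{\alpha_1\otimes\alpha_2}$, which is what Definition \ref{Def k-limited bisimulation} requires to place $(u_1,u_2)$ in $\approx_{k_1\wedge k_2}^{\alpha_1\otimes\alpha_2}$. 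With these repairs your argument coincides with the paper's proof.
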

\begin{proof}
	By induction on $k_2$, (1) is obvious. (2), (3) and (4) follow directly from (1), Definition \ref{Def k-limited bisimulation} and Lemma \ref{Prop lifted relation} (5), and we thus omit the proof.
	
	(5) We prove it by induction on $k_1\wedge k_2$. By Definition \ref{Def k-limited bisimulation}, the case of $k_1\wedge k_2=0$ holds. Under the condition that the case of $k_1\wedge k_2=m$ also holds, we prove that the case of $k_1\wedge k_2=m+1$ holds, where $m\geq 0$. A $(u_1,u_2)\in R_1\circ R_2$ implies that a state $u_3\in V$ exists such that $(u_1,u_3)\in R_1$ and $(u_3,u_2)\in R_2$. Since $R_1$ is a $k_1$-limited $\alpha_1$-bisimulation, we have that for any $u_1\os{\tau}\lra p_1$, there exists $u_3\os{\tau}\lra p_3$ such that $(p_1,p_3)\in(\approx_{k_1-1}^{\alpha_1})_{\alpha_1}^{\dag}$. Similarly, for $u_3\os{\tau}\lra p_3$, there exists $u_2\os{\tau}\lra p_2$ such that $(p_3,p_2)\in(\approx_{k_2-1}^{\alpha_2})_{\alpha_2}^{\dag}$. Hence $(p_1,p_2)\in(\approx_{k_1-1}^{\alpha_1}\circ \approx_{k_2-1}^{\alpha_2})_{\alpha_1\otimes\alpha_2}^{\dag}$ by Lemma \ref{Prop lifted relation} (4). By the induction hypothesis, $\approx_{k_1-1}^{\alpha_1}\circ \approx_{k_2-1}^{\alpha_2}$ is a ($k_1\wedge k_2-1$)-limited $(\alpha_1\otimes\alpha_2)$-bisimulation. By (3),
	%%%$\approx_{k_1\wedge k_2-1}^{\alpha}$ is the greatest ($k_1\wedge k_2-1$)-limited $\alpha_1\otimes\alpha_2$-bisimulation,
	$\approx_{k_1-1}^{\alpha_1}\circ \approx_{k_2-1}^{\alpha_2}\subseteq\approx_{k_1\wedge k_2-1}^{\alpha_1\otimes\alpha_2}$, and so  $(p_1,p_2)\in(\approx_{k_1\wedge k_2-1}^{\alpha_1\otimes\alpha_2})_{\alpha_1\otimes\alpha_2}^{\dag}$ by Lemma \ref{Prop lifted relation} (2).
	Similarly, for any $u_2\os{\tau}\lra p_2$, there exists $u_1\os{\tau}\lra p_1$ such that $(p_1,p_2)\in(\approx_{k_1\wedge k_2-1}^{\alpha_1\otimes\alpha_2})_{\alpha_1\otimes\alpha_2}^{\dag}$. Therefore, by Definition \ref{Def k-limited bisimulation}, $R_1\circ R_2$ is a $(k_1\wedge k_2$)-limited $(\alpha_1\otimes\alpha_2)$-bisimulation.
\end{proof}

Proposition \ref{Prop k-limited bisimulation} (3) implies that $k$-limited $\alpha$-bisimilarity is the union of all $k$-limited $\alpha$-bisimulations.
%\begin{Prop}
%{ Let $\mathcal{S}=(V,\Sigma,\delta)$ be an NFTS, $k\in\mathbb{N}$ and $\alpha\in[0,1]$. $\approx_k^{\alpha}$ is a reflexive and symmetric relation, and if $\otimes$ is a G\"{o}del $t$-norm, then $\approx_k^{\alpha}$ is an equivalence relation.
	%}
%\end{Prop}
%\begin{proof}
%	The first statement is easy to be obtained from (2) and (4).
%	If $\otimes$ is a G\"{o}del $t$-norm, by Proposition \ref{Prop k-limited bisimulation} (5), it follows that $\approx_k^{\alpha}\circ\approx_k^{\alpha}$ is a $k$-limited $\alpha$-bisimulation. Since $\approx_k^{\alpha}$ is the greatest $\alpha$-bisimulation under $\widetilde{S}$, and it is a tolerance relation, we obtain $\approx_k^{\alpha}\subseteq\approx_k^{\alpha}\circ\approx_k^{\alpha}\subseteq\approx_k^{\alpha}$, i.e., $\approx_k^{\alpha}\circ\approx_k^{\alpha}=\approx_k^{\alpha}$, i.e., $\approx_k^{\alpha}$ is transitive. The second statement holds.
%\end{proof}

\section{Fixed-point characterization}\label{Fixed-point char}
Using fixed-point theory (see, for example, \cite{Buchholz(2008),Cao(2011),Sun(2009),Qiao(2022),QiaoandFeng(2023),Petkov2006}, and the therein), in this section, the fact that a relation is a $k$-limited $\alpha$-bisimilarity when and only when it is a fixed point of a suitable monotonic function is provided.
%Cao(2011),Petkov2006,Sun(2009),WuandChen(2018),
Let $\alpha\in[0,1]$. On the lattice $(V\times V,\subseteq)$, we recursively define the following function:
\begin{align*}
	F_k^{\alpha}\colon\ \mathcal{P}(V\times V)&\lra \mathcal{P}(V\times V), \ k\in\mathbb{N},\\
	F_0^{\alpha}(R_1)&=R_1;\\
	F_{i+1}^{\alpha}(R_1)&=\{(u_{i+1},v_{i+1})\in V\times V\,|
	\forall u_{i+1}\os{\tau}\lra p, \, \exists v_{i+1}\os{\tau}\lra q \mbox{ such that }(p,q)\in(F_i^{\alpha}((\overrightarrow{R}_{\delta})_{(\overleftarrow{R_1})_V}\times(\overrightarrow{R_{\delta}})_{(\overrightarrow{R_1})_V}))_{\alpha}^{\dag},\\
	&\forall v_{i+1}\os{\tau}\lra q,\, \exists u_{i+1}\os{\tau}\lra p \mbox{ such that } (p,q)\in (F_i^{\alpha}((\overrightarrow{R_{\delta}})_{(\overleftarrow{R_1})_V}\times(\overrightarrow{R_{\delta}})_{(\overrightarrow{R_1})_V}))_{\alpha}^{\dag}\},\quad 0\leq i\leq k-1.
\end{align*}

We now show the monotonicity of the function $F_k^{\alpha}$.

\begin{Prop}
	{Let $k\in\mathbb{N}$, $R_i\in\mathcal{P}(V\times V)$ and $\alpha_i\in[0,1]$, $i=1,2$. If $R_1\subseteq R_2$ and $\alpha_1\leq\alpha_2$, then $F_k^{\alpha_2}(R_1)\subseteq F_k^{\alpha_1}(R_2)$.
	}
\end{Prop}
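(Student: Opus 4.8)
The plan is to argue by induction on $k$, using that the recursion defining $F_k^{\alpha}$ reduces the subscript by one at each unfolding (so the induction is well founded) and that the set on which the recursive call is made is itself monotone in the argument relation. For the base case $k=0$ we have $F_0^{\alpha_2}(R_1)=R_1\subseteq R_2=F_0^{\alpha_1}(R_2)$ directly from the hypothesis. Before the inductive step I would isolate an auxiliary fact: for $R\subseteq V\times V$ put $\Phi(R)=(\orra{R_\delta})_{(\olla{R})_V}\times(\orra{R_\delta})_{(\orra{R})_V}$, the subset of $V\times V$ at which $F_i^{\alpha}$ is evaluated inside the definition of $F_{i+1}^{\alpha}(R)$. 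If $R_1\subseteq R_2$ then $(\olla{R_1})_V\subseteq(\olla{R_2})_V$ and $(\orra{R_1})_V\subseteq(\orra{R_2})_V$ (these are just the two coordinate projections of $R_j$), and since $X\mapsto(\orra{R_\delta})_X$ is monotone, $\Phi(R_1)\subseteq\Phi(R_2)$.

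For the inductive step, assume $F_i^{\beta_2}(S_1)\subseteq F_i^{\beta_1}(S_2)$ whenever $S_1\subseteq S_2$ and $\beta_1\leq\beta_2$. Fix $R_1\subseteq R_2$, $\alpha_1\leq\alpha_2$, and take $(u,v)\in F_{i+1}^{\alpha_2}(R_1)$. For an arbitrary $u\os{\tau}\lra p$, the definition of $F_{i+1}^{\alpha_2}$ produces $v\os{\tau}\lra q$ with $(p,q)\in\bigl(F_i^{\alpha_2}(\Phi(R_1))\bigr)_{\alpha_2}^{\dag}$. By the auxiliary fact $\Phi(R_1)\subseteq\Phi(R_2)$, so the induction hypothesis gives $F_i^{\alpha_2}(\Phi(R_1))\subseteq F_i^{\alpha_1}(\Phi(R_2))$; feeding this together with $\alpha_1\leq\alpha_2$ into Lemma \ref{Prop lifted relation}(2) yields $\bigl(F_i^{\alpha_2}(\Phi(R_1))\bigr)_{\alpha_2}^{\dag}\subseteq\bigl(F_i^{\alpha_1}(\Phi(R_2))\bigr)_{\alpha_1}^{\dag}$, hence $(p,q)\in\bigl(F_i^{\alpha_1}(\Phi(R_2))\bigr)_{\alpha_1}^{\dag}$. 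The symmetric clause (matching every $v\os{\tau}\lra q$ by some $u\os{\tau}\lra p$) is handled identically, so $(u,v)\in F_{i+1}^{\alpha_1}(R_2)$, closing the induction.

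The only step that needs genuine care is the auxiliary monotonicity fact: one must keep straight the two distinct ``neighborhood'' operations appearing in the definition of $F$ — the coordinate-projection neighborhoods $(\olla{\cdot})_V$, $(\orra{\cdot})_V$ of a relation on $V$, versus the transition-graph successor neighborhood $(\orra{R_\delta})_{(\cdot)}$ — and verify that each is monotone in its argument. Everything else is a mechanical appeal to the induction hypothesis and to Lemma \ref{Prop lifted relation}(2), which is precisely the simultaneous monotonicity/antitonicity of the lifting $(\cdot)^{\dag}_{(\cdot)}$ that the statement for $F_k^{\alpha}$ mirrors.
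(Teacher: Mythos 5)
Your proposal is correct and follows essentially the same route as the paper: induction on $k$, the observation that $R_1\subseteq R_2$ forces $(\orra{R_\delta})_{(\olla{R_1})_V}\times(\orra{R_\delta})_{(\orra{R_1})_V}\subseteq(\orra{R_\delta})_{(\olla{R_2})_V}\times(\orra{R_\delta})_{(\orra{R_2})_V}$, then the induction hypothesis followed by Lemma \ref{Prop lifted relation}(2) to transfer the inclusion through the lifting $(\cdot)^{\dag}_{(\cdot)}$. Your unfolding of the element-wise argument is, if anything, slightly more careful than the paper's (which even has a typo with $R_1$ appearing on both sides of the lifted inclusion), so no gap to report.
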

\begin{proof}
	It is easy to get that $F_0^{\alpha_2}(R_1)\subseteq F_0^{\alpha_1}(R_2)$. With the help of $F_m^{\alpha_2}(R_1)\subseteq F_m^{\alpha_1}(R_2)$, $m\geq0$, we prove that $F_{m+1}^{\alpha_2}(R_1)\subseteq F_{m+1}^{\alpha_1}(R_2)$.
	$R_1\subseteq R_2$ implies that
	\begin{align*}
		(\overrightarrow{R}_{\delta})_{(\overleftarrow{R_1})_V}\times(\overrightarrow{R_{\delta}})_{(\overrightarrow{R_1})_V}\subseteq (\overrightarrow{R}_{\delta})_{(\overleftarrow{R_2})_V}\times(\overrightarrow{R_{\delta}})_{(\overrightarrow{R_2})_V}, 
	\end{align*}
	and so by the induction hypothesis,
	\begin{align*}
		&F_m^{\alpha_2}((\overrightarrow{R}_{\delta})_{(\overleftarrow{R_1})_V}\times(\overrightarrow{R_{\delta}})_{(\overrightarrow{R_1})_V})\subseteq F_m^{\alpha_1}((\overrightarrow{R}_{\delta})_{(\overleftarrow{R_2})_V}\times(\overrightarrow{R_{\delta}})_{(\overrightarrow{R_2})_V}), 
	\end{align*}
	%%%and so  $F_m^{\alpha_2}(\overrightarrow{R}(\overleftarrow{R_1}(U))\times \overrightarrow{R}(\overrightarrow{R_1}(U)))\subseteq F_m^{\alpha_1}(\overrightarrow{R}(\overleftarrow{R_2}(U))\times \overrightarrow{R}(\overrightarrow{R_2}(U)))$ by the induction hypothesis.
	By Lemma \ref{Prop lifted relation} (2),
	\begin{align*}
		&(F_m^{\alpha_2}((\overrightarrow{R}_{\delta})_{(\overleftarrow{R_1})_V}\times(\overrightarrow{R_{\delta}})_{(\overrightarrow{R_1})_V}))_{\alpha_2}^{\dag}\subseteq(F_m^{\alpha_1}((\overrightarrow{R}_{\delta})_{(\overleftarrow{R_1})_V}\times(\overrightarrow{R_{\delta}})_{(\overrightarrow{R_1})_V}))_{\alpha_1}^{\dag}, 
	\end{align*}
	%%%$(F_m^{\alpha}(\overrightarrow{R}(\overleftarrow{R_1}(U))\times \overrightarrow{R}(\overrightarrow{R_1}(U))))_{\alpha_2}^{\dag}\subseteq(F_m^{\alpha}(\overrightarrow{R}(\overleftarrow{R_2}(U))\times \overrightarrow{R}(\overrightarrow{R_2}(U))))_{\alpha_1}^{\dag}$,
	and hence $F^{\alpha_2}_{m+1}(R_1)\subseteq F^{\alpha_1}_{m+1}(R_2)$ by the definition of $F_n^{\alpha}$, $n\in\mathbb{N}$. Therefore $F_n^{\alpha}$, $n\in\mathbb{N}$, is a monotonic function.
\end{proof}

The $k$-limited $\alpha$-bisimulation is now proved to be a post-fixed point of $F_k^{\alpha}$.
\begin{Thm}\label{fixed-point characterization}
	{ The following statements hold for $\alpha\in[0,1]$ and $k\in\mathbb{N}$.
		\begin{enumerate}
			\item[(1)] $R$ is a $k$-limited $\alpha$-bisimulation iff $R$ is a post-fixed point of $F_k^{\alpha}$.%%% i.e., $R\subseteq F_{\alpha}(R)$.
			\item[(2)] $R_1$ and $R_2$ are post-fixed points of $F_k^{\alpha}$, then so is $R_1\cup R_2$.
		\end{enumerate}
	}
\end{Thm}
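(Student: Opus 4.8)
The plan is to establish part (1) by induction on $k$ and then read off part (2) from the monotonicity of $F_k^{\alpha}$ proved just above. Before the main induction I would record one auxiliary fact, itself proved by induction on $k$:
\[
F_k^{\alpha}(A\times B)\subseteq\;\approx_k^{\alpha}\qquad\text{for all }\alpha\in[0,1],\ k\in\mathbb{N},\ A,B\subseteq V .
\]
The base case holds because $\approx_0^{\alpha}=V\times V$ and $F_0^{\alpha}(A\times B)=A\times B$. For the step, if $(u,v)\in F_{k+1}^{\alpha}(A\times B)$ and $u\os{\tau}\lra p$, the defining clause of $F_{k+1}^{\alpha}$ yields $v\os{\tau}\lra q$ with $(p,q)$ in $\bigl(F_k^{\alpha}((\orra{R_{\delta}})_{(\olla{A\times B})_V}\times(\orra{R_{\delta}})_{(\orra{A\times B})_V})\bigr)_{\alpha}^{\dag}$; the inner relation is contained in $\approx_k^{\alpha}$ by the induction hypothesis, so Lemma \ref{Prop lifted relation}(2) gives $(p,q)\in(\approx_k^{\alpha})_{\alpha}^{\dag}$, and the symmetric argument handles transitions of $v$. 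By Definition \ref{Def k-limited bisimulation}, $(u,v)\in\;\approx_{k+1}^{\alpha}$.

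Next I would prove part (1). The case $k=0$ is vacuous: every $R$ is a $0$-limited $\alpha$-bisimulation since $\approx_0^{\alpha}=V\times V$, and $R\subseteq F_0^{\alpha}(R)=R$ always holds. For the inductive step, fix $R\subseteq V\times V$ and put $A=(\orra{R_{\delta}})_{(\olla{R})_V}$, $B=(\orra{R_{\delta}})_{(\orra{R})_V}$; note that any distribution reachable in one step from a first component of $R$ has support inside $A$, and likewise for second components and $B$. Suppose first that $R$ is a $(k+1)$-limited $\alpha$-bisimulation, i.e. $R\subseteq\;\approx_{k+1}^{\alpha}$. Given $(u,v)\in R$ and $u\os{\tau}\lra p$, there is $v\os{\tau}\lra q$ with $(p,q)\in(\approx_k^{\alpha})_{\alpha}^{\dag}$; since ${\rm su}(p)\subseteq A$ and ${\rm su}(q)\subseteq B$, the support-invariance remark following Definition \ref{relation lifting} lets me replace $\approx_k^{\alpha}$ by $(A\times B)\cap\approx_k^{\alpha}$, which is a $k$-limited $\alpha$-bisimulation (being a subrelation of $\approx_k^{\alpha}$) and hence, by the induction hypothesis together with the monotonicity of $F_k^{\alpha}$, satisfies $(A\times B)\cap\approx_k^{\alpha}\subseteq F_k^{\alpha}((A\times B)\cap\approx_k^{\alpha})\subseteq F_k^{\alpha}(A\times B)$; Lemma \ref{Prop lifted relation}(2) then yields $(p,q)\in\bigl(F_k^{\alpha}(A\times B)\bigr)_{\alpha}^{\dag}$, and together with the symmetric clause this is exactly $(u,v)\in F_{k+1}^{\alpha}(R)$. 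Conversely, suppose $R\subseteq F_{k+1}^{\alpha}(R)$. Given $(u,v)\in R$ and $u\os{\tau}\lra p$, one obtains $v\os{\tau}\lra q$ with $(p,q)\in\bigl(F_k^{\alpha}(A\times B)\bigr)_{\alpha}^{\dag}$; the auxiliary fact and Lemma \ref{Prop lifted relation}(2) upgrade this to $(p,q)\in(\approx_k^{\alpha})_{\alpha}^{\dag}$, and with the symmetric clause $(u,v)\in\;\approx_{k+1}^{\alpha}$, so $R\subseteq\;\approx_{k+1}^{\alpha}$.

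Part (2) is then immediate and needs only monotonicity: from $R_i\subseteq F_k^{\alpha}(R_i)$ and $R_i\subseteq R_1\cup R_2$ we get $R_i\subseteq F_k^{\alpha}(R_i)\subseteq F_k^{\alpha}(R_1\cup R_2)$ for $i=1,2$, so $R_1\cup R_2\subseteq F_k^{\alpha}(R_1\cup R_2)$. (Alternatively, combine part (1) with Proposition \ref{Prop k-limited bisimulation}(3).)

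The step I expect to be the main obstacle is the inductive verification of part (1): in the definition of $F_{i+1}^{\alpha}$ the relation $\approx_i^{\alpha}$ from Definition \ref{Def k-limited bisimulation} has been replaced by $F_i^{\alpha}$ applied to the full relation on the one-step reachable states, so one must argue that these two choices induce the same lifted relation on the distributions that can actually occur. The support-invariance remark after Definition \ref{relation lifting} — passing between $R_{\alpha}^{\dag}$ and $(({\rm su}(p)\times{\rm su}(q))\cap R)_{\alpha}^{\dag}$ — together with the auxiliary inclusion $F_k^{\alpha}(A\times B)\subseteq\;\approx_k^{\alpha}$ is exactly what bridges this gap. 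Beyond that, the only real care required is bookkeeping: keeping the nested neighbourhood terms $(\orra{R_{\delta}})_{(\olla{R})_V}$ and $(\orra{R_{\delta}})_{(\orra{R})_V}$ straight through the two nested inductions.
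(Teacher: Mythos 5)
Your proposal is correct and takes essentially the same route as the paper's proof: induction on $k$ driven by the support-invariance remark after Definition \ref{relation lifting}, the monotonicity of the lifting (Lemma \ref{Prop lifted relation}(2)), and the monotonicity of $F_k^{\alpha}$. Your auxiliary inclusion $F_k^{\alpha}(A\times B)\subseteq\;\approx_k^{\alpha}$ is just a sharper formulation of the paper's claim that $F_k^{\alpha}$ applied to the inner product relation yields a $k$-limited $\alpha$-bisimulation (and it tidies the paper's induction hypothesis, which is phrased for $F_m^{\alpha}(R)$ although $F_{m+1}^{\alpha}$ invokes $F_m^{\alpha}$ on the product of successor sets), while your direct monotonicity argument for part (2) is only a minor variant of the paper's appeal to part (1) together with Proposition \ref{Prop k-limited bisimulation}(3).
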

\begin{proof}
	(1) (Sufficiency.) We first show that $F_k^{\alpha}(R)$ is a $k$-limited $\alpha$-bisimulation. Evidently, $F_0^{\alpha}(R)$ is a 0-limited $\alpha$-bisimulation. Suppose that $F_m^{\alpha}(R)$ is an $m$-limited $\alpha$-bisimulation. Then $F_m^{\alpha}(R)\subseteq\approx_m^{\alpha}$, which implies that $(F_m^{\alpha}(R))_{\alpha}^{\dag}\subseteq(\approx_m^{\alpha})_{\alpha}^{\dag}$ by Proposition \ref{Prop lifted relation}. By Definition \ref{Def k-limited bisimulation} and the definition of $F_{m+1}^{\alpha}$, $F_{m+1}^{\alpha}(R)$ is an $(m+1)$-limited $\alpha$-bisimulation. Hence, if $R\subseteq F_{m+1}^{\alpha}(R)$, $R$ is an $(m+1)$-limited $\alpha$-bisimulation. The sufficiency holds.
	
	(Necessity.) It is trivial for $k=0$. Under the condition that the the case of $k=m$ holds, $m\geq0$, and $R$ is an $(m+1)$-limited $\alpha$-bisimulation, we see that for any $(u,v)\in R$, $u\os{\tau}\lra p$ implies $v\os{\tau}\lra q$ that satisfies $(p,q)\in(\approx_{m}^{\alpha})_{\alpha}^{\dag}$. By Definition \ref{relation lifting}, we have $(p,q)\in(({\rm su}(p)\times {\rm su}(q))\cap\approx_{m}^{\alpha})_{\alpha}^{\dag}$. Since $({\rm su}(p)\times {\rm su}(q))\cap\approx_{m}^{\alpha}$ is an $m$-limited $\alpha$-bisimulation, $({\rm su}(p)\times {\rm su}(q))\cap\approx_{m}^{\alpha}\subseteq F_{m}^{\alpha}(({\rm su}(p)\times {\rm su}(q))\cap\approx_{m}^{\alpha})$ by the induction hypothesis. Hence,
	\begin{align*}
		(p,q)\in&(F_{m}^{\alpha}(({\rm su}(p)\times {\rm su}(q))\cap\approx_{m}^{\alpha}))_{\alpha}^{\dag}\subseteq(F_{m}^{\alpha}((\overrightarrow{R}_{\delta})_{(\overleftarrow{R_1})_V}\times(\overrightarrow{R_{\delta}})_{(\overrightarrow{R_1})_V}))_{\alpha}^{\dag}.
	\end{align*}
	Therefore $R\subseteq F_{m+1}^{\alpha}(R)$ by the definition of $F_k^{\alpha}$.
	
	(2) By statement (1), $R_i$ is a $k$-limited $\alpha$-bisimulation, $i=1,2$, then by Proposition \ref{Prop k-limited bisimulation} (3), $R_1\cup R_2$ is a $k$-limited $\alpha$-bisimulation, and so $R_1\cup R_2$ is a fixed point of $F_k^{\alpha}$ by statement (1) again.
\end{proof}

The following fact follows from Theorem \ref{fixed-point characterization} immediately.

\begin{Prop}
	{ Let $\alpha\in[0,1]$. $\approx_k^{\alpha}$ is the greatest fixed-point of $F_k^{\alpha}$, i.e., $\approx_k^{\alpha}=\cup\{R\in\mathcal{P}(V\times V)\,|\,R\subseteq F_k^{\alpha}(R)\}$.
	}
\end{Prop}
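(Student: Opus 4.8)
The plan is to deduce the statement from Theorem~\ref{fixed-point characterization} together with the Knaster--Tarski fixed-point theorem applied to the complete lattice $(\mathcal{P}(V\times V),\subseteq)$, on which $F_k^{\alpha}$ is monotone by the preceding Proposition. Since $V$ is finite this lattice is complete, so $F_k^{\alpha}$ has a greatest fixed point and it coincides with $\bigcup\{R\in\mathcal{P}(V\times V)\mid R\subseteq F_k^{\alpha}(R)\}$, the union of all post-fixed points. Thus it suffices to identify this union with $\approx_k^{\alpha}$ and to check that $\approx_k^{\alpha}$ is genuinely a fixed point, not merely the greatest post-fixed point.

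First I would establish the two inclusions directly, bypassing any black-box use of Knaster--Tarski. By Theorem~\ref{fixed-point characterization}(1), a relation $R$ satisfies $R\subseteq F_k^{\alpha}(R)$ iff $R$ is a $k$-limited $\alpha$-bisimulation; hence $\bigcup\{R\mid R\subseteq F_k^{\alpha}(R)\}$ is exactly the union of all $k$-limited $\alpha$-bisimulations. Since $\approx_k^{\alpha}$ is itself a $k$-limited $\alpha$-bisimulation, it is one of the sets being unioned, so $\approx_k^{\alpha}\subseteq\bigcup\{R\mid R\subseteq F_k^{\alpha}(R)\}$. Conversely, every $R$ occurring in the union is a $k$-limited $\alpha$-bisimulation and therefore contained in $\approx_k^{\alpha}$, by the definition of $\approx_k^{\alpha}$ as the greatest such relation (equivalently, by Proposition~\ref{Prop k-limited bisimulation}(3) and the remark that $k$-limited $\alpha$-bisimilarity is the union of all $k$-limited $\alpha$-bisimulations). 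This yields the reverse inclusion, and hence the displayed equality.

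It then remains to confirm that $\approx_k^{\alpha}$ is a fixed point of $F_k^{\alpha}$. From Theorem~\ref{fixed-point characterization}(1) applied with $R=\approx_k^{\alpha}$ we already have $\approx_k^{\alpha}\subseteq F_k^{\alpha}(\approx_k^{\alpha})$. For the reverse inclusion I would invoke the argument carried out in the sufficiency part of the proof of Theorem~\ref{fixed-point characterization}(1), where it is shown by induction on $k$ that $F_k^{\alpha}$ maps an arbitrary relation to a $k$-limited $\alpha$-bisimulation; in particular $F_k^{\alpha}(\approx_k^{\alpha})$ is a $k$-limited $\alpha$-bisimulation and so is contained in $\approx_k^{\alpha}$. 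Combining the two inclusions gives $\approx_k^{\alpha}=F_k^{\alpha}(\approx_k^{\alpha})$, so $\approx_k^{\alpha}$ is a fixed point, and being the largest post-fixed point it is a fortiori the largest fixed point.

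The argument is essentially bookkeeping, so no step is genuinely difficult; the only point that deserves care is the passage from ``greatest post-fixed point'' to ``greatest fixed point'', which is exactly where one must use that $F_k^{\alpha}$ lands inside the class of $k$-limited $\alpha$-bisimulations (or, alternatively, simply cite Knaster--Tarski to close that gap).
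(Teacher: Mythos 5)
Your proposal is correct and follows the same route the paper intends: the paper states the proposition as an immediate consequence of Theorem \ref{fixed-point characterization} (i.e., the Knaster--Tarski argument on the complete lattice $(\mathcal{P}(V\times V),\subseteq)$, with the union of post-fixed points identified with the union of all $k$-limited $\alpha$-bisimulations, which is $\approx_k^{\alpha}$). Your extra care in upgrading ``greatest post-fixed point'' to ``greatest fixed point'' via the fact, established in the sufficiency part of that theorem's proof, that $F_k^{\alpha}$ always outputs a $k$-limited $\alpha$-bisimulation, is exactly the detail the paper leaves implicit.
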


\section{Algorithmic characterization}
This section starts with some conditions for two states to be $k$-limited bisimilar, which is the foundation for devising an algorithm for determining the degree of $k$-limited similarity.

\subsection{Conditions for two states to be $k$-limited $\alpha$-bisimilar}
To study the relationship between $k$-limited $\alpha$-bisimulation and $\alpha$-bisimulation under $\widetilde{S}$, $k$-limited $\alpha$-bisimulation vector is proposed, which is composed of $i$-limited $\alpha$-bisimulations, $0\leq i\leq k$.

\begin{Def}\label{Def k-limited bisimulation vector}
	{\rm Let $\mathcal{S}=(V,\Sigma,\delta)$ be an NFTS, $k\in\mathbb{N}$ and $\alpha\in[0,1]$. Let $B$ be a $(k+1)$-vector with $B[i]\subseteq V\times V$, $1\leq i\leq k+1$. Vector $B$ is called a {\it $k$-limited $\alpha$-bisimulation vector} if for any $(u',v')\in B[i]$ and $\tau\in\Sigma$, $1\leq i\leq k$, the following hold:
		\begin{enumerate}
			\item[(1)]\ $u\os{\tau}\lra p$ implies $v\os{\tau}\lra q$ such that $(p,q)\in (B[i+1])_{\alpha}^{\dag}$.
			\item[(2)]\ $v\os{\tau}\lra q$ implies $u\os{\tau}\lra p$ such that $(p,q)\in(B[i+1])_{\alpha}^{\dag}$.
		\end{enumerate}
	}
\end{Def}

The greatest $k$-limited $\alpha$-bisimulation vector contained in a $k$-vector is called the $k$-limited $\alpha$-bisimilarity vector contained in the $k$-vector. By the following example, $k$-limited $\alpha$-bisimulation vector is illustrated.
\begin{Eg}
	{\rm Reconsider the NFTS $\mathcal{S}=(V,\Sigma,\delta)$ presented in Fig. \ref{Figure2}. Let $\mathcal{L}$ be a Product algebra. Then
		%%%\begin{align*}
		$(\{(u,v)\},$ $\{(u_1,v_1),(u_2,v_2)\},\{(u_3,v_3),(u_4,v_4)\})$
		%%%\end{align*}
		is a $2$-limited $\frac{2}{3}$-bisimulation vector.
	}
\end{Eg}

By Definition \ref{Def k-limited bisimulation vector}, $u\approx_{i}^{\alpha}v$ can be obtained from $(u,v)\in B[k-i+1]$ under the condition that $B$ is a $k$-limited $\alpha$-bisimulation vector.
%we see that if there is a $k$-limited $\alpha$-bisimulation vector $B$ that meets $(u,v)\in B[k-i+1]$, then $u\approx_{i}^{\alpha}v$. 
In addition, any $(k+1)$-vector $B$ can be referred to as a $k$-limited $\alpha$-bisimulation vector for some $\alpha\in[0,1]$, and so the degree of $k$-limited similarity between them can be measured resorting to $(k+1)$-vector.
In order to show this fact conveniently, for any $k$-vector $B$ with elements belonging to $\mathcal{P}(V\times V)$, let
\begin{align*}
	%%%Q\colon&\mathcal{B}_k(S\times S)\lra\mathcal{B}_k(S\times S),\\
	Q(B)=&\bigwedge_{1\leq i\leq k-1}\bigwedge_{(u_i,v_i)\in B[i]}(\bigwedge_{u_i\os{\tau}\lra p}\bigvee_{v_i\os{\tau}\lra q}\widetilde{S}(p,q,B[i+1])\wedge\bigwedge_{v_i\os{\tau}\lra q}\bigvee_{u_i\os{\tau}\lra p}\widetilde{S}(p,q,B[i+1])).
\end{align*}

\begin{Prop}\label{Degree of bismilation vector}
	{Let $\mathcal{S}=(V,\Sigma,\delta)$ be an NFTS, $k\in\mathbb{N}$ and $\alpha\in[0,1]$. For any $k$-vector with elements being a relation on $V$, vector $B$ is a $k$-limited $\alpha$-bisimulation vector iff $\alpha\leq Q(B)$.
	}
\end{Prop}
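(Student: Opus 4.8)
The plan is to establish the equivalence by unfolding the two definitions involved and then matching nested quantifiers with iterated lattice operations. The key bridge is Definition \ref{relation lifting}: for any $R\subseteq V\times V$ and any $p,q\in\mathcal{R}(V)$ one has $(p,q)\in R_{\alpha}^{\dag}$ iff $\widetilde{S}(p,q,R)\geq\alpha$. Applying this with $R=B[i+1]$, Definition \ref{Def k-limited bisimulation vector} says that $B$ is a $k$-limited $\alpha$-bisimulation vector precisely when, for every $i$ with $1\leq i\leq k-1$, every $(u_i,v_i)\in B[i]$ and every $\tau\in\Sigma$: each transition $u_i\os{\tau}\lra p$ admits some $v_i\os{\tau}\lra q$ with $\widetilde{S}(p,q,B[i+1])\geq\alpha$, and symmetrically each $v_i\os{\tau}\lra q$ admits some $u_i\os{\tau}\lra p$ with $\widetilde{S}(p,q,B[i+1])\geq\alpha$. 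The proof is then the passage between this prefix of nested $\forall/\exists$ conditions and the lattice expression $Q(B)$, using that a universally quantified family of inequalities ``$x_j\geq\alpha$ for all $j$'' means ``$\bigwedge_j x_j\geq\alpha$'', and that ``some $j$ has $x_j\geq\alpha$'' gives ``$\bigvee_j x_j\geq\alpha$''.

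For the ``only if'' direction, suppose $B$ is a $k$-limited $\alpha$-bisimulation vector. Fix $i$, $(u_i,v_i)\in B[i]$, $\tau\in\Sigma$ and a transition $u_i\os{\tau}\lra p$. The matching transition $v_i\os{\tau}\lra q$ guaranteed by the definition satisfies $\widetilde{S}(p,q,B[i+1])\geq\alpha$, hence $\bigvee_{v_i\os{\tau}\lra q}\widetilde{S}(p,q,B[i+1])\geq\alpha$. Since this lower bound holds for every transition $u_i\os{\tau}\lra p$, the meet $\bigwedge_{u_i\os{\tau}\lra p}\bigvee_{v_i\os{\tau}\lra q}\widetilde{S}(p,q,B[i+1])$ is still $\geq\alpha$; the symmetric half contributes the analogous bound for the second conjunct in $Q(B)$. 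Meeting the two conjuncts and then meeting over all $(u_i,v_i)\in B[i]$ and over all $i$ yields $Q(B)\geq\alpha$, i.e. $\alpha\leq Q(B)$.

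For the ``if'' direction, suppose $\alpha\leq Q(B)$. Unfolding $Q(B)$, for every $i$, every $(u_i,v_i)\in B[i]$, every $\tau\in\Sigma$ and every transition $u_i\os{\tau}\lra p$ we obtain $\bigvee_{v_i\os{\tau}\lra q}\widetilde{S}(p,q,B[i+1])\geq Q(B)\geq\alpha$. It remains to produce a concrete matching transition $v_i\os{\tau}\lra q$ with $\widetilde{S}(p,q,B[i+1])\geq\alpha$: since $\delta(v_i,\tau)$ is finite, the join over $v_i\os{\tau}\lra q$ is attained, so such a $q$ exists, and then $(p,q)\in(B[i+1])_{\alpha}^{\dag}$ by Definition \ref{relation lifting}. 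This is exactly clause (1) of Definition \ref{Def k-limited bisimulation vector}, and the symmetric argument gives clause (2); hence $B$ is a $k$-limited $\alpha$-bisimulation vector.

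The only step requiring genuine care is this witness extraction in the ``if'' direction, where a supremum over the set of $\tau$-successor distributions of $v_i$ must be realized by an actual transition; this is where finiteness of $\delta$ (part of the standing assumption that the NFTS is finite in the algorithmic development) enters. Everything else is the routine correspondence ``$\forall\mapsto\bigwedge$, $\exists\mapsto\bigvee$'' together with the defining property $(p,q)\in R_{\alpha}^{\dag}\Leftrightarrow\widetilde{S}(p,q,R)\geq\alpha$, so no induction or deeper structural argument is needed.
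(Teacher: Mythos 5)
Your proposal is correct and follows essentially the same route as the paper: both directions amount to unfolding Definition \ref{Def k-limited bisimulation vector} together with Definition \ref{relation lifting} and translating the universal/existential clauses into the meets and joins of $Q(B)$, the paper merely dressing the ``if'' direction as a downward induction on the components $B[i]$ and the ``only if'' direction as a proof by contradiction. Your explicit appeal to finiteness of $\delta$ to turn $\bigvee_{v_i\xrightarrow{\tau} q}\widetilde{S}(p,q,B[i+1])\geq\alpha$ into an actual matching transition is a step the paper performs silently, so your write-up is, if anything, slightly more careful on that point.
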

\begin{proof}
	(Sufficiency.) Assume that $\alpha\leq Q(B)$. By $Q(B)$, we have
	\begin{align*}
		\alpha\leq&\bigwedge_{1\leq i\leq k}\bigwedge_{(u_i,v_i)\in B[i]}(\bigwedge_{u_i\os{\tau}\lra p}\bigvee_{v_i\os{\tau}\lra q}\widetilde{S}(p,q,B[i+1])\wedge\bigwedge_{v_i\os{\tau}\lra q}\bigvee_{u_i\os{\tau}\lra p}\widetilde{S}(p,q,B[i+1])).
	\end{align*}
	We know that $B[k+1]$ is a 0-limited $\alpha$-bisimulation. Assume that $B[i]$ is a ($k-i+1$)-limited $\alpha$-bisimulation, $2\leq i\leq k+1$. We now show that $B[i-1]$ is a ($k-i+2$)-limited $\alpha$-bisimulation. 
	For any $r=(u_{i-1},v_{i-1})\in B[i-1]$, by
	\begin{align*}
		\bigwedge_{u_{i-1}\os{\tau}\lra p}\bigvee_{v_{i-1}\os{\tau}\lra q}\widetilde{S}(p,q,B[i])\geq\alpha,
	\end{align*}
	we have that for any $u_{i-1}\os{\tau}\lra p$, there exists $v_{i-1}\os{\tau}\lra q$ such that $\widetilde{S}(p,q,B[i])\geq\alpha$, i.e.,
	$(p,q)\in(B[i])_{\alpha}^{\dag}$. Similarly, for any $v_{i-1}\os{\tau}\lra q$, there exits $u_{i-1}\os{\tau}\lra p$ such that $(p,q)\in(B[i])_{\alpha}^{\dag}$. By Definition \ref{Def k-limited bisimulation}, $B[i-1]$ is a ($k-i+2$)-limited $\alpha$-bisimulation. Hence $B$ is a $k$-limited $\alpha$-bisimulation vector by Definition \ref{Def k-limited bisimulation vector}. The sufficiency holds.
	
	%($\Longrightarrow$) 
	(Necessity.) By contradiction, assume that  $\alpha>Q(B)$ for some $\alpha\in[0,1]$ such that $B$ is a $k$-limited $\alpha$-bisimulation vector. Then there exists $(u_i,v_i)\in B[i]$ for some $1\leq i\leq k$ such that
	\begin{align*}
		&\bigwedge_{u_i\os{\tau}\lra p}\bigvee_{v_i\os{\tau}\lra q}\widetilde{S}(p,q,B[i+1])\wedge\bigwedge_{v_i\os{\tau}\lra q}\bigvee_{u_i\os{\tau}\lra p}\widetilde{S}(p,q,B[i+1])<\alpha.
	\end{align*}
	Without affecting the results, we can suppose that
	\begin{align*}
		\bigwedge_{u_i\os{\tau}\lra p}\bigvee_{v_i\os{\tau}\lra q}\widetilde{S}(p,q,B[i+1])<\alpha.
	\end{align*}
	This means that there exists $u_i\os{\tau}\lra p$ that satisfies that there is no $v_i\os{\tau}\lra q$ satisfying $(p,q)\in(B[i+1])_{\alpha}^{\dag}$. Hence $B$ is not a $k$-limited $\alpha$-bisimulation vector. This contradicts the hypothesis that $B$ is a $k$-limited $\alpha$-bisimulation vector. Therefore $\alpha\leq Q(B)$.
\end{proof}

We now reveal the equivalence between $k$-limited $\alpha$-bisimulation and $\alpha$-bisimulation under $\widetilde{S}$. 
\begin{Prop}
	{Given an NFTS $\mathcal{S}=(V,\Sigma,\delta)$, $u,v\in V$, $k\in\mathbb{N}$ and $\alpha\in[0,1]$. Let $B$ be a $(k+1)$-vector with $B[i+1]\subseteq D_u[i]\times D_v[i]$, $0\leq i\leq k$. Then $B$ is a $k$-limited $\alpha$-bisimulation vector iff $\cup_{1\leq i\leq k+1}B[i]$ is an $\alpha$-bisimulation under $\widetilde{S}$ between $T_{\mathcal{S}}(u,k)$ and $T_{\mathcal{S}}(v,k)$.
	}
\end{Prop}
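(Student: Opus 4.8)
The plan is to unwind Definitions \ref{Def app-bisimulation} and \ref{Def k-limited bisimulation vector} against the layered structure of the depth-$k$ unfoldings $T_{\mathcal{S}}(u,k)$ and $T_{\mathcal{S}}(v,k)$, whose level-$i$ node sets are $D_u[i]$ and $D_v[i]$, and to check that the two conditions match level by level; no induction is needed, since for each fixed $i$ the required implication is local. The structural facts I will use are: (a) in $T_{\mathcal{S}}(u,k)$ a node of level $i-1$ is a leaf when $i-1=k$, and otherwise carries exactly the transitions $u'\os{\tau}\lra p$ of $\mathcal{S}$ but now with $\mathrm{su}(p)\subseteq D_u[i]$, so a transition out of a level-$(i-1)$ node always lands on a distribution supported on level-$i$ nodes; and (b) the levels $D_u[0],\dots,D_u[k]$ are pairwise disjoint as node sets of the unfolding. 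Writing $R=\bigcup_{1\le i\le k+1}B[i]$, the hypothesis $B[i+1]\subseteq D_u[i]\times D_v[i]$ makes $R$ a genuine relation between the node sets of $T_{\mathcal{S}}(u,k)$ and $T_{\mathcal{S}}(v,k)$, and for distributions $p$ over $D_u[i]$ and $q$ over $D_v[i]$ it gives $(\mathrm{su}(p)\times\mathrm{su}(q))\cap R=(\mathrm{su}(p)\times\mathrm{su}(q))\cap B[i+1]$, because each $B[j]$ with $j\ne i+1$ relates nodes at level $j-1\ne i$. Combined with the remark after Definition \ref{relation lifting}, this yields the crucial equivalence $(p,q)\in R^{\dag}_{\alpha}$ iff $(p,q)\in(B[i+1])^{\dag}_{\alpha}$ for such $p,q$.

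\textbf{Forward direction.} Assume $B$ is a $k$-limited $\alpha$-bisimulation vector and take $(u',v')\in R$, so $(u',v')\in B[i]$ for some $1\le i\le k+1$. If $i=k+1$, then $u'$ and $v'$ are leaves, so the simulation clauses of Definition \ref{Def app-bisimulation} hold vacuously. If $1\le i\le k$, then every transition $u'\os{\tau}\lra p$ in $T_{\mathcal{S}}(u,k)$ is a transition of $\mathcal{S}$ with $\mathrm{su}(p)\subseteq D_u[i]$, and by Definition \ref{Def k-limited bisimulation vector}(1) there is $v'\os{\tau}\lra q$ with $(p,q)\in(B[i+1])^{\dag}_{\alpha}$; by the equivalence above (or directly from $B[i+1]\subseteq R$ and Lemma \ref{Prop lifted relation}(2)) we get $(p,q)\in R^{\dag}_{\alpha}$. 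The matching of transitions out of $v'$ is handled the same way via Definition \ref{Def k-limited bisimulation vector}(2) (and Lemma \ref{Prop lifted relation}(5) for $R^{-1}$). Hence both $R$ and $R^{-1}$ are $\alpha$-simulations under $\widetilde{S}$, i.e., $R$ is an $\alpha$-bisimulation under $\widetilde{S}$ between $T_{\mathcal{S}}(u,k)$ and $T_{\mathcal{S}}(v,k)$.

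\textbf{Converse.} Assume $R$ is an $\alpha$-bisimulation under $\widetilde{S}$ between the two unfoldings, and fix $1\le i\le k$ and $(u',v')\in B[i]$. Then $(u',v')\in R$, and in $T_{\mathcal{S}}(u,k)$ the node $u'$ sits at level $i-1<k$, so it is not a leaf and any $u'\os{\tau}\lra p$ in $\mathcal{S}$ is a transition of the unfolding with $\mathrm{su}(p)\subseteq D_u[i]$. Since $R$ is an $\alpha$-simulation, there is $v'\os{\tau}\lra q$ in $T_{\mathcal{S}}(v,k)$, hence in $\mathcal{S}$ with $\mathrm{su}(q)\subseteq D_v[i]$, such that $(p,q)\in R^{\dag}_{\alpha}$; the equivalence from the first paragraph then gives $(p,q)\in(B[i+1])^{\dag}_{\alpha}$, which is clause (1) of Definition \ref{Def k-limited bisimulation vector}. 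Applying the same argument to $R^{-1}$ gives clause (2). As $i$ and $(u',v')$ were arbitrary, $B$ is a $k$-limited $\alpha$-bisimulation vector.

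\textbf{Main obstacle.} The one place that needs genuine care is the bookkeeping between a state ``reachable from $u$ in $i$ steps'' and a ``level-$i$ node of $T_{\mathcal{S}}(u,k)$'': the levels must be treated as disjoint (they may coincide as plain subsets of $V$), and one has to make sure that $\widetilde{S}$, which is evaluated over the ambient state set, gives the same value whether computed in $\mathcal{S}$ or in the unfolding. This is precisely what the support-restriction remark after Definition \ref{relation lifting} provides, together with the level-separation observation (b) above; granting that, the rest is a routine side-by-side reading of Definitions \ref{Def app-bisimulation} and \ref{Def k-limited bisimulation vector}.
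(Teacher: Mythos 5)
Your proof is correct and follows essentially the same route as the paper's: both directions rest on the level structure of the unfoldings (leaves at depth $k$, one-step transitions from a level-$(i-1)$ node landing on a level-$i$ supported distribution) together with the support-restriction property of the lifting, which converts between $R^{\dag}_{\alpha}$ and $(B[i+1])^{\dag}_{\alpha}$ exactly as in the paper's key inclusion. The only difference is presentational: the paper wraps this local, level-by-level verification in inductions over the level index, which you correctly observe are not needed.
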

\begin{proof}
	(Sufficiency.) Assume that $\cup_{1\leq i\leq k+1}B[i]$ is an $\alpha$-bisimulation under $\widetilde{S}$ between $T_{\mathcal{S}}(u,k)$ and $T_{\mathcal{S}}(v,k)$. It is obvious that $B[k+1]$ is a 0-limited $\alpha$-bisimulation. Suppose that $B[g+1]$ is a $(k-g)$-limited $\alpha$-bisimulation, $1\leq g\leq k$. Then we prove that $B[g]$ is a $(k-g+1)$-limited $\alpha$-bisimulation. For any $(u_{g-1},v_{g-1})\in B[g]$, since $u_{g-1}\sim^{\alpha}v_{g-1}$ and the equations in (\ref{set of k transitions}), $u_{g-1}\os{\tau}\lra p$ implies $v_{g-1}\os{\tau}\lra q$ that satisfies
	\begin{align*}
		&(p,q)\in(({\rm su}(p)\times {\rm su}(q))\cap(\cup_{1\leq i\leq k+1}B[i]))_{\alpha}^{\dag}\subseteq(B[g+1])_{\alpha}^{\dag}.
	\end{align*}
	By the hypothesis that $B[g+1]$ is a $(k-g)$-limited $\alpha$-bisimulation, we conclude that $B[g+1]$ is a $(k-g)$-limited $\alpha$-bisimulation, and so
	%%%\begin{align*}
	$(\mu,\eta)\in(B[g+1])_{\alpha}^{\dag}\subseteq(\approx_{k-g}^{\alpha})_{\alpha}^{\dag}$.
	%%%\end{align*}
	The symmetric case that any transition from $v_{g-1}$ can be analyzed analogously, and we thus omit it. By Definition \ref{Def k-limited bisimulation}, $u_{g-1}\approx_{k-g+1}^{\alpha}v_{g-1}$, i.e., $B[g]$ is a $(k-g+1)$-limited $\alpha$-bisimulation. We complete the sufficiency.
	
	(Necessity.) By the construction of $T_{\mathcal{S}}(u,k)$ and $T_{\mathcal{S}}(v,k)$, and the definition of $B[k+1]$, $B[k+1]$ is a relation between the set of steady states of $T_{\mathcal{S}}(u,k)$ and the set of steady states of $T_{\mathcal{S}}(v,k)$, and so $B[k+1]$ is an $\alpha$-bisimulation under $\widetilde{S}$ between $T_{\mathcal{S}}(u,k)$ and $T_{\mathcal{S}}(v,k)$. Suppose that $\cup_{g\leq i\leq k+1}B[i]$ is an $\alpha$-bisimulation under $\widetilde{S}$ between $T_{\mathcal{S}}(u,k)$ and $T_{\mathcal{S}}(v,k)$, $2\leq g\leq k+1$.
	%%%Then for any $(s_k,t_k)\in B_k(s,t)[k]$, $s_k\sim^{\alpha}t_k$ in $T_M(s_k,t_k,0)$ by Lemma \ref{Lem s sim t}, and so $s_k\sim^{\alpha}t_k$
	We now show that $\cup_{g-1\leq i\leq k+1}B[i]$ is an $\alpha$-bisimulation under $\widetilde{S}$ between $T_{\mathcal{S}}(u,k)$ and $T_{\mathcal{S}}(v,k)$. For any $(u_{l},v_{l})\in\cup_{g-1\leq i\leq k+1}B[i]$, $(u_{l},v_{l})\in B[l']$ is assumed for some $g-1\leq l'\leq k+1$. Then by Definition \ref{Def k-limited bisimulation vector}, $u_{l}\os{\tau}\lra p$ implies $v_{l}\os{\tau}\lra q$ such that $(p,q)\in(B[l'+1])_{\alpha}^{\dag}\subseteq(\cup_{g-1\leq i\leq k+1}B[i])_{\alpha}^{\dag}$.
	
	Without changing the analysis method, the symmetric assertion can be proved. By the induction hypothesis and Definition \ref{Def app-bisimulation}, $\cup_{g-1\leq i\leq k+1}B[i]$ is an $\alpha$-bisimulation under $\widetilde{S}$ between $T_{\mathcal{S}}(u,k)$ and $T_{\mathcal{S}}(v,k)$. The necessity also holds.
\end{proof}

From Definition \ref{Def k-limited bisimulation}, we know that the $i$-th labels two states perform and the properties of $D_u[i]$ and $D_v[i]$, i.e., whether for any distribution in $D_u[i]$ has matching distribution in $D_v[i]$, and vise versa,  $0\leq i\leq k$, determine the similarity between two states $u$ and $v$. Definition \ref{Def k-limited bisimulation vector} illustrates that if $(u,v)$ belongs to the $1$-th element of a $k$-limited $\alpha$-bisimulation vector, then $u$ and $v$ are $k$-limited $\alpha$-bisimilar. Hence it is sufficient to construct the greatest $k$-limited $\alpha$-bisimulation vector $H_k^{\alpha}(u,v)$ contained in $B_k(u,v)=(D_u[0]\times D_v[0],D_u[1]\times D_v[1],\cdots,D_u[k]\times D_v[k])$, i.e., $H_k^{\alpha}(u,v)[i]\subseteq D_u[i]\times D_v[i]$, $0\leq i\leq k$, to detect $k$-limited $\alpha$-bisimilarity between $u$ and $v$.

Given a finite NFTS $\mathcal{S}=(V,\Sigma,\delta)$, $\alpha\in[0,1]$, and $u,v\in V$. For later use, we define a $(k+1)$-vector $H_k^{\alpha}(u,v)$ with elements belonging to $\mathcal{P}(V\times V)$ as follows:
\begin{equation}\label{sequences approx}
	\begin{split}
		H_{k}^{\alpha}(u,v)[k+1]&=D_u[k]\times D_v[k];\\
		H_{k}^{\alpha}(u,v)[i]&=\{(u_i,v_i)\in D_u[i]\times D_v[i]\,|\,\\
		u_i\os{\tau}\lra p\mbox{ i}&\mbox{mplies }v_i\os{\tau}\lra q \mbox{ such that }(p,q)\in(H_{k}^{\alpha}(u,v)[i+1])_{\alpha}^{\dag},\\
		v_i\os{\tau}\lra q\mbox{ i}&\mbox{mplies } u_i\os{\tau}\lra p\mbox{ such that }
		(p,q)\in(H_{k}^{\alpha}(u,v)[i+1])_{\alpha}^{\dag}\}, \quad 1\leq i\leq k.
	\end{split}
\end{equation}

An equivalent condition for determining two states to be $k$-limited $\alpha$-bisimilar is given resorting to the vector $H_k^{\alpha}(u,v)$.
%Based on the vector $H_k^{\alpha}(u,v)$, a necessary and sufficient condition for two states to be $k$-limited $\alpha$-bisimilar is given via the following theorem.
\begin{Thm}\label{Thm iff limited bisimilarity}
	{ Let $\mathcal{S}=(V,\Sigma,\delta)$ be a finitary NFTS, $u,v\in V$, $k\in\mathbb{N}$ and $\alpha\in[0,1]$. Let $H_{k}^{\alpha}(u,v)$ be the $(k+1)$-vector defined in (\ref{sequences approx}). Then $H_{k}^{\alpha}(u,v)$ is a $k$-limited $\alpha$-bisimulation vector contained in $B_k(u,v)$, and further $u\approx_k^{\alpha}v$ iff $H_{k}^{\alpha}(u,v)[1]=\{(u,v)\}$.
	}
\end{Thm}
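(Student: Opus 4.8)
The plan is to split the statement into three parts: (I) that $H_k^{\alpha}(u,v)$ is a $k$-limited $\alpha$-bisimulation vector contained in $B_k(u,v)$; (II) the ``if'' half of the final equivalence; and (III) the ``only if'' half. Parts (I) and (II) I expect to be short, essentially bookkeeping on top of constructions already in place, while (III) carries the real induction.

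For part (I), I would first observe that the containment $H_k^{\alpha}(u,v)[i]\subseteq B_k(u,v)[i]$ is built into the recursion (\ref{sequences approx}): the top component equals $D_u[k]\times D_v[k]$, and every lower component is explicitly carved out of the corresponding product appearing in $B_k(u,v)$. Next, the membership condition defining $H_k^{\alpha}(u,v)[i]$ in (\ref{sequences approx}) is verbatim clauses (1)--(2) of Definition \ref{Def k-limited bisimulation vector} with $B=H_k^{\alpha}(u,v)$, so $H_k^{\alpha}(u,v)$ is a $k$-limited $\alpha$-bisimulation vector; I would also record (for use in part III) that it is the greatest such vector inside $B_k(u,v)$, by a downward induction on the component index: if $B'\subseteq B_k(u,v)$ is another $k$-limited $\alpha$-bisimulation vector and $B'[i+1]\subseteq H_k^{\alpha}(u,v)[i+1]$, then every match $(p,q)\in(B'[i+1])^{\dag}_{\alpha}$ demanded by Definition \ref{Def k-limited bisimulation vector} already lies in $(H_k^{\alpha}(u,v)[i+1])^{\dag}_{\alpha}$ by the monotonicity in Lemma \ref{Prop lifted relation}(2), whence $B'[i]\subseteq H_k^{\alpha}(u,v)[i]$.

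For part (II), assume $H_k^{\alpha}(u,v)[1]=\{(u,v)\}$. By part (I) the vector $H_k^{\alpha}(u,v)$ is a $k$-limited $\alpha$-bisimulation vector, so running exactly the downward induction used in the proof of Proposition \ref{Degree of bismilation vector} (equivalently, invoking the remark following Definition \ref{Def k-limited bisimulation vector}) shows its first component is a $k$-limited $\alpha$-bisimulation; since $(u,v)$ belongs to it and $\approx_k^{\alpha}$ is the union of all $k$-limited $\alpha$-bisimulations by Proposition \ref{Prop k-limited bisimulation}(3), we get $u\approx_k^{\alpha}v$.

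For part (III), assume $u\approx_k^{\alpha}v$. The first component $B_k(u,v)[1]$ is the singleton product $\{(u,v)\}$, so already $H_k^{\alpha}(u,v)[1]\subseteq\{(u,v)\}$ and it remains only to place $(u,v)$ inside $H_k^{\alpha}(u,v)[1]$. I would derive this from the stronger inclusion
\[
\approx_{k-i+1}^{\alpha}\cap B_k(u,v)[i]\ \subseteq\ H_k^{\alpha}(u,v)[i],\qquad 1\le i\le k+1,
\]
proved by downward induction on $i$, keeping the remaining-depth parameter $k-i+1$ synchronized with $i$; the base case $i=k+1$ is an equality because $\approx_0^{\alpha}=V\times V$ and $H_k^{\alpha}(u,v)[k+1]=B_k(u,v)[k+1]=D_u[k]\times D_v[k]$. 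For the inductive step, take $(u',v')$ in the left-hand set; then $u'\approx_{k-i+1}^{\alpha}v'$ with $k-i+1\ge 1$, so Definition \ref{Def k-limited bisimulation} matches any $u'\os{\tau}\lra p$ by some $v'\os{\tau}\lra q$ with $(p,q)\in(\approx_{k-i}^{\alpha})^{\dag}_{\alpha}$, and by the support remark after Definition \ref{relation lifting} this reads $(p,q)\in\bigl(({\rm su}(p)\times{\rm su}(q))\cap\approx_{k-i}^{\alpha}\bigr)^{\dag}_{\alpha}$. Because $(u',v')\in B_k(u,v)[i]$, the state $u'$ is reachable from $u$ in the unfolding after $i-1$ transitions, so its one-step successors along $u'\os{\tau}\lra p$ force ${\rm su}(p)\times{\rm su}(q)\subseteq B_k(u,v)[i+1]$; combined with the induction hypothesis this gives $({\rm su}(p)\times{\rm su}(q))\cap\approx_{k-i}^{\alpha}\subseteq B_k(u,v)[i+1]\cap\approx_{k-i}^{\alpha}\subseteq H_k^{\alpha}(u,v)[i+1]$, hence $(p,q)\in(H_k^{\alpha}(u,v)[i+1])^{\dag}_{\alpha}$ by Lemma \ref{Prop lifted relation}(2). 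The symmetric clause for transitions out of $v'$ is handled identically, so $(u',v')\in H_k^{\alpha}(u,v)[i]$ by (\ref{sequences approx}). Instantiating the inclusion at $i=1$ with $u\approx_k^{\alpha}v$ yields $(u,v)\in H_k^{\alpha}(u,v)[1]$, completing (III). The main obstacle is exactly this last induction: one must keep the remaining-depth index aligned with the level index while simultaneously using the support-restriction property of $\widetilde{S}$, the fact that one-step successors of level-$(i-1)$ states stay within the $i$-th component of $B_k(u,v)$, the induction hypothesis, and the monotonicity of $(\cdot)^{\dag}_{\alpha}$; once these are threaded together, the rest is routine unwinding of Definitions \ref{Def k-limited bisimulation} and \ref{Def k-limited bisimulation vector}.
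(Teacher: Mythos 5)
Your proof is correct, and its sufficiency half coincides with the paper's (there, too, $H_k^{\alpha}(u,v)[1]=\{(u,v)\}$ plus the fact that $H_k^{\alpha}(u,v)$ is a $k$-limited $\alpha$-bisimulation vector yields $u\approx_k^{\alpha}v$ via the remark after Definition \ref{Def k-limited bisimulation vector}). For the necessity the underlying ingredients are the same — the support remark after Definition \ref{relation lifting}, the identification of the components of $H_k^{\alpha}(u,v)$ with the restriction of the appropriate limited bisimilarity, and the monotonicity in Lemma \ref{Prop lifted relation}(2) — but the organization genuinely differs. The paper argues by induction on $k$ and rests on a claim culminating in the equality $(D_u[i]\times D_v[i])\cap\approx_{k-i}^{\alpha}=H_{k}^{\alpha}(u,v)[i+1]$, which it asserts in a single line without carrying out the level-by-level argument that justifies it. You instead fix $k$ and run one downward induction on the component index, proving only the inclusion $\approx_{k-i+1}^{\alpha}\cap B_k(u,v)[i]\subseteq H_k^{\alpha}(u,v)[i]$ that the necessity actually needs, the key step being that one-step successors of level-$(i-1)$ states land in $D_u[i]\times D_v[i]$, so the support-restricted relation sits inside the next component and the induction hypothesis plus Lemma \ref{Prop lifted relation}(2) apply. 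This buys a cleaner argument: it dispenses with the outer induction on $k$ and supplies exactly the justification the paper's claim leaves implicit; the paper's version, in exchange, records the full two-sided equality (the converse inclusion follows from $H_k^{\alpha}(u,v)$ being a bisimulation vector), which is more information than the theorem requires. Your side remark in part (I) that $H_k^{\alpha}(u,v)$ is the greatest such vector is not needed afterwards and could be dropped, but it is harmless and consistent with what Algorithm \ref{alg:2} computes.
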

\begin{proof}
	(Sufficiency.) By Definition \ref{Def k-limited bisimulation vector}, $H_k^{\alpha}(u,v)$ is a $k$-limited $\alpha$-bisimulation vector contained in $B_k(u,v)$. Since $H_{k}^{\alpha}(u,v)[1]=\{(u,v)\}$, by Definition \ref{Def k-limited bisimulation vector}, $u\approx_k^{\alpha}v$. The sufficiency holds.
	
	(Necessity.) We begin with the claim: for any $p\in\mathcal{R}(D_u[i])$ and $q\in\mathcal{R}(D_v[i])$, $0\leq i\leq k$, the following fact holds:
	\begin{equation*}
		\begin{split}\label{iff iff iff}
			&(p,q)\in(\approx_{k-i}^{\alpha})_{\alpha}^{\dag} \mbox{ iff }(p,q)\in(H_{k}^{\alpha}(u,v)[i+1])_{\alpha}^{\dag}.
		\end{split}
	\end{equation*}
	
	By Definition \ref{relation lifting},
	\begin{align*}
		&(p,q)\in(\approx_{k-i}^{\alpha})_{\alpha}^{\dag} \mbox{ iff } (p,q)\in(({\rm su}(p)\times {\rm su}(q))\cap\approx_{k-i}^{\alpha})_{\alpha}^{\dag} \mbox{ iff }(p,q)\in((D_u[i]\times D_v[i])\cap\approx_{k-i}^{\alpha})_{\alpha}^{\dag}=(H_{k}^{\alpha}(u,v)[i+1])_{\alpha}^{\dag},
	\end{align*}
	which implies that the claim holds.
	
	We next show the necessity. It is obvious that if $u\approx_0^{\alpha}v$, then $H_{m+1}^{\alpha}(u,v)=\{(u,v)\}$. Resorting to the correctness of the statement for $k=m$,  $m\in\mathbb{N}$, and $u\approx_{m+1}^{\alpha}v$, by Definition \ref{Def k-limited bisimulation} and the claim, $u\os{\tau}\lra p$ implies $v\os{\tau}\lra q$ such that $(p,q)\in(H_{m+1}^{\alpha}(u,v)[2])_{\alpha}^{\dag}$. Similarly, $v\os{\tau}\lra q$ implies $u\os{\tau}\lra p$ such that $(p,q)\in(H_{m+1}^{\alpha}(u,v)[2])_{\alpha}^{\dag}$. Hence $(u,v)\in H_{m+1}^{\alpha}(u,v)[1]\subseteq\{(u,v)\}$, i.e., $H_{m+1}^{\alpha}(u,v)[1]=\{(u,v)\}$.
\end{proof}

\subsection{The degree of $k$-limited similarity}
This subsection is devoted to an algorithm, named  {\it Limited similarity}, to calculate the degree of $k$-limited similarity between two states in an NFTS.

Limited similarity is divided into three parts: 1-limited similarity (Algorithm \ref{alg:1}), limited bisimulation vector computation (Algorithm \ref{alg:2}), degree of $k$-limited similarity computation (Algorithm \ref{alg:3}). Each part is described as follows.

{\it 1-limited similarity} (Part 1). Let $\alpha\in[0,1]$, $u,v\in V$, and $R\subseteq V\times V$. The degree of 1-limited similarity between $u$ and $v$ with respect to $R$ is defined as the greatest $\alpha$ satisfying that $u\os{\tau}\lra p$ implies $v\os{\tau}\lra q$ that satisfies $(p,q)\in R_{\alpha}^{\dag}$, and that $v\os{\tau}\lra q$ implies $u\os{\tau}\lra p$ that satisfies $(p,q)\in R_{\alpha}^{\dag}$. In terms of the notion, Algorithm \ref{alg:1} aims to compute the degree of 1-limited similarity between $u$ and $v$ with respect to $R$, i.e., $\bigwedge_{u\os{\tau}\lra p}\bigvee_{v\os{\tau}\lra q}\widetilde{S}(p,q,R)\wedge\bigwedge_{v\os{\tau}\lra q}\bigvee_{u\os{\tau}\lra p}\widetilde{S}(p,q,R)$.

{\it Limited bisimulation vector computation} (Part 2). Let $B(u,v)$ be a $(k+1)$-vector with $B(u,v)[i+1]\subseteq D_u[i]\times D_v[i]$, $0\leq i\leq k$. Since $\mathbf{Vec}((u,v),k,\alpha,B(u,v))[k+1]=B(u,v)[k+1]$ is the $0$-limited $\alpha$-bisimilarity contained in $B(u,v)[k+1]$. Steps 4-8 of Algorithm \ref{alg:2} compute the $(k-i+1)$-limited $\alpha$-bisimilarity $\mathbf{Vec}((u,v),$ $k,\alpha,B(u,v))[i]$ contained in $B(u,v)[i]$, $1\leq i\leq k+1$. Hence, Algorithm \ref{alg:2} computes the greatest $k$-limited $\alpha$-bisimulation vector contained in $B(u,v)$.

{\it Degree of $k$-limited similarity computation} (Part 3). Let $B'(u,v)$ be a $(k+1)$-vector that satisfies $B'(u,v)[i+1]\subseteq D_u[i]\times D_v[i]$, $0\leq i\leq k$. The main idea of this part is that if $\alpha'\leq\alpha''$, then the greatest $k$-limited $\alpha''$-bisimulation vector contained in $B'(u,v)$ is contained in the greatest $k$-limited $\alpha'$-bisimulation vector which is contained in $B'(u,v)$.

Assume that $\mathbf{BIS}((u,v),k)=\alpha_m$. By Algorithm \ref{alg:2}, we know that $T_{j+1}=\mathbf{Vec}((u,v),k,\alpha_j,B_{j+1})$ is the greatest $k$-limited $\alpha_j$-bisimulation vector contained in $B_{j+1}$, and then by Step 5 of Algorithm \ref{alg:3} and Proposition \ref{Degree of bismilation vector}, $\alpha_{j+1}\geq\alpha_j$, $0\leq j\leq m-1$. Let
%%%\begin{align*}
$\epsilon=\alpha_{0}=\alpha_{l_0}<\alpha_1=\cdots=\alpha_{l_1-1}<\alpha_{l_1}\cdots\alpha_{l_n-1}<\alpha_{l_n}=\cdots=\alpha_{m}$, where $\alpha_m$ is the greatest number such that $T_m[1]=\{(u,v)\}$.
By Step 5 of Algorithm \ref{alg:3}, $T_{l_i}$ is the $k$-limited $\alpha_{l_i}$-bisimilarity vector contained in $B(u,v)$. 
By Step 8 of Algorithm \ref{alg:3}, $\mathbf{BIS}((u,v),k)$, i.e., $\alpha_m$, is the greatest real number such that $u\approx_k^{\alpha_m}v$. 

\begin{algorithm}[h]
	%%\dontprintsemicolon
	\caption{Compute the degree of 1-limited similarity.}
	\label{alg:1}
	%%%\KwData{$G=(X,U)$ such that $G^{tc}$ is an order.}
	\KwIn{NFTS $\mathcal{S}=(V,\Sigma,\delta)$, $k\in\mathbb{N}$, $u',v'\in V$ and $R\subseteq V\times V$.}
	%%%\KwResult{$G'=(X,V)$ with $V\subseteq U$ such that ${G'}^{tc}$ is an interval order.}
	\KwOut{$\mathbf{Deg}((u',v'),R)$, the degree of 1-limited similarity  between $u'$ and $v'$ with respect to $R_1$.}
	\textbf{Initialize} matrix $M$ to be zero\;
	%%%\Begin{
		%%%$V \longleftarrow U$\;
		%%%$S \longleftarrow \emptyset$\;
		\For{$u'\os{\tau}\lra p$}{
			\For{$v'\os{\tau}\lra q$}{
				\For{$u\in V$}{
					$N_1[u]=p(u)\rra q(\orra{R}_{u})$\;
					$N_2[u]=q(u)\rra p(\olla{R}_{u})$\;
				}
				$M((\tau,p),(\tau,q),R)=\bigwedge_{u\in V}(N_1[u]\wedge N_2[u])$\;
			}
		}
		\Return $\bigwedge_{u\os{\tau}\lra p}\bigvee_{v\os{\tau}\lra q}M((\tau,p),(\tau, q),R)\wedge\bigwedge_{v\os{\tau}\lra q}\bigvee_{u\os{\tau}\lra p}M((\tau,p),(\tau,q),R)$
	\end{algorithm}
	
	\begin{algorithm}[h]
		\caption{Compute limited bisimulation vector.}
		\label{alg:2}
		\KwIn{NFTS $\mathcal{S}=(V,\Sigma,\delta)$, $k\in\mathbb{N}$, $\alpha\in[0,1]$, $u,v\in V$, and a $(k+1)$-vector $B(u,v)$ with $B(u,v)[i+1]\subseteq D_u[i]\times D_v[i]$, $0\leq i\leq k$.}
		\KwOut{$\mathrm{\mathbf{Vec}}((u,v),k,\alpha,B(u,v))$, the greatest $k$-limited $\alpha$-bisimulation vector contained in $B(u,v)$.}%
		%%\begin{algorithmic}[1]
		%%\STATE {$\mathbf{function}$ $\mathbf{Vec}((s,t),k,\alpha,B(s,t))$ }
		\textbf{Initialize} $H(u,v)[i]=B(u,v)[i]$, $1\leq i\leq k+1$, $j=k$\;
		\Repeat
		{$j=0$}
		{
			$j:=j-1$\;
			\For{$\mathbf{all}$ $(u_j,v_j)\in H(u,v)[j]$}{
				\If{$\mathbf{Deg}((u_j,v_j),H(u,v)[j+1])<\alpha$}{
					$H(u,v)[j]:=H(u,v)[j]\backslash \{(u,v),(v,u)\}$\;
				}
			}
		}
		\Return $(H(u,v)[1],H(u,v)[2],\cdots,H(u,v)[k+1])$
	\end{algorithm}
	
	\begin{algorithm}
		\caption{Compute the degree of $k$-limited similarity.}
		\label{alg:3}
		\KwIn{NFTS $\mathcal{S}=(V,\Sigma,\delta)$, $k\in\mathbb{N}$ and $u,v\in V$.}
		\KwOut{$\textbf{BIS}((u,v),k)$, the degree of $k$-limited similarity between two states $u$ and $v$.}
		\textbf{Initialize} arrays $T_i$ and $B_i$ to be zero, $i\in\mathbb{N}$ and $\epsilon$ be the minimum non-negative real number, $B(u,v)[i+1]=D_u[i]\times D_v[i]$, $0\leq i\leq k$, $T_1=\mathbf{Vec}((u,v),k,\epsilon,B(u,v))$, $\alpha_0=0$ and $j=0$\;
		\Repeat
		{$T_j[1]=\emptyset$}
		{
			$j=j+1$\;
			\If{$T_j[1]=\{(u,v)\}$}{
				$\alpha_{j}=\bigwedge_{1\leq i\leq k}\bigwedge_{r\in T_j[i]}\mathbf{Deg}(r,T_j[i+1])$\;
				$B_{j+1}=T_j-\cup_{1\leq i\leq k}\{r\in T_j[i]\mid\mathbf{Deg}(r,T_j[i+1])\leq\alpha_j\}$\;
		    	$T_{j+1}=\mathbf{Vec}((u,v),k,\alpha_j,B_{j+1})$\;}}
		\Return $\alpha_{j-1}$
	\end{algorithm}
	
	By the degree of 1-limited similarity with respect to a given relation, Algorithm \ref{alg:1} correctly computes the degree of 1-limited similarity between $s'$ and $t'$ with respect to $R$.
	
	Before presenting the correctness of computing the degree of similarity, some auxiliary lemma and proposition are provided.
	
	\begin{Prop}\label{Correctness alg:2}
		{Algorithm \ref{alg:2} correctly computes the greatest $k$-limited $\alpha$-bisimulation vector contained in $B(u,v)$.
		}
	\end{Prop}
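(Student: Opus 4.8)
The plan is to prove the correctness of Algorithm \ref{alg:2} by induction on the index $j$ as it descends from $k+1$ down to $1$, showing that at the end of each iteration the current value of $H(u,v)[j]$ equals the $(k-j+1)$-limited $\alpha$-bisimilarity contained in $B(u,v)[j]$. The base case is immediate: after initialization $H(u,v)[k+1]=B(u,v)[k+1]$, and since every relation is a $0$-limited $\alpha$-bisimulation by Definition \ref{Def k-limited bisimulation}(1), $H(u,v)[k+1]$ is trivially the greatest $0$-limited $\alpha$-bisimulation contained in $B(u,v)[k+1]$.

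For the inductive step, I would assume that after processing index $j+1$ the array $H(u,v)[j+1]$ is exactly the greatest $(k-j)$-limited $\alpha$-bisimilarity contained in $B(u,v)[j+1]$. The \textbf{for} loop of Steps 4--8 then removes from $H(u,v)[j]$ precisely those pairs $(u_j,v_j)$ for which $\mathbf{Deg}((u_j,v_j),H(u,v)[j+1])<\alpha$; by the correctness of Algorithm \ref{alg:1} (already asserted in the text), $\mathbf{Deg}((u_j,v_j),H(u,v)[j+1])$ equals $\bigwedge_{u_j\os{\tau}\lra p}\bigvee_{v_j\os{\tau}\lra q}\widetilde{S}(p,q,H(u,v)[j+1])\wedge\bigwedge_{v_j\os{\tau}\lra q}\bigvee_{u_j\os{\tau}\lra p}\widetilde{S}(p,q,H(u,v)[j+1])$, so a pair survives iff for every transition $u_j\os{\tau}\lra p$ there is a matching $v_j\os{\tau}\lra q$ with $(p,q)\in(H(u,v)[j+1])_{\alpha}^{\dag}$ and symmetrically. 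Two things must then be checked. First, the surviving set is a $(k-j+1)$-limited $\alpha$-bisimulation: since $H(u,v)[j+1]$ is a $(k-j)$-limited $\alpha$-bisimulation, we have $(H(u,v)[j+1])_{\alpha}^{\dag}\subseteq(\approx_{k-j}^{\alpha})_{\alpha}^{\dag}$ by Lemma \ref{Prop lifted relation}(2), and then Definition \ref{Def k-limited bisimulation}(2) applies directly to the surviving pairs. Second, maximality: any $(k-j+1)$-limited $\alpha$-bisimulation $R$ contained in $B(u,v)[j]$ must, when unrolled one step, land its successor distributions in some $(k-j)$-limited $\alpha$-bisimulation contained in $B(u,v)[j+1]$, hence — by the induction hypothesis that $H(u,v)[j+1]$ is the greatest such — inside $(H(u,v)[j+1])_{\alpha}^{\dag}$; therefore every pair of $R$ passes the $\mathbf{Deg}\geq\alpha$ test and is not removed, so $R\subseteq H(u,v)[j]$ at termination.

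One subtlety I would flag: Step 7 removes both $(u,v)$ and $(v,u)$, but the vector's $j$-th component lives in $D_u[j-1]\times D_v[j-1]$, an ordered product; I would read this (consistently with Definition \ref{Def k-limited bisimulation vector}'s symmetric conditions (1)--(2) and with Proposition \ref{Prop k-limited bisimulation}(4)) as removing the pair and, where meaningful, its inverse, and note that this does not affect the greatest-fixed-point argument because the defining conditions are themselves symmetric. I would also remark that the \textbf{repeat} loop over $j$ is executed exactly once per index (there is no re-examination of already-pruned levels), which is legitimate precisely because the dependence of level $j$ is only on the already-finalized level $j+1$ — the recursion in (\ref{sequences approx}) is well-founded from the top down — so a single downward sweep suffices and no outer fixed-point iteration is needed.

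The main obstacle I anticipate is the maximality half of the inductive step, specifically making rigorous the claim that an arbitrary $(k-j+1)$-limited $\alpha$-bisimulation $R\subseteq B(u,v)[j]$ forces its one-step successors into $H(u,v)[j+1]$ rather than merely into \emph{some} $(k-j)$-limited $\alpha$-bisimulation. This requires invoking that $k$-limited $\alpha$-bisimilarity is the union of all $k$-limited $\alpha$-bisimulations (the remark following Proposition \ref{Prop k-limited bisimulation}), together with the support-restriction identity $(p,q)\in R_\alpha^\dag \iff (p,q)\in((\mathrm{su}(p)\times\mathrm{su}(q))\cap R)_\alpha^\dag$ from the discussion after Definition \ref{relation lifting}, so that the successor distributions — whose supports sit inside $D_u[j]\times D_v[j]$ by construction of the $D$-sets — are certified by the restriction of $\approx_{k-j}^\alpha$ to $D_u[j]\times D_v[j]$, which the induction hypothesis identifies with $H(u,v)[j+1]$. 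Handling this carefully is the crux; the rest is bookkeeping with Definitions \ref{Def k-limited bisimulation} and \ref{Def k-limited bisimulation vector}.
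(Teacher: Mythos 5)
Your proposal follows essentially the same route as the paper's proof: a downward induction on the vector index, with the base case $H(u,v)[k+1]=B(u,v)[k+1]$, the surviving pairs shown to satisfy Definition \ref{Def k-limited bisimulation} via the correctness of $\mathbf{Deg}$ (Algorithm \ref{alg:1}), Definition \ref{relation lifting} and Lemma \ref{Prop lifted relation}(2), and maximality handled at each level through the induction hypothesis. If anything, your treatment of the maximality half (and your flagging of the $(u,v)$ versus $(u_j,v_j)$ slip in Step 7) is more explicit than the paper's rather terse closing argument, but the decomposition and the key lemmas invoked are the same.
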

	\begin{proof}
		Let $A=\mathbf{Vec}((u,v),k,\alpha,B(u,v))$ for convenience. By Algorithm \ref{alg:2}, $A[k+1]=H(u,v)[k+1]=B(u,v)[k+1]$, and so $A[k+1]$ is the 0-limited $\alpha$-bisimilarity contained in $B(u,v)[k+1]$. Under the condition that $A[i]$ is the $(k-i+1)$-limited $\alpha$-bisimilarity contained in $B(u,v)[i]$, where $2\leq i\leq k$, we now show that $A[i-1]$ is the $(k-i+2)$-limited $\alpha$-bisimilarity contained in $B(u,v)[i-1]$. For any $(u_{i-1},v_{i-1})\in A[i-1]$, by Steps 4--6, we have that $\mathbf{Deg}((u_{i-1},v_{i-1}),A[i])\geq\alpha$, i.e.,
		\begin{align*}
			\bigwedge_{u_{i-1}\os{\tau}\lra p}\bigvee_{v_{i-1}\os{\tau}\lra q}M((\tau,p),(\tau,q),A[i])\wedge\bigwedge_{v_{i-1}\os{\tau}\lra q}\bigvee_{u_{i-1}\os{\tau}\lra p}M((a,p),(\tau,q),A[i])\geq\alpha.
		\end{align*}
		The inequality $\bigwedge_{u_{i-1}\os{\tau}\lra p}\bigvee_{v_{i-1}\os{\tau}\lra q}M((\tau,p),(\tau,q),A[i])\geq\alpha$ means that $u_{i-1}\os{\tau}\lra p$ implies $v_{i-1}\os{\tau}\lra q$ such that $M((\tau,p),$ $(\tau,q),A[i])\geq\alpha$, i.e., $(p,q)\in(A[i])_{\alpha}^{\dag}$ by Definition \ref{relation lifting} and Algorithm \ref{alg:1}. In the similar way, $\bigwedge_{v_{i-1}\os{\tau}\lra q}\bigvee_{u_{i-1}\os{\tau}\lra p}M((\tau,p),$ $(\tau,q),$ $A[i])\geq\alpha$ means that for any $v_{i-1}\os{\tau}\lra q$, there exists $u_{i-1}\os{\tau}\lra p$ such that $(p,q)\in (A[i])_{\alpha}^{\dag}$. Hence $A[i-1]$ is a $(k-i+2)$-limited $\alpha$-bisimulation. Since $A[i]$ is the $(k-i+1)$-limited $\alpha$-bisimilarity contained in $B(u,v)[i]$, $A[i]$ is the ($k-i$)-limited $\alpha$-bisimilarity contained in $B(u,v)[i]$, $A[i-1]$ is the ($k-i+2$)-limited $\alpha$-bisimilarity contained in $B(u,v)[i-1]$ by Lemma \ref{Prop lifted relation} (2) and Definition \ref{Def k-limited bisimulation}. By Definition \ref{Def k-limited bisimulation vector}, $\mathbf{Vec}((u,v),k,\alpha,B(u,v))$ is the $k$-limited $\alpha$-bisimilarity vector contained in $B(u,v)$.
	\end{proof}
	
	\begin{Lem}\label{algorithm 3 maximum limited bisimilarity}
		{ Let $\{\alpha_j\}_{0\leq j\leq m}$ be the sequence of elements in [0,1] derived from Algorithm \ref{alg:3}, where $\alpha_{0}=\epsilon$ and $\mathbf{BIS}((u,v),k)=\alpha_m$. If $\alpha_g<\alpha_{g+1}$ for some $0\leq g\leq m-1$, then $T_{g+1}=\mathbf{Vec}((u,v),k,\alpha_g,B_{g+1})$ is the greatest $k$-limited $\alpha_{g+1}$-bisimulation vector contained in $B(u,v)$.
		}
	\end{Lem}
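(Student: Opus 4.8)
The plan is to argue that $T_{g+1}$, which Algorithm \ref{alg:2} returns as the greatest $k$-limited $\alpha_g$-bisimulation vector contained in $B_{g+1}$, is in fact already a $k$-limited $\alpha_{g+1}$-bisimulation vector, and moreover the greatest one contained in $B(u,v)$. By Proposition \ref{Correctness alg:2}, $T_{g+1}=\mathbf{Vec}((u,v),k,\alpha_g,B_{g+1})$ is the greatest $k$-limited $\alpha_g$-bisimulation vector contained in $B_{g+1}$. First I would apply Step 5 of Algorithm \ref{alg:3}, which sets $\alpha_{g+1}=\bigwedge_{1\leq i\leq k}\bigwedge_{r\in T_{g+1}[i]}\mathbf{Deg}(r,T_{g+1}[i+1])$; since $\mathbf{Deg}(r,T_{g+1}[i+1])$ equals the degree of $1$-limited similarity with respect to $T_{g+1}[i+1]$, this infimum is precisely $Q(T_{g+1})$ in the notation of Proposition \ref{Degree of bismilation vector}. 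Hence by Proposition \ref{Degree of bismilation vector}, $T_{g+1}$ is a $k$-limited $\alpha_{g+1}$-bisimulation vector.

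Next I would establish maximality. Let $C$ be any $k$-limited $\alpha_{g+1}$-bisimulation vector contained in $B(u,v)$; I must show $C[i]\subseteq T_{g+1}[i]$ for all $1\leq i\leq k+1$. Since $\alpha_g<\alpha_{g+1}$ (in fact $\alpha_g\leq\alpha_{g+1}$ suffices here), $C$ is also a $k$-limited $\alpha_g$-bisimulation vector by Proposition \ref{Degree of bismilation vector} (the condition $\alpha\leq Q(C)$ is preserved under lowering $\alpha$). The subtle point is that the greatest $k$-limited $\alpha_g$-bisimulation vector is $H_k^{\alpha_g}(u,v)$ contained in $B(u,v)$, not necessarily the one contained in the smaller $B_{g+1}$; so I would argue $C\subseteq B_{g+1}$ componentwise. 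By Step 6 of Algorithm \ref{alg:3}, $B_{g+1}[i]$ is obtained from $T_g[i]$ by deleting exactly those $r$ with $\mathbf{Deg}(r,T_g[i+1])\leq\alpha_g$; a pair $r\in C[i]$ satisfies $\mathbf{Deg}(r,C[i+1])\geq\alpha_{g+1}>\alpha_g$, and since $C[i+1]\subseteq T_g[i+1]$ (by induction, $C$ is contained in the greatest $k$-limited $\alpha_g$-bisimulation vector, and $T_g$ was shown at the previous stage to be that object — or one can invoke that $C\subseteq T_g$ directly), monotonicity of $\mathbf{Deg}$ in its second argument gives $\mathbf{Deg}(r,T_g[i+1])\geq\mathbf{Deg}(r,C[i+1])>\alpha_g$, so $r$ survives into $B_{g+1}[i]$. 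Thus $C\subseteq B_{g+1}$, and since $C$ is a $k$-limited $\alpha_g$-bisimulation vector contained in $B_{g+1}$, the maximality clause of Proposition \ref{Correctness alg:2} yields $C\subseteq T_{g+1}$.

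Finally I would assemble: $T_{g+1}$ is a $k$-limited $\alpha_{g+1}$-bisimulation vector contained in $B(u,v)$ (since $B_{g+1}\subseteq T_g\subseteq B(u,v)$), and it contains every such vector, hence it is the greatest $k$-limited $\alpha_{g+1}$-bisimulation vector contained in $B(u,v)$.

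I expect the main obstacle to be the maximality step, specifically pinning down that an arbitrary $k$-limited $\alpha_{g+1}$-bisimulation vector $C$ inside $B(u,v)$ is actually contained in the truncated input $B_{g+1}$ rather than merely in $B(u,v)$; this is where one must carefully track the deletion rule in Step 6 of Algorithm \ref{alg:3} together with the strict inequality $\alpha_g<\alpha_{g+1}$ and the monotonicity of $\mathbf{Deg}$ (equivalently, of $\widetilde{S}$ and hence of $(\cdot)^{\dag}_{\alpha}$ via Lemma \ref{Prop lifted relation} (2)) in its relational argument. The easy parts — identifying $\alpha_{g+1}$ with $Q(T_{g+1})$ and invoking Proposition \ref{Degree of bismilation vector} — are essentially bookkeeping.
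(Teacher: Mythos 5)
Your first half (identifying $\alpha_{g+1}=Q(T_{g+1})$ and invoking Proposition \ref{Degree of bismilation vector} to see that $T_{g+1}$ is itself a $k$-limited $\alpha_{g+1}$-bisimulation vector) matches the paper and is fine. The gap is in the maximality step, exactly where you flagged the difficulty: to show $C\subseteq B_{g+1}$ you need $C\subseteq T_g$, and your justification for that containment does not work. The lemma you are proving (and hence any induction on it) only describes $T_{j+1}$ at indices where the value strictly increases, whereas the index $g$ in the hypothesis $\alpha_g<\alpha_{g+1}$ may sit at the end of a plateau $\alpha_{l}=\alpha_{l+1}=\cdots=\alpha_g$ with $\alpha_{l-1}<\alpha_l$. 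In that situation $T_g$ is \emph{not} the greatest $k$-limited $\alpha_g$-bisimulation vector contained in $B(u,v)$: that greatest vector is $T_l$, and Step 6 of Algorithm \ref{alg:3} deletes at each plateau step the pairs $r$ with $\mathbf{Deg}(r,T_j[i+1])\leq\alpha_j$ (the minimum is attained, so something is always deleted), so $T_{l+1},\dots,T_g$ are strictly smaller than $T_l$ even though $Q(T_j)$ stays equal to $\alpha_g$. Hence ``$T_g$ was shown at the previous stage to be that object'' is false in general, and ``one can invoke that $C\subseteq T_g$ directly'' is not direct at all --- it is precisely the statement that $C$ survives every truncation performed between the previous strict increase and step $g$, which is the part that needs proof.

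The repair is to iterate your own survival argument over \emph{all} earlier steps rather than one: prove by induction on $j$ the invariant that every $k$-limited $\beta$-bisimulation vector $C\subseteq B(u,v)$ with $\beta>\alpha_j$ satisfies $C\subseteq T_{j+1}$ (base case $j=0$ from $T_1=\mathbf{Vec}((u,v),k,\epsilon,B(u,v))$; inductive step: $\beta>\alpha_j\geq\alpha_{j-1}$ gives $C\subseteq T_j$, then monotonicity of $\mathbf{Deg}$ in its relational argument, via Lemma \ref{Prop lifted relation} (2), gives $\mathbf{Deg}(r,T_j[i+1])\geq\mathbf{Deg}(r,C[i+1])\geq\beta>\alpha_j$ for $r\in C[i]$, so $C\subseteq B_{j+1}$, and Proposition \ref{Correctness alg:2} gives $C\subseteq T_{j+1}$). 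Applying this with $\beta=\alpha_{g+1}$ at $j=g$ yields the lemma. This is essentially what the paper does, only organized differently: it runs an outer induction over the strict-increase indices $\alpha_{l_0}<\alpha_{l_1}<\cdots$ together with an inner induction that pushes $\mathbf{Vec}((u,v),k,\alpha_{l_{g+1}},B(u,v))$ through the plateau $T_{l_g},T_{l_g+1},\dots$ into $B_{l_{g+1}}$, using the same strict-inequality/monotonicity survival step you sketched for a single iteration.
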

	\begin{proof}
		We start with the claim: for the sequence $\{\alpha_j\}_{0\leq j\leq m}$, $\alpha_j\leq\alpha_{j+1}$ holds, $0\leq j\leq m$.
		
		By Proposition \ref{Correctness alg:2}, $T_{j+1}$ is a $k$-limited $\alpha_j$-bisimulation vector contained in $B(u,v)$, $0\leq j\leq m-1$. Since $\alpha_{j+1}=\bigwedge_{1\leq i\leq k}\bigwedge_{r\in T_{j+1}[i]}$ $\mathbf{Deg}(r,T_{j+1}[i+1])=Q(T_{j+1})$, by Proposition \ref{Degree of bismilation vector}, $\alpha_j\leq\alpha_{j+1}$. The claim holds.
		
		Let
		\begin{align*}
			&\alpha_{l_0}<\alpha_1=\cdots=\alpha_{l_1-1}<\alpha_{l_1}=\alpha_{l_1+1}=\cdots\alpha_{l_2-1}<\alpha_{l_2}\cdots\alpha_{l_{n}-1}\leq\alpha_{l_n}=\alpha_{l_n+1}=\cdots=\alpha_m,
		\end{align*}
		where $l_0=0$. We now prove that $T_{l_i}$ is the $k$-limited $\alpha_{l_i}$-bisimilarity vector contained in $B(u,v)$, $0\leq i\leq n$. Since $T_{l_0}=\mathbf{Vec}((u,v),k,\alpha_{0},B(u,v))$, by Proposition \ref{Correctness alg:2}, $T_{l_0}$ is the $k$-limited $\alpha_{0}$-bisimilarity vector contained in $B(u,v)$. By Proposition \ref{Degree of bismilation vector} and $\alpha_{l_0}=\bigwedge_{1\leq i\leq k}\bigwedge_{r\in T_{l_0}[i]}\mathbf{Deg}(r,T_{l_0}[i+1])=Q(T_{l_0})$, $T_{l_0}$ is the $k$-limited $\alpha_{l_0}$-bisimilarity vector contained in $B(u,v)$. Assume that $T_{l_p}$ is the $k$-limited $\alpha_{l_{p}}$-bisimilarity vector contained in $B(u,v)$, i.e., $T_{l_{g}}=\mathbf{Vec}((u,v),k,\alpha_{l_g},B(u,v))$, $0\leq g\leq n-1$. We now show that $T_{l_{g+1}}$ is the $k$-limited $\alpha_{l_{g+1}}$-bisimilarity vector contained in $B(u,v)$, i.e., $T_{l_{g+1}}=\mathbf{Vec}((u,v),k,\alpha_{l_{g+1}},B(u,v))$. Let $T_{l_{g+1}}'=\mathbf{Vec}((u,v),k,\alpha_{l_{g+1}},B(u,v))$. By Propositions \ref{Degree of bismilation vector} and \ref{Correctness alg:2}, we know that
		\begin{align}\label{subseteq1}
			\begin{split}
				T_{l_{g+1}}=&\mathbf{Vec}((u,v),k,\alpha_{l_{g+1}-1},B_{l_{g+1}}(u,v))\\
				=&\mathbf{Vec}((u,v),k,\alpha_{l_{g+1}},B_{l_{g+1}}(u,v))\\
				\subseteq&\mathbf{Vec}((u,v),k,\alpha_{l_{g+1}},B(u,v))\\
				=&T_{l_{g+1}}'.
			\end{split}
		\end{align}
		By the known condition that $T_{l_p}$ is the $k$-limited $\alpha_{l_{p}}$-bisimilarity vector contained in $B(u,v)$ and the claim,
		\begin{align*}
			T_{l_{g+1}}'=&\mathbf{Vec}((u,v),k,\alpha_{l_{g+1}},B(u,v))\\
			\subseteq&\mathbf{Vec}((u,v),k,\alpha_{l_{g}},B(u,v))\\
			=&T_{l_{g}}.
		\end{align*}
		Then $T_{l_{g+1}}'$ is the $k$-limited $\alpha_{l_{g+1}}$-bisimilarity vector contained in $T_{l_{g}}$, i.e., $T_{l_{g+1}}'=\mathbf{Vec}((u,v),k,\alpha_{l_{g+1}},T_{l_g})$. Assume that $T_{l_{g+1}}'=\mathbf{Vec}((u,v),k,\alpha_{l_{g+1}},T_{l_g+h})$, $0\leq h\leq l_{g+1}-l_{g}-1$. Then we prove that $T_{l_{g+1}}'=\mathbf{Vec}((u,v),k,\alpha_{l_{g+1}},T_{l_g+h+1})$.
		
		By Definition \ref{Def k-limited bisimulation vector},
		\begin{align*}
			T_{l_{g+1}}'=&\mathbf{Vec}((u,v),k,\alpha_{l_{g+1}},T_{l_{g}+h})\\
			\subseteq&T_{l_{g}+h}-\cup_{1\leq i\leq k}\{r\in T_{l_{g}+h}[i]\mid\mathbf{Deg}(r,T_{l_g+h}[i+1])\leq\alpha_{l_{g}+h}\}\\
			=&B_{l_{g}+h+1}.
		\end{align*}
		Then $T_{l_{g+1}}'$ is also the $k$-limited $\alpha_{l_{g+1}}$-bisimilarity vector contained in $B_{l_g+g+1}$, i.e., $T_{l_{g+1}}'=\mathbf{Vec}((u,v),k,\alpha_{l_{g+1}},$ $B_{l_g+h+1})$. Hence  $T_{l_{g+1}}'=\mathbf{Vec}((u,v),k,\alpha_{l_{g+1}},$ $B_{l_{g+1}})$.
		
		Since $\alpha_{l_g}<\alpha_{l_{g+1}}=\alpha_{(l_{g+1}-1)}$, by Proposition \ref{Prop k-limited bisimulation},
		\begin{align}\label{subseteq2}
			\begin{split}
				T_{l_{g+1}}'=&\mathbf{Vec}((u,v),k,\alpha_{l_{g+1}},B_{l_{g+1}})\\
				\subseteq&\mathbf{Vec}((u,v),k,\alpha_{(l_{g+1}-1)},B_{l_{g+1}})\\
				=&T_{l_{g+1}}.
			\end{split}
		\end{align}
		Combining (\ref{subseteq1}) and (\ref{subseteq2}), we get that $T_{l_{g+1}}'= T_{l_{g+1}}$. By Proposition \ref{Correctness alg:2}, $T_{l_{g+1}}$ is the $k$-limited $\alpha_{l_{g+1}}$-bisimilarity vector contained in $B(u,v)$.
	\end{proof}
	
 With the following proof, Algorithm \ref{alg:3} is correct.  
	\begin{Thm}\label{correctness alg:3}
		{Let $\mathcal{S}=(V,\Sigma,\delta)$ be an NFTS, $u,v\in V$, $k\in\mathbb{N}$ and $\alpha\in[0,1]$. Algorithm \ref{alg:3} correctly computes the degree of $k$-limited similarity between $u$ and $v$.
		}
	\end{Thm}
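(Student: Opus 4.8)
The plan is to identify the non-decreasing sequence $\epsilon=\alpha_0\le\alpha_1\le\cdots\le\alpha_m$ of candidate degrees produced by Algorithm \ref{alg:3}, together with the associated chain of truncated universes $B_k(u,v)=B_1\supseteq B_2\supseteq\cdots$, and to show that the last value $\alpha_m$ reached before the first component of the computed vector becomes empty is exactly $\max\{\alpha\mid u\approx_k^\alpha v\}$, i.e.\ the degree of $k$-limited similarity. Throughout I use that $B(u,v)$ as initialized in Step~1 of Algorithm \ref{alg:3} is precisely $B_k(u,v)$, so by Proposition \ref{Correctness alg:2} together with Theorem \ref{Thm iff limited bisimilarity} one has $\mathbf{Vec}((u,v),k,\alpha,B(u,v))=H_k^\alpha(u,v)$, the greatest $k$-limited $\alpha$-bisimulation vector contained in $B_k(u,v)$, for every $\alpha$; in particular $T_1=H_k^\epsilon(u,v)$. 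I also record that $\mathbf{Deg}(r,\cdot)$ is monotone in its relation argument: this is immediate from Definition \ref{relation lifting} and Lemma \ref{Prop lifted relation}(2), since $R_1\subseteq R_2$ forces $\widetilde{S}(p,q,R_1)\le\widetilde{S}(p,q,R_2)$, and $\mathbf{Deg}$ is built from $\widetilde{S}$ by meets and joins.

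First I would settle termination. Whenever the loop enters the if-branch at index $j$ we have $T_j[1]=\{(u,v)\}\neq\emptyset$, so the meet defining $\alpha_j=Q(T_j)$ in Step~5 ranges over a nonempty finite family and is attained; the attaining pair is deleted in Step~6, hence $B_{j+1}\subsetneq T_j$, and since $\mathbf{Vec}$ only deletes elements, $T_{j+1}\subseteq B_{j+1}$. Thus $\sum_i|T_j[i]|$ strictly decreases along successful iterations, and finiteness of $B_k(u,v)$ forces the loop to halt; let $m$ be the largest index with $T_m[1]=\{(u,v)\}$, so the output is $\alpha_m$. (If already $T_1[1]=\emptyset$, then $u\not\approx_k^0v$, no $\alpha$ satisfies $u\approx_k^\alpha v$, and the output $\epsilon=0$ is to be read as recording this degenerate case; I treat only the nondegenerate case below.)

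Next, $\alpha_m$ is attained: since $T_m[1]=\{(u,v)\}$ and $\alpha_m=Q(T_m)$ by Step~5, Proposition \ref{Degree of bismilation vector} makes $T_m$ a $k$-limited $\alpha_m$-bisimulation vector, whence the remark following Definition \ref{Def k-limited bisimulation vector} (case $i=k$) gives $u\approx_k^{\alpha_m}v$. For the converse inequality, fix $\beta>\alpha_m$; I claim $H_k^\beta(u,v)\subseteq T_j$ for every computed index $j\le m+1$, by induction on $j$. Base case: $H_k^\beta(u,v)$ is a $k$-limited $\beta$-bisimulation vector contained in $B_k(u,v)$ (Theorem \ref{Thm iff limited bisimilarity}), hence (as $\epsilon\le\beta$, Proposition \ref{Degree of bismilation vector}) a $k$-limited $\epsilon$-bisimulation vector contained in $B_k(u,v)$, hence $\subseteq$ the greatest such, namely $T_1$. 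Inductive step: assume $H_k^\beta(u,v)\subseteq T_j$ with $j\le m$, so $\beta>\alpha_m\ge\alpha_j$ (the $\alpha_j$ are non-decreasing --- the claim inside the proof of Lemma \ref{algorithm 3 maximum limited bisimilarity}). For $r\in H_k^\beta(u,v)[i]$ with $1\le i\le k$, the defining property in (\ref{sequences approx}) gives $\mathbf{Deg}(r,H_k^\beta(u,v)[i+1])\ge\beta$, so by monotonicity of $\mathbf{Deg}$ and $H_k^\beta(u,v)[i+1]\subseteq T_j[i+1]$ we get $\mathbf{Deg}(r,T_j[i+1])\ge\beta>\alpha_j$; thus $r$ is not deleted in Step~6 and $r\in B_{j+1}[i]$. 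Together with $H_k^\beta(u,v)[k+1]=D_u[k]\times D_v[k]=T_j[k+1]=B_{j+1}[k+1]$ this yields $H_k^\beta(u,v)\subseteq B_{j+1}$, so $H_k^\beta(u,v)$ is a $k$-limited $\alpha_j$-bisimulation vector contained in $B_{j+1}$ and therefore $\subseteq\mathbf{Vec}((u,v),k,\alpha_j,B_{j+1})=T_{j+1}$. Taking $j=m+1$ and using the loop's exit condition $T_{m+1}[1]=\emptyset$ gives $H_k^\beta(u,v)[1]=\emptyset$, i.e.\ $u\not\approx_k^\beta v$ by Theorem \ref{Thm iff limited bisimilarity}. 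Hence $\alpha_m=\max\{\beta\mid u\approx_k^\beta v\}$, the degree of $k$-limited similarity, which is exactly what Algorithm \ref{alg:3} returns.

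The crux is the inductive containment $H_k^\beta(u,v)\subseteq T_j$: one must simultaneously control the shrinking of the truncated universe $B_{j+1}$ (the threshold cut at $\alpha_j$ performed in Step~6) and the further pruning carried out by $\mathbf{Vec}$, and verify that no pair belonging to the genuine $\beta$-bisimilarity vector is ever removed --- this is precisely where monotonicity of $\mathbf{Deg}$ in its relation argument and the non-decreasingness of $(\alpha_j)$ are doing the work. By contrast, termination and the attainment of $\alpha_m$ are comparatively routine.
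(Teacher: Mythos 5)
Your argument is correct, and its first half (attainment of $\alpha_m$: $T_m[1]=\{(u,v)\}$ and $\alpha_m=Q(T_m)$ make $T_m$ a $k$-limited $\alpha_m$-bisimulation vector, hence $u\approx_k^{\alpha_m}v$) is essentially the paper's sufficiency step. The upper-bound half, however, follows a genuinely different and leaner route. The paper first establishes Lemma \ref{algorithm 3 maximum limited bisimilarity} (that at each strict increase point $T_{l_i}=\mathbf{Vec}((u,v),k,\alpha_{l_i},B(u,v))$), then argues by contradiction: assuming $u\approx_k^{\alpha}v$ with $\alpha>\alpha_m$, it enters the chain at $T_{l_n}=\mathbf{Vec}((u,v),k,\alpha_m,B(u,v))$ and pushes $\mathbf{Vec}((u,v),k,\alpha,B(u,v))$ through $T_{l_n+g}$, $B_{m+1}$ and $T_{m+1}$ to contradict $T_{m+1}[1]\neq\{(u,v)\}$. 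You instead run a single induction from $j=1$ to $j=m$ on the containment $H_k^\beta(u,v)\subseteq T_j$, checking the Step-6 deletion test pairwise: each $r\in H_k^\beta(u,v)[i]$ has $\mathbf{Deg}(r,H_k^\beta(u,v)[i+1])\geq\beta$, so by monotonicity of $\mathbf{Deg}$ in its relation argument (Lemma \ref{Prop lifted relation}(2)) and $\beta>\alpha_m\geq\alpha_j$ it survives into $B_{j+1}$, whence $H_k^\beta(u,v)\subseteq\mathbf{Vec}((u,v),k,\alpha_j,B_{j+1})=T_{j+1}$ by Proposition \ref{Correctness alg:2}; at $j=m$ this forces $H_k^\beta(u,v)[1]=\emptyset$, contradicting Theorem \ref{Thm iff limited bisimilarity}. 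This bypasses Lemma \ref{algorithm 3 maximum limited bisimilarity} entirely except for its one-line internal claim that $(\alpha_j)$ is non-decreasing, and it also supplies two points the paper leaves implicit: termination of the loop of Algorithm \ref{alg:3} (a pair attaining the minimum in Step 5 is removed each round) and the degenerate case $T_1[1]=\emptyset$. Two minor things to make explicit in a write-up: the inductive step requires Steps 5--7 to actually execute at every $j\leq m$, i.e.\ $T_j[1]=\{(u,v)\}$ for all such $j$, which follows from the nesting $T_{j+1}\subseteq B_{j+1}\subseteq T_j$; and the identification $T_1=H_k^\epsilon(u,v)$ uses that $H_k^\alpha(u,v)$ is the \emph{greatest} $k$-limited $\alpha$-bisimulation vector in $B_k(u,v)$, which the paper states only as ``a'' such vector---though your argument never actually needs this identity, only the containments provided by Proposition \ref{Correctness alg:2}.
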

	\begin{proof}
		Supposing $\mathbf{BIS}((u,v),k)=\alpha_m$, $m\in\mathbb{N}$. Then we have that $T_{m+1}[1]\neq\{(u,v)\}$. Proving that $u\approx_k^{\alpha}v$ iff $\alpha\leq\alpha_m$ is sufficient to prove the statement.
		
		(Sufficiency.) By Algorithm \ref{alg:3}, $T_m[1]=\{(u,v)\}$, and so
		\begin{align*}
			\alpha_m=\bigwedge_{1\leq i\leq k}\bigwedge_{r\in T_m[i]}\mathbf{Deg}(r,T_m[i+1]).
		\end{align*}
		By Proposition \ref{Degree of bismilation vector}, $T_m$ is a $k$-limited $\alpha_m$-bisimulation vector contained in $B_m$, which implies that
		%%%\begin{align*}
		$T_m\subseteq$ $\mathbf{Vec}((u,v),k,\alpha_{m},$ $B_m)$
		%%%\end{align*}
		by Proposition \ref{Correctness alg:2}. Hence
		%%%\begin{align*}
		$\mathbf{Vec}((u,v),k,\alpha_{m},B_m)[1]=\{(u,v)\}$.
		%%%\end{align*}
		By Theorem \ref{Thm iff limited bisimilarity}, $u\approx_k^{\alpha_m}v$. We see that $u\approx_k^{\alpha}v$ from $\alpha\leq\alpha_m$ and Proposition \ref{Prop k-limited bisimulation} (1). The sufficiency holds.
		
		(Necessity.) Assume that $u\approx_k^{\alpha}v$ for some $\alpha>\alpha_m$. Since $u\approx_k^{\alpha}v$, by Theorem \ref{Thm iff limited bisimilarity},
		\begin{align}\label{Vec alpha}
			\mathbf{Vec}((u,v),k,\alpha,B(u,v))[1]=\{(u,v)\}.
		\end{align}
		From $\alpha>\alpha_{m}$, Definition \ref{Def k-limited bisimulation vector} and Proposition \ref{Prop k-limited bisimulation}, we get that
		\begin{align*}
			\mathbf{Vec}((u,v),k,\alpha,B(u,v))\subseteq\mathbf{Vec}((u,v),k,\alpha_m,B(u,v)).
		\end{align*}
		Let
		\begin{align*}
			&\epsilon=\alpha_{l_0}<\alpha_1=\cdots=\alpha_{l_1-1}<\alpha_{l_1}=\alpha_{l_1+1}=\cdots=\alpha_{l_2-1}<\alpha_{l_2}\cdots\alpha_{l_{n}-1}\leq\alpha_{l_n}=\alpha_{l_n+1}=\cdots=\alpha_m,
		\end{align*}
		where $l_0=0$. By Lemma \ref{algorithm 3 maximum limited bisimilarity},
		\begin{align*}
			\mathbf{Vec}((u,v),k,\alpha,B(u,v))\subseteq&\mathbf{Vec}((u,v),k,\alpha_m,B(u,v))\\
			=&\mathbf{Vec}((u,v),k,\alpha_{l_{n}},B(u,v))\\
			=&T_{l_n}.
		\end{align*}
		Assume that $\mathbf{Vec}((u,v),k,\alpha,B(u,v))\subseteq T_{l_{n}+g}$, $0\leq g\leq m-l_n-1$. We show that $\mathbf{Vec}((u,v),k,\alpha,B(u,v))\subseteq T_{l_{n}+g+1}$. We have
		%%%By Definition \ref{Def k-limited bisimulation vector} and Algorithm \ref{alg:1},
		\begin{align*}
			\mathbf{Vec}((u,v),k,\alpha,B(u,v))
			&=\mathbf{Vec}((u,v),k,\alpha,T_{l_{n}+g}) \quad (\mbox{by Definition \ref{Def k-limited bisimulation vector}})\\
			&\subseteq T_{l_{n}+g}-\cup_{1\leq i\leq k}\{r\in T_{l_{n}+g}[i]\mid\mathbf{Deg}(r,T_{l_{n}+g}[i+1])\leq\alpha_{l_{n}+g}\}\\
			&=B_{l_{n}+g+1}\quad (\mbox{by Algorithm \ref{alg:3}}),
		\end{align*}
		and so,
		\begin{align*}
			\mathbf{Vec}((u,v),k,\alpha,B(u,v))
			&=\mathbf{Vec}((u,v),k,\alpha,B_{l_{n}+g+1}) \quad (\mbox{by Definition \ref{Def k-limited bisimulation vector}})\\
			&\subseteq\mathbf{Vec}((u,v),k,\alpha_{l_n+g},B_{l_{n}+g+1}) \quad (\mbox{by Proposition \ref{Prop k-limited bisimulation}})\\
			&=T_{l_{n}+g+1} \quad (\mbox{by Algorithm \ref{alg:3}}).
		\end{align*}
		Hence $\mathbf{Vec}((u,v),k,\alpha,B(u,v))\subseteq T_{m}$, and further, we have
		\begin{align*}
			\mathbf{Vec}((u,v),k,\alpha,B(u,v))
			&=\mathbf{Vec}((u,v),k,\alpha,T_{m})\\
			&\subseteq T_{m}-\cup_{0\leq i\leq k-1}\{r\in T_{m}[i]\mid\mathbf{Deg}(r,T_{m}[i+1])\leq\alpha_{m}\}\\
			&=B_{m+1}.
		\end{align*}
		By Definition \ref{Def k-limited bisimulation vector} again,
		\begin{align*}
			\mathbf{Vec}((u,v),k,\alpha,B(u,v))=&\mathbf{Vec}((u,v),k,\alpha,B_{m+1})\\
			\subseteq&\mathbf{Vec}((u,v),k,\alpha_m,B_{m+1})\\
			=&T_{m+1}.
		\end{align*}
		By (\ref{Vec alpha}), we obtain that
		%%%\begin{align*}
		$\{(u,v)\}=\mathbf{Vec}((u,v),k,\alpha,B(u,v))[1]$ $\subseteq T_{m+1}[1]\subseteq\{(u,v)\}$,
		%%%\end{align*}
		i.e., $T_{m+1}[1]=\{(u,v)\}$. This contradicts $T_{m+1}[1]\neq\{(u,v)\}$. Therefore, if $u\approx_k^{\alpha}v$, then $\alpha\leq\alpha_m$. We complete the proof.
	\end{proof}
	
	We now investigate the time complexities of Algorithms \ref{alg:1}, \ref{alg:2} and \ref{alg:3}. Let $\left|\lra\right|=\bigvee_{u\in V}|\{p\in\mathcal{R}(V)\mid u\os{\tau}\lra p\}|$. 
	
	{\it Complexity analysis}\quad In Algorithm \ref{alg:1}, Steps 4--6 take $|V|^2$. Steps 3--7 form an inner loop and the loop repeats $\left|\lra\right|$, and Steps 2--7 form an outer loop and the loop also repeats $\left|\lra\right|$. In addition, Step 7 costs $|V|$. Hence Steps 2--7 costs $(|V|^2+|V|)\cdot\left|\lra\right|^2$. Similarly, Steps 8--13 costs $(|V|^2+|V|)\cdot\left|\lra\right|^2$.
	Therefore, Algorithm \ref{alg:1} costs $O(2|V|^2\cdot\left|\lra\right|^2)$.
	
	For Algorithm \ref{alg:2}, Steps 5 and 6 cost $O(2|V|^2\cdot\left|\lra\right|^2)$ by Algorithm \ref{alg:1}. $B(u,v)[i]$ contains at most $|V|^2$ elements, $1\leq i\leq k+1$, then the inner loop at Steps 4--6 repeats $|V|^2$. The loop at Steps 2--7 repeats $k$, and hence Algorithm \ref{alg:2} is in $O(2k|V|^4\cdot\left|\lra\right|^2)$.
	
	In Algorithm \ref{alg:3}, since $T_j[i]$ has at most $|V|^2$ elements, by Algorithm \ref{alg:1}, Step 5 is in $O(2k\cdot|V|^4\cdot\left|\lra\right|^2))$, and Step 6 also costs $O(2k|V|^4\cdot\left|\lra\right|^2))$. By Algorithm \ref{alg:2}, Step 7 costs $O(2k|V|^4\cdot\left|\lra\right|^2)$. Since $|V|$ is finite, $|T_0[i]|\leq|V|^2$, $1\leq i\leq k$. Then Steps 2--8 forms a loop and the loop repeats at most $k|V|^2$. Hence Algorithm \ref{alg:3} costs $O(2k^2|V|^6\cdot\left|\lra\right|^2)$.
	
	An example is given to illustrate the algorithm Limited similarity.
	\begin{Eg}
		{\rm Let $\mathcal{S}=(V,\Sigma,\delta)$ be the NFTS presented in Fig. \ref{Figure2}, $\mathcal{L}$ be a G\"{o}del algebra, $k=3$. Let $u_0=u$ and $v_0=v$. By Algorithms \ref{alg:1} and \ref{alg:2},
			\begin{align*}
				&T_1[4]=B(u,v)[3]=D_u[3]\times D_t[3]=\{(u_{23},v_{53})\},\\
				&T_1[3]=\{(u_{32},v_{32}),(u_{42},v_{42})\},\\
				&T_1[2]=\{(u_{11},v_{11}),(u_{21},v_{21})\},\\
				&T_1[1]=\{(u_{00},v_{00})\},
			\end{align*}
			and so $T_1=(\{(u_{00},v_{00})\},\{(u_{11},v_{11}),u_{21},v_{21})\},\{(u_{32},v_{32}),$ $(u_{42},v_{42})\},\{(u_{23},v_{53})\})$. By Algorithm \ref{alg:1}, we obtain that
			\begin{align*}
				\alpha_1=&0.3,\\
				B_{2}=&T_1-\cup_{1\leq i\leq 3}\{r\in T_1[i]\mid\mathbf{Deg}(r,T_1[1]\leq\alpha_1)\}\\
				=&(\{(u_{00},t_{00})\},\{(u_{11},v_{11}),(u_{21},v_{21})\},\{(u_{42},v_{42})\}),
			\end{align*}
			By Algorithm \ref{alg:2}, $T_2=\mathbf{\emptyset}$. By Theorem \ref{correctness alg:3}, $\mathbf{BIS}((u,v),3)=\alpha_1=0.3$.
		}
	\end{Eg}
	
	\section{Logical characterization}
	The relationship between $k$-limited bisimulation and $\alpha$-bisimulation under $\widetilde{S}$ is looked into, and then the limited bisimulation is characterized resorting to a suitable modal logic.
	
	\subsection{Relationship with $\alpha$-bisimulation under $\widetilde{S}$}
	
	When determining the $k$-limited $\alpha$-bisimilarity between two states, it is sufficient to investigate the $\alpha$-bisimilarity under $\widetilde{S}$ between them in their $k$-neighbouring subsystems (subgraphs).
	\begin{Prop}\label{Prop s sim t}
		{Let $\mathcal{S}=(V,\Sigma,\delta)$ be an NFTS, $u,v\in V$, $k\in\mathbb{N}$ and $\alpha\in[0,1]$. If $u\sim^{\alpha}v$ in $\mathcal{S}(u,k)$ and $\mathcal{S}(v,k)$, then $u\approx_k^{\alpha}v$.
		}
	\end{Prop}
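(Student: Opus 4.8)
The plan is to argue by induction on $k$. For $k=0$ there is nothing to prove, since $u\approx_0^{\alpha}v$ holds for every pair of states by Definition \ref{Def k-limited bisimulation}. For the inductive step, assume the statement holds for $k$ and for every pair of states, and suppose $u\sim^{\alpha}v$ in $\mathcal{S}(u,k+1)$ and $\mathcal{S}(v,k+1)$; fix an $\alpha$-bisimulation $R$ under $\widetilde{S}$ between these two $(k+1)$-neighbouring subsystems with $(u,v)\in R$. I want to verify the two clauses of Definition \ref{Def k-limited bisimulation} for $u\approx_{k+1}^{\alpha}v$. So let $u\os{\tau}\lra p$ be any fuzzy transition of $\mathcal{S}$. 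As $k+1\geq 1$, this transition still occurs in $\mathcal{S}(u,k+1)$, so $R$ supplies a transition $v\os{\tau}\lra q$ with $(p,q)\in R_{\alpha}^{\dag}$.

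It remains to upgrade $(p,q)\in R_{\alpha}^{\dag}$ to $(p,q)\in(\approx_k^{\alpha})_{\alpha}^{\dag}$. By the remark following Definition \ref{relation lifting}, $(p,q)\in R_{\alpha}^{\dag}$ iff $(p,q)\in(({\rm su}(p)\times {\rm su}(q))\cap R)_{\alpha}^{\dag}$, so by Lemma \ref{Prop lifted relation}(2) it suffices to prove the inclusion $({\rm su}(p)\times {\rm su}(q))\cap R\subseteq\ \approx_k^{\alpha}$. Pick $(a,b)\in({\rm su}(p)\times {\rm su}(q))\cap R$. Since $a\in{\rm su}(p)$ and $u\os{\tau}\lra p$, the state $a$ is a one-step successor of $u$, hence every state reachable from $a$ in at most $k$ steps is reachable from $u$ in at most $k+1$ steps; consequently the subsystem $\mathcal{S}(a,k)$ sits inside $\mathcal{S}(u,k+1)$, and likewise $\mathcal{S}(b,k)$ sits inside $\mathcal{S}(v,k+1)$. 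The restriction of $R$ to the state sets of $\mathcal{S}(a,k)$ and $\mathcal{S}(b,k)$ is then an $\alpha$-bisimulation under $\widetilde{S}$ between them containing $(a,b)$: every fuzzy transition occurring in $\mathcal{S}(a,k)$ issues from a state reachable from $a$ in fewer than $k$ steps, hence from a state reachable from $u$ in at most $k$ steps, and therefore occurs in $\mathcal{S}(u,k+1)$, so the matching clause of $R$ applies; moreover the successor and predecessor sets occurring in $\widetilde{S}(p',q',R)$ only touch ${\rm su}(p')$ and ${\rm su}(q')$, which already lie inside these subsystems, so the value of $\widetilde{S}$ is unchanged by the restriction and the witnessed lifted pairs remain lifted pairs. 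Thus $a\sim^{\alpha}b$ in $\mathcal{S}(a,k)$ and $\mathcal{S}(b,k)$, and the induction hypothesis gives $a\approx_k^{\alpha}b$. This establishes the inclusion, and hence $(p,q)\in(\approx_k^{\alpha})_{\alpha}^{\dag}$.

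The symmetric clause (every $v\os{\tau}\lra q$ is matched by some $u\os{\tau}\lra p$ with $(p,q)\in(\approx_k^{\alpha})_{\alpha}^{\dag}$) follows by the same argument applied to $R^{-1}$, which is an $\alpha$-bisimulation under $\widetilde{S}$ as well; by Definition \ref{Def k-limited bisimulation} we conclude $u\approx_{k+1}^{\alpha}v$, completing the induction. The step requiring the most care is the bookkeeping in the middle paragraph: one must be sure that restricting the witness $R$ to the smaller neighbourhoods $\mathcal{S}(a,k)$ and $\mathcal{S}(b,k)$ really preserves the $\alpha$-bisimulation-under-$\widetilde{S}$ property — in particular, that no transition needed inside $\mathcal{S}(a,k)$ has been truncated when we passed to $\mathcal{S}(u,k+1)$ (which is exactly why the radius must drop from $k+1$ to $k$ on passing to one-step successors), and that the lifting $(\cdot)_{\alpha}^{\dag}$ is insensitive to enlarging $R$ outside the relevant supports. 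Everything else is a direct unfolding of the definitions together with Lemma \ref{Prop lifted relation}(2).
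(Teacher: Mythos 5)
Your proposal follows essentially the same route as the paper: induct on $k$, use the witnessing $\alpha$-bisimulation $R$ in the $(k{+}1)$-neighbourhoods to match the first transition, cut the lifted pair down to ${\rm su}(p)\times{\rm su}(q)$ via the remark after Definition \ref{relation lifting} and Lemma \ref{Prop lifted relation}(2), and then invoke the induction hypothesis at the one-step successors. The paper's own proof simply asserts the crucial claim that $(u',v')\in({\rm su}(p)\times{\rm su}(q))\cap\sim^{\alpha}$ yields $u'\sim^{\alpha}v'$ in the radius-$k$ neighbourhoods of $u'$ and $v'$; your middle paragraph is an attempt to prove exactly that claim.

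That attempt, however, is one-sided, and this is the genuine gap. You verify that every transition of $\mathcal{S}(a,k)$ is still present in $\mathcal{S}(u,k{+}1)$, so the matching clause of $R$ can be invoked; but the matching transition $y\os{\tau}\lra q'$ that $R$ returns lives in $\mathcal{S}(v,k{+}1)$, and you never check that it survives in $\mathcal{S}(b,k)$. Because $\mathcal{S}(\cdot,k)$ is a subgraph of the original system (not a tree unfolding), a pair $(x,y)$ of your restricted relation can have $x$ at small distance from $a$, hence with all its transitions intact in $\mathcal{S}(a,k)$, while $y$ is reachable from $b$ only by paths of length exactly $k$ (even though it is within distance $k$ of $v$ by another route), so all of $y$'s transitions are truncated in $\mathcal{S}(b,k)$; for such a pair the clause of Definition \ref{Def app-bisimulation} fails, so ``the restriction of $R$ is an $\alpha$-bisimulation under $\widetilde{S}$'' is not true in general. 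A clean repair avoids re-truncating at the successors altogether: prove by induction on $j$ that every $(x,y)\in R$ with $x$ within distance $k-j$ of $u$ and $y$ within distance $k-j$ of $v$ satisfies $x\approx_j^{\alpha}y$ (all transitions of such $x,y$ in $\mathcal{S}$ are still present in $\mathcal{S}(u,k)$ and $\mathcal{S}(v,k)$, and the supports of the matched distributions lie one level deeper, so the hypothesis applies to $({\rm su}(p')\times{\rm su}(q'))\cap R$), then take $j=k$, $x=u$, $y=v$. This depth-stratified bookkeeping is in effect what the $k$-limited bisimulation vector does, and it is also why the paper's exact correspondence in Theorem \ref{Thm s approx t iff sim} is stated for the tree unfoldings $T_{\mathcal{S}}(u,k)$, where every node has a unique depth and the re-truncation problem you flagged disappears.
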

	\begin{proof}
		%We omit the proof of the case $k=0$. 
		With the case of $k=0$ (trivial) holds, $k=m$, and $u\sim^{\alpha}v$ in $\mathcal{S}(u,m+1)$ and $\mathcal{S}(v,m+1)$, we explain the case of $k=m+1$ also holds. In $\mathcal{S}(u,m+1)$ and $\mathcal{S}(v,m+1)$, since $u\sim^{\alpha}v$, $u\os{\tau}\lra p$ implies $v\os{\tau}\lra q$ such that $(p,q)\in(\sim^{\alpha})_{\alpha}^{\dag}$. By Definition \ref{Def app-bisimulation}, for any $(u',v')\in({\rm su}(p)\times {\rm su}(q))\cap\sim^{\alpha}$, $u'\sim^{\alpha}v'$ in $\mathcal{S}(u',m)$ and $\mathcal{S}(v',m)$, and so $u'\approx_{m}^{\alpha}v'$ by known conditions. Hence ${\rm su}(p)\times {\rm su}(q))\cap\sim^{\alpha}\subseteq\approx_{m}^{\alpha}$, and so $(p,q)\in(\approx_{m}^{\alpha})_{\alpha}^{\dag}$ by Definition \ref{relation lifting}. Similarly, the symmetric case can be analyzed. Therefore, $u\approx_{m+1}^{\alpha}v$. We complete the proof.
	\end{proof}
	
	Two states $u$ and $v$ are not $\alpha$-bisimilar under $\widetilde{S}$ in their $k$-neighbouring subsystems $\mathcal{S}(u,k)$ and $\mathcal{S}(v,k)$ does not mean that they are not $k$-limited $\alpha$-bisimilar. This fact can be explained by the following example.
	\begin{Eg}
		{\rm Consider the NFTS $\mathcal{S}=(V,\Sigma,\delta)$ provided in Fig. \ref{Figure2}, where $V=\{u,u_1,u_2,u_3,u_4,v,v_1,v_2,v_3,v_4,v_5\}$. If $\mathcal{L}$ is a {\L}ukasiewicz algebra, since there is no binary relation $R\subseteq V\times V$ containing $(u,v)$ to be a $0.8$-bisimulation under $\widetilde{S}$, $u$ and $v$ are not $0.8$-bisimilar under $\widetilde{S}$ in $\mathcal{S}(u,3)$ and $\mathcal{S}(v,3)$ by Definition \ref{Def app-bisimulation}, while $\{(u,v),(u_1,v_1),(u_2,v_2),(u_3,v_3),$ $(u_4,v_4),$ $(u_2,v_5)\}$ is a 3-limited $0.8$-bisimulation, i.e., $u$ and $v$ are $3$-limited $0.8$-bisimilar by Definition \ref{Def k-limited bisimulation}.
		}
	\end{Eg}
	
	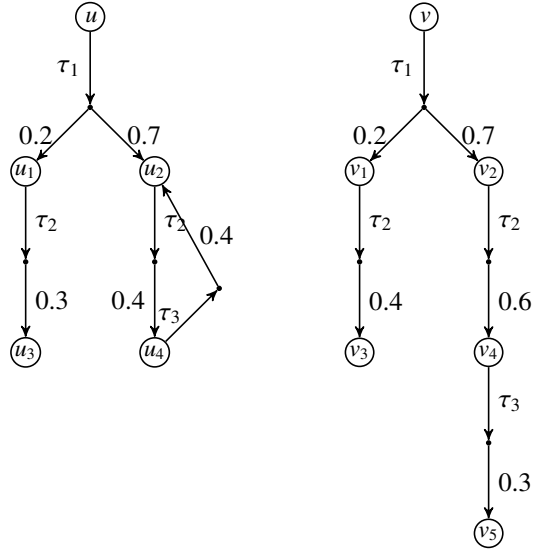
\begin{figure}[h] %%%\label{Figure2}
		\tikzstyle{point}=[coordinate,on grid]
		\tikzstyle{solid node}=[circle,draw,inner sep=0.5,fill=black]
		\tikzstyle{state}=[circle,draw,inner sep=0.5,scale=1,font=\bfseries\small]
		%\large]
		%\tikzstyle{solid node}=[circle,draw,inner sep=0.1,fill=black]
		\centering
		%%%%\begin{minipage}[t]{0.3\textwidth}
		%\centering
		%\begin{tikzpicture}[->,>=stealth',shorten >=1pt,auto,node distance=1.6cm,
		%                  semithick]
		%\tikzstyle{every state}=[fill=red,draw=none,text=white
		
		%\begin{minipage}[b]{0.5\textwidth}
		%\centering
		\begin{tikzpicture}[->,>=stealth',shorten >=0pt,auto,node distance=1.2cm,
			semithick]
			\node[state]         (B0)  {$\;u\,$};
			%%%\node[solid state]         (C1)  {};
			%%%\node[state]         (D2)  {$s_2$};
			\node[below of =B0] (o2) { };
			\node[solid node]         (C1) [below of=B0] {};
			%%%\node[state]         (B0) [above of=C1] {$s$};
			\node[state]         (C2) [below left of=C1] {$u_1$};
			%%%\node[state]         (E2) [right of=D2] {$s_3$};
			%%%  \node[solid node]         (C1) [above right of=C2] {};
			%%% \node[solid node]         (D1) [above right of=D2] {};
			%%%\node[state]         (B0) [above of=C1] {$s$};
			\node[state]         (D2)  [below right of=C1] {$u_2$};
			
			\node[solid node]         (C3) [below of=C2] {};
			\node[solid node]         (D3) [below of=D2] {};
			\node[state]         (E3) [below of=C3] {$u_3$};
			\node[state]         (E4) [below of=D3] {$u_4$};
			%%%\node[state]         (E5) [right of=E4] {$s_8$};
			\node[solid node]    (F1) [above right of=E4] {};

			\path (B0) edge node [left]{$\tau_1$} (C1);
			%%%\path (B0) edge node [right]{$a$} (D1);
			\path (C1) edge node [left] {$0.2$} (C2);
			\path (C1) edge node [right] {$0.7$} (D2);
			%%%\path (D1) edge node [above] {$0.9$} (D2);
			%%%\path (D1) edge node [right] {$\frac{n}{n+1}$} (E2);
			\path (C2) edge node {$\tau_2$} (C3);
			\path (D2) edge node {$\tau_2$} (D3);
			\path (C3) edge node {$0.3$} (E3);
			\path (D3) edge node [left] {$0.4$} (E4);
			\path (E4) edge node [left] {$\tau_3$} (F1);
			%%\path (E5) edge node  [right] {$0.4$} (F1);
			\path (F1) edge node  [right] {$0.4$} (D2);
			
			%%%\path (E2) edge node {$c$} (E3);
			%%%%\end{tikzpicture}
			%\caption{Quantitative fuzzy approximation space \uppercase\expandafter{\romannumeral3}.}\label{Fig2}%\hspace{0.0cm}\label{Fig3}%\label{Figunlabel}
			%%%%\end{minipage}
			%%%%\begin{minipage}[t]{0.2\textwidth}
			%%%%\centering
			%%%%\begin{tikzpicture}[->,>=stealth',shorten >=0.1pt,auto,node distance=1.6cm,
			%%%%                semithick]
			
			\node[state]         (C1') [right=4cm of B0] {$\;v\,$};
			\node[below of =C1'] (o2) { };
			
			\node[solid node]    (D1') [below of=C1'] {};
			\node[state]         (E1') [below left of=D1'] {$v_1$};
			\node[state]         (E2') [below right of=D1'] {$v_2$};
			\node[solid node]    (F1') [below of=E1'] {};
			\node[state]         (G1') [below of=F1'] {$v_3$};
			\node[solid node]    (F2') [below of=E2'] {};
			\node[state]         (G2') [below of=F2'] {$v_4$};
			
			\node[solid node]     (H1') [below of=G2'] {};
			\node[state]         (I1') [below of=H1'] {$v_5$};
			%%\node[state]         (E3') [below of=E2'] {$t_6$};
			
			\path (C1') edge node [left]{$\tau_1$} (D1');
			\path (D1') edge node [left]{$0.2$} (E1');
			
			\path (D1') edge node [right] {$0.7$} (E2');
			\path (E1') edge node [right] {$\tau_2$} (F1');
			\path (E2') edge node [right] {$\tau_2$} (F2');
			\path (F1') edge node [right] {$0.4$} (G1');
			\path (F2') edge node [right] {$0.6$} (G2');
			\path (G2') edge node [right] {$\tau_3$} (H1');
			\path (H1') edge node [right] {$0.3$} (I1');
			%%%\path (D1') edge node [right] {$1$} (E2');
		\end{tikzpicture}
		%\caption{Quantitative fuzzy approximation space \uppercase\expandafter{\romannumeral4}.}\label{Fig3}\vspace{0.4cm}
		%%%%\end{minipage}%
		\caption{States $u$ and $v$ are approximate bisimilar.} \label{Figure2}
	\end{figure}
	
	Let $\mathcal{S}=(V,\Sigma,\delta)$. In order to investigate the equivalent condition of $k$-limited $\alpha$-bisimilarity and $\alpha$-bisimilarity under $\widetilde{S}$, {\it $k$-neighbouring path induced subsystem} \cite{QiaoandFeng(20231)} is needed. We recall its construction for convenience.
	
   $k$-neighbouring path induced subsystem, denoted by $T_{\mathcal{S}}(u,k)=(V',\Sigma',\delta')$, $u\in V$ and $k\in\mathbb{N}$, is an NFTS via $\mathcal{S}$ which is constructed as follows.
	
	We denote $V$ as $\{u_1,u_2,\cdots,u_n\}$, $n\in\mathbb{N}$, and define array $D_{u}$ by
	\begin{align}\label{set of k transitions}
		D_{u}[0]=\{u\}, \quad D_{s}[i+1]=(\overrightarrow{R_{\delta}})_{D_{u}[i]},\quad 0\leq i\leq k,
	\end{align}
	where $D_{u}[i]$ is the set of states reached by $u$ after $i$ transitions. If $D_{u}[i]=\{u_{j_1},u_{j_2},\cdots,u_{j_m}\}$, we relabel $u_{j_p}$ by $u_{j_pi}$, $1\leq p\leq m$, and then let $D'_{u}[i]=\{u_{j_1i},u_{j_2i},\cdots,u_{j_mi}\}$, i.e., $D_{u}[i]$ is relabeled by $D'_{u}[i]$.
	
	We define $V'=\cup_{0\leq i\leq k}(D_u'[i]\cup D_v'[i])$ and $\Sigma'=\{\tau\in \Sigma\mid\exists u'\in V' \mbox{ such that } \delta(u',\tau)\neq\emptyset\}$. Next, define $\delta'\subseteq V'\times\Sigma'\times\mathcal{R}(V')$ by
	\[
	\begin{aligned}
	%\begin{equation*}
		%\begin{aligned}
			\delta'(u',\tau)=
			\{p'\in\mathcal{R}(V)\mid p'(u')=\left\{\begin{array}{ll}
				p(u_{j_p}), & \textrm{if $u'=u_{j_pi}\in V'$ for some $0\leq i\leq k$},\\
				0, &\textrm{otherwise};
			\end{array}\right. ,
		\mbox{ for any } p\in\delta(u',\tau)
		\}, 
		%\end{aligned}
	%\end{equation*}
\end{aligned}
\]
%\begin{equation} 
	%(x_i=\left\{\begin{array}{l}
		%{\inf\left\{x;V_0(x) = \dfrac{1}{N}\right\} ,i=1,2, \cdots ,N-1,}\\
		%{\inf \left\{x;V_0(x) = 1-\dfrac{1}{2N}\right\},i=N.}
	%\end{array}\right.\label{43} )
%\end{equation}
%&\mbox{ for any $u'\in V$ and $\tau\in\Sigma'$}.
for any $u'\in V$ and $\tau\in\Sigma'$.
By the construction, $T_{\mathcal{S}}(u,k)$ is a tree. Now we illustrate the construction of $T_{\mathcal{S}}(u,k)$ by the example below.
	\begin{Eg}
		{\rm Let us reconsider the NFTS $\mathcal{S}=(V,\Sigma,\delta)$ shown in Fig. \ref{Figure2}. Then
			%\begin{align*}
			%	D_u[0]=\{u\}, D_u[1]=\{u_1,u_2\}, D_u[3]=\{u_3,u_4\}, %D_u[4]=\{u_2\}.
			%\end{align*}
			\begin{align*}
				D_u[0]=\{u\}, \quad D_u[1]=\{u_1,u_2\}, \quad
				D_u[3]=\{u_3,u_4\}, \quad &D_u[4]=\{u_2\}.
			\end{align*}
			If we let $u_0=u$, the 3-neighbouring path induced subsystem with respect to $u$ is $T_{\mathcal{S}}(u,k)=(V',\Sigma',\delta')$, where
			\begin{align*}
				&S'=\{u_{00},u_{11},u_{21},u_{32},u_{42},u_{23}\}, \quad \Sigma'=\{\tau_1,\tau_2,\tau_3\},\\
				&\delta'(u_{00},\tau_1)=\{\frac{0.2}{u_{11}}+\frac{0.7}{u_{21}}\}, \quad \delta'(u_{11},\tau_2)=\{\frac{0.3}{u_{32}}\}, \quad \delta'(u_{21},\tau_2)=\{\frac{0.4}{u_{42}}\}, \quad \delta'(u_{42},\tau_3)=\{\frac{0.4}{u_{23}}\}.
			\end{align*}
		}
	\end{Eg}

	Then by the construction of $k$-neighbouring path induced subsystem, Definition \ref{Def k-limited bisimulation}, and Lemma \ref{Prop lifted relation}, the following result holds.
	
	\begin{Lem}\label{Lem s sim t}
		{Let $\mathcal{S}=(V,\Sigma,\delta)$ be an NFTS, $k\in\mathbb{N}$ and $\alpha\in[0,1]$. For any $u,v\in V$, $u'\in(\overleftarrow{R_{\delta}})_u$ and $v'\in(\overleftarrow{R_{\delta}})_v$, $u\sim^{\alpha}v$ in $T_{\mathcal{S}}(u,k)$ and $T_{\mathcal{S}}(v,k)$ iff $u\sim^{\alpha}v$ in $T_{\mathcal{S}}(u',k+1)$ and $T_{\mathcal{S}}(u',k+1)$.
		}
	\end{Lem}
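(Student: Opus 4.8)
The statement is an ``iff'' about $\alpha$-bisimilarity under $\widetilde{S}$ surviving when we pass between a $k$-neighbouring path induced subsystem rooted at $u$ (resp.\ $v$) and a $(k+1)$-neighbouring path induced subsystem rooted at a predecessor $u'$ (resp.\ $v'$). The plan is to exploit the tree structure of $T_{\mathcal{S}}(\cdot,k)$ together with Proposition \ref{Prop s sim t} and Theorem \ref{Thm iff limited bisimilarity}, reducing the claim to the already-established equivalence between $\alpha$-bisimilarity under $\widetilde{S}$ in the $k$-neighbouring subsystems and $k$-limited $\alpha$-bisimilarity. Concretely, I would first show that $u\sim^{\alpha}v$ in $T_{\mathcal{S}}(u,k)$ and $T_{\mathcal{S}}(v,k)$ iff $u\approx_{k}^{\alpha}v$ (this is essentially the content of Theorem \ref{Thm iff limited bisimilarity} combined with Proposition \ref{Prop s sim t} and the construction of $H_k^{\alpha}(u,v)$, since the relevant distributions live in $\mathcal{R}(D_u[i])\times\mathcal{R}(D_v[i])$). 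Then the lemma becomes: $u\approx_k^{\alpha}v$ iff $u\sim^{\alpha}v$ in $T_{\mathcal{S}}(u',k+1)$ and $T_{\mathcal{S}}(v',k+1)$ for $u'\in(\olla{R_{\delta}})_u$, $v'\in(\olla{R_{\delta}})_v$.

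\textbf{Key steps.} First I would unfold the construction of $T_{\mathcal{S}}(u',k+1)$: since $u'\in(\olla{R_{\delta}})_u$, we have $u\in D_{u'}[1]$, and more generally every state reachable from $u$ within $j$ transitions sits in the relabelled layer $D'_{u'}[j+1]$; thus $T_{\mathcal{S}}(u,k)$ appears (up to relabelling of states) as the subtree of $T_{\mathcal{S}}(u',k+1)$ hanging below the copy $u_{\cdot 1}$ of $u$. Second, for the forward direction, assume $u\sim^{\alpha}v$ in $T_{\mathcal{S}}(u,k)$ and $T_{\mathcal{S}}(v,k)$; by the first step this gives $u\approx_k^{\alpha}v$, and I would then build a witnessing relation in $T_{\mathcal{S}}(u',k+1)\uplus T_{\mathcal{S}}(v',k+1)$ by taking the union over layers $0\le i\le k$ of $\{(u_{a(i+1)},v_{b(i+1)})\mid (u_a,v_b)\in H_k^{\alpha}(u,v)[i+1]\}$, i.e.\ exactly the greatest $k$-limited $\alpha$-bisimulation vector transported into the layers $1$ through $k+1$ of the $(k+1)$-subsystems. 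One checks that this relation is an $\alpha$-simulation under $\widetilde{S}$ (and symmetrically its inverse) using Definition \ref{Def app-bisimulation} and the fact that in a tree each state in layer $i+1$ has its successors only in layer $i+2$, so the matching condition for a transition from $u$ reduces to $(p,q)\in(H_k^{\alpha}(u,v)[i+2])^{\dag}_{\alpha}$, which holds by the defining recursion (\ref{sequences approx}); Lemma \ref{Prop lifted relation}(2) handles the support-restriction bookkeeping. Third, for the converse, assume $u\sim^{\alpha}v$ in $T_{\mathcal{S}}(u',k+1)$ and $T_{\mathcal{S}}(v',k+1)$; restricting the witnessing relation to the subtrees below the copies of $u$ and $v$ — which are precisely (relabelled) copies of $T_{\mathcal{S}}(u,k)$ and $T_{\mathcal{S}}(v,k)$ — yields an $\alpha$-bisimulation under $\widetilde{S}$ there, whence $u\sim^{\alpha}v$ in $T_{\mathcal{S}}(u,k)$ and $T_{\mathcal{S}}(v,k)$; here one only needs that removing the root $u'$ (resp.\ $v'$) and the $\tau$-transition $u'\os{\tau}\lra p$ that reaches $u$ does not affect the simulation conditions checked at $u$ and below, since those conditions refer only to transitions from $u$ onward.

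\textbf{Main obstacle.} The delicate point is the relabelling: a state may occur in several layers $D_u[i]$, and in the path-induced subsystem construction it gets a \emph{distinct} copy $u_{\cdot i}$ in each layer, so the subtree of $T_{\mathcal{S}}(u',k+1)$ below the copy of $u$ is isomorphic to $T_{\mathcal{S}}(u,k)$ only after carefully tracking this indexing shift $i\mapsto i+1$. I expect the bulk of the work to be verifying that this layer shift is a genuine NFTS isomorphism — that $\delta'$ restricted to the subtree matches $\delta'$ of the smaller subsystem under the renaming — and that an $\alpha$-bisimulation under $\widetilde{S}$ is preserved by it; the lifting bookkeeping (equalities like $(p,q)\in R^{\dag}_{\alpha}$ iff $(p,q)\in((\mathrm{su}(p)\times\mathrm{su}(q))\cap R)^{\dag}_{\alpha}$, used to confine relations to the relevant layers) is routine given Definition \ref{relation lifting} but must be applied consistently. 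Everything else follows mechanically from Definition \ref{Def app-bisimulation}, Definition \ref{Def k-limited bisimulation}, Theorem \ref{Thm iff limited bisimilarity}, and the tree property.
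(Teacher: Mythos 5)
Your core argument is the right one, and it matches what the paper evidently intends (the paper offers no detailed proof of this lemma, only the remark that it follows from the construction of the path-induced subsystems, Definition \ref{Def k-limited bisimulation} and Lemma \ref{Prop lifted relation}): the part of $T_{\mathcal{S}}(u',k+1)$ reachable from the layer-one copy of $u$ is, after the index shift $i\mapsto i+1$, an isomorphic copy of $T_{\mathcal{S}}(u,k)$; it is successor-closed, its last layer is steady just as the last layer of $T_{\mathcal{S}}(u,k)$ is, and the lifting only sees $({\rm su}(p)\times {\rm su}(q))\cap R$, so a witnessing $\alpha$-bisimulation under $\widetilde{S}$ restricts to the two reachable subtrees, and conversely transports from the small systems into the large ones by Lemma \ref{Prop lifted relation}(2). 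Your converse direction is exactly this argument and is fine, including the bookkeeping you flag about the layer relabelling.

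The one genuine soft spot is your step 1 for the forward direction. The equivalence ``$u\sim^{\alpha}v$ in $T_{\mathcal{S}}(u,k)$ and $T_{\mathcal{S}}(v,k)$ iff $u\approx_k^{\alpha}v$'' is not available in the form you cite it: Proposition \ref{Prop s sim t} concerns the subsystems $\mathcal{S}(u,k)$, not the path-induced systems $T_{\mathcal{S}}(u,k)$, and it gives only one implication; the full equivalence for $T_{\mathcal{S}}(u,k)$ is Theorem \ref{Thm s approx t iff sim}, whose proof in the paper relies on this very lemma, so invoking it here would be circular. You can repair this either by deriving the needed implication from the unnumbered proposition of Section 5.1 (layered unions $\cup_{1\leq i\leq k+1}B[i]$ versus $k$-limited bisimulation vectors) together with Theorem \ref{Thm iff limited bisimilarity} --- after first intersecting an arbitrary witnessing bisimulation with the layer products $D'_u[i]\times D'_v[i]$, which remains a bisimulation because transitions move exactly one layer down --- or, more simply, by dropping the detour through $\approx_k^{\alpha}$ and $H_k^{\alpha}(u,v)$ altogether and pushing the witnessing relation through the layer-shift isomorphism in both directions, exactly as you already do for the converse. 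With that adjustment the proof is complete and is, in substance, the ``by construction'' argument the paper has in mind.
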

	
	Based on $\alpha$-bisimilarity under $\widetilde{S}$, an equivalent condition for determining two states to be $k$-limited $\alpha$-bisimilar is presented below.
	\begin{Thm}\label{Thm s approx t iff sim}
		{Given an NFTS ${\mathcal{S}}=(V,\Sigma,\delta)$, $k\in\mathbb{N}$, $\alpha\in[0,1]$ and $u,v\in V$. Then $u\approx_k^{\alpha}v$ iff $u\sim^{\alpha}v$ in $T_{\mathcal{S}}(u,k)$ and $T_{\mathcal{S}}(v,k)$.
		}
	\end{Thm}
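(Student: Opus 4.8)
The statement to be proved is Theorem~\ref{Thm s approx t iff sim}: for a finitary NFTS ${\mathcal{S}}=(V,\Sigma,\delta)$ and $u,v\in V$, we have $u\approx_k^{\alpha}v$ iff $u\sim^{\alpha}v$ in $T_{\mathcal{S}}(u,k)$ and $T_{\mathcal{S}}(v,k)$. The plan is to proceed by induction on $k$, using the tree structure of the $k$-neighbouring path induced subsystem and translating between the ``limited depth'' condition of Definition~\ref{Def k-limited bisimulation} and the ``unbounded but in a bounded subtree'' condition of Definition~\ref{Def app-bisimulation}.

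\emph{Base case.} For $k=0$, both sides are trivially true: $u\approx_0^{\alpha}v$ holds for all $u,v$ by Definition~\ref{Def k-limited bisimulation}(1), and $T_{\mathcal{S}}(u,0)$, $T_{\mathcal{S}}(v,0)$ consist of single steady states, so $\{(u_{00},v_{00})\}$ is vacuously an $\alpha$-bisimulation under $\widetilde{S}$ between them.

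\emph{Inductive step, sufficiency ($\Leftarrow$).} Assume the equivalence holds for $k=m$ and suppose $u\sim^{\alpha}v$ in $T_{\mathcal{S}}(u,m+1)$ and $T_{\mathcal{S}}(v,m+1)$. This direction is essentially Proposition~\ref{Prop s sim t} applied to the subsystems $\mathcal{S}(u,k)$, but one must be careful that the relevant subsystem there is the $k$-neighbouring path induced subsystem $T_{\mathcal{S}}$. Concretely: any transition $u_{00}\os{\tau}\lra p'$ in $T_{\mathcal{S}}(u,m+1)$ corresponds to $u\os{\tau}\lra p$ in $\mathcal{S}$ with $p$ supported on $D_u[1]$; by the $\alpha$-bisimulation condition there is $v_{00}\os{\tau}\lra q'$ with $(p',q')\in(\sim^{\alpha})^{\dag}_{\alpha}$; using Definition~\ref{relation lifting} we may restrict the lifting relation to $({\rm su}(p')\times{\rm su}(q'))\cap\sim^{\alpha}$, and each pair in that set is a pair of roots of subtrees isomorphic to $T_{\mathcal{S}}(u',m)$, $T_{\mathcal{S}}(v',m)$ on which $\alpha$-bisimilarity under $\widetilde{S}$ holds (here Lemma~\ref{Lem s sim t} lets us re-index the depth parameter). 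By the induction hypothesis these pairs satisfy $\approx_m^{\alpha}$, so $({\rm su}(p')\times{\rm su}(q'))\cap\sim^{\alpha}\subseteq\approx_m^{\alpha}$ and hence $(p,q)\in(\approx_m^{\alpha})^{\dag}_{\alpha}$ by Lemma~\ref{Prop lifted relation}(2). The symmetric condition is handled identically, so $u\approx_{m+1}^{\alpha}v$ by Definition~\ref{Def k-limited bisimulation}.

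\emph{Inductive step, necessity ($\Rightarrow$).} Assume $u\approx_{m+1}^{\alpha}v$. We build the candidate $\alpha$-bisimulation under $\widetilde{S}$ between $T_{\mathcal{S}}(u,m+1)$ and $T_{\mathcal{S}}(v,m+1)$ level by level, exploiting that these are trees: set $R=\bigcup_{0\le i\le m+1}R_i$ where $R_i\subseteq D_u'[i]\times D_v'[i]$ records which relabelled pairs at depth $i$ are $(m+1-i)$-limited $\alpha$-bisimilar in $\mathcal{S}$. Starting from $R_0=\{(u_{00},v_{00})\}$ (which is in the relation since $u\approx_{m+1}^{\alpha}v$), one unfolds Definition~\ref{Def k-limited bisimulation}: a pair $(u_{ji},v_{ji})\in R_i$ with $i\le m$ means $u_j\approx_{m+1-i}^{\alpha}v_j$, so each $u_j\os{\tau}\lra p$ is matched by $v_j\os{\tau}\lra q$ with $(p,q)\in(\approx_{m-i}^{\alpha})^{\dag}_{\alpha}$; restricting via Definition~\ref{relation lifting} to the supports, which lie in $D_u'[i+1]$, $D_v'[i+1]$, puts the matched pairs into $R_{i+1}$, and the transitions in $T_{\mathcal{S}}$ mirror this exactly by the construction of $\delta'$. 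Pairs at depth $m+1$ are steady states, so the $\alpha$-simulation condition is vacuous there. Thus $R$ and (by the symmetric argument plus Lemma~\ref{Prop lifted relation}(5)) $R^{-1}$ are $\alpha$-simulations under $\widetilde{S}$, giving $u\sim^{\alpha}v$ in $T_{\mathcal{S}}(u,m+1)$ and $T_{\mathcal{S}}(v,m+1)$.

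\emph{Main obstacle.} The delicate point is the bookkeeping that the relabelling in the construction of $T_{\mathcal{S}}(u,k)$ keeps each level disjoint and that a distribution $p$ in $\mathcal{S}$ reachable from $u_j\in D_u[i]$ corresponds under $\delta'$ to a distribution $p'$ with ${\rm su}(p')\subseteq D_u'[i+1]$ having the same values; one must check that lifting $\widetilde{S}$ is insensitive to this relabelling, i.e.\ that $\widetilde{S}(p,q,R)=\widetilde{S}(p',q',R')$ when $R'$ is the relabelled copy of $R\cap({\rm su}(p)\times{\rm su}(q))$. This is where Lemma~\ref{Lem s sim t} and the remark after Definition~\ref{relation lifting} (that $(p,q)\in R^{\dag}_{\alpha}$ iff $(p,q)\in(({\rm su}(p)\times{\rm su}(q))\cap R)^{\dag}_{\alpha}$) are the essential tools, and stating the level-by-level correspondence as a preliminary claim before running the induction is the cleanest way to organize it.
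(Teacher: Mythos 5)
Your proposal is correct, and the sufficiency direction is essentially the paper's argument: induction on $k$, matching a transition in $T_{\mathcal{S}}(u,m+1)$ against one in $T_{\mathcal{S}}(v,m+1)$, cutting the lifted relation down to $({\rm su}(p)\times{\rm su}(q))\cap\sim^{\alpha}$ via the remark after Definition~\ref{relation lifting}, re-indexing the depth with Lemma~\ref{Lem s sim t}, and invoking the induction hypothesis together with Lemma~\ref{Prop lifted relation}(2) to land in $(\approx_m^{\alpha})^{\dag}_{\alpha}$. Where you diverge is the necessity direction: the paper runs a second induction on $k$, shows per transition that $({\rm su}(p)\times{\rm su}(q))\cap\approx_k^{\alpha}$ is an $\alpha$-bisimulation under $\widetilde{S}$ between the depth-$k$ trees (using the inductive hypothesis plus Lemma~\ref{Lem s sim t}), pushes it into the greatest $\alpha$-bisimulation $R_1$ of the depth-$(k{+}1)$ trees, and then argues $(u,v)\in R_1$; you instead build one explicit witnessing relation $R=\bigcup_i R_i$ indexed by tree depth, with $R_i$ the relabelled pairs that are $(m{+}1{-}i)$-limited $\alpha$-bisimilar, and verify the $\alpha$-bisimulation conditions level by level, the last level being vacuous since those states are steady in the tree. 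Your construction avoids the paper's detour through the greatest bisimulation and makes the use of the tree structure and relabelling fully explicit (your ``main obstacle'' paragraph is exactly the bookkeeping the paper leaves implicit in its definition of $\delta'$), at the cost of having to state and check the level-by-level correspondence $\widetilde{S}(p,q,\cdot)=\widetilde{S}(p',q',\cdot)$ as a separate claim; the paper's version recycles Lemma~\ref{Lem s sim t} for that purpose but its necessity step reads more awkwardly. Both hinge on the same three ingredients (support restriction, Lemma~\ref{Lem s sim t}, the tree unfolding), so I regard your proof as a correct, mildly cleaner reorganization rather than a different method.
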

	\begin{proof}
		(Sufficiency.) The sufficiency holds for $k=0$. With the sufficiency holds for $k=m$, $m\geq0$, and $u\sim^{\alpha}v$ in $T_{\mathcal{S}}(u,k+1)$ and $T_{\mathcal{S}}(v,k+1)$, we show that $u\approx_{k+1}^{\alpha}v$. Let $R$ be the greatest $\alpha$-bisimulation under $\widetilde{S}$ between $T_{\mathcal{S}}(u,k+1)$ and $T_{\mathcal{S}}(v,k+1)$. Since $(u,v)\in R$, $u\os{\tau}\lra p$ implies $v\os{\tau}\lra q$ such that $(p,q)\in R_{\alpha}^{\dag}$. By Definition \ref{relation lifting}, we only need to prove that $({\rm su}(p)\times {\rm su}(q))\cap R$ is an $m$-limited $\alpha$-bisimulation.
		For any $(u',v')\in({\rm su}(p)\times {\rm su}(q))\cap R$, $u'\sim^{\alpha}v'$ in $T_{\mathcal{S}}(u,{m+1})$ and $T_{\mathcal{S}}(v,m+1)$. Since $u\in(\overleftarrow{R_{\delta}})_{u'}$ and $v\in(\overleftarrow{R_{\delta}})_{v'}$, by Lemma \ref{Lem s sim t}, $u'\sim^{\alpha}v'$ in $T_{\mathcal{S}}(u',m)$ and $T_{\mathcal{S}}(v',m)$. By the induction hypothesis, $u'\approx_m^{\alpha}v'$, and so $({\rm su}(u)\times {\rm su}(q))\cap R$ is an $m$-limited $\alpha$-bisimulation. By Lemma \ref{Prop lifted relation} (2), $(p,q)\in(\approx_{m}^{\alpha})_{\alpha}^{\dag}$. The symmetric case that any transition from $t$ can be analyzed analogously. Hence $u\approx_{m+1}^{\alpha}v$ by Definition \ref{Def k-limited bisimulation}. The sufficiency holds.
		
		(Necessity.) Since $T_{\mathcal{S}}(u,0)=(\{u\},\emptyset,\emptyset)$ and $T_{\mathcal{S}}(v,0)=(\{v\},\emptyset,\emptyset)$, $u\sim^{\alpha}v$ in $T_{\mathcal{S}}(u,0)$ and $T_{\mathcal{S}}(v,0)$, i.e., the necessity holds for $k=0$. With the condition that the necessity holds for $k\geq 0$, we testify that the necessity holds for $k+1$.
		By $u\approx_{k+1}^{\alpha}v$, $u\os{\tau}\lra p$ implies $v\os{\tau}\lra q$ that satisfies  $(p,q)\in(\approx_k^{\alpha})_{\alpha}^{\dag}$. Subsequently, we derive that $(p,q)\in(({\rm su}(p)\times {\rm su}(q))\cap\approx_k^{\alpha})_{\alpha}^{\dag}$ from Definition \ref{relation lifting}, and so showing  that $({\rm su}(p)\times {\rm su}(q))\cap\approx_k^{\alpha}$ is an $\alpha$-bisimulation under $\widetilde{S}$ between $T_{\mathcal{S}}(u,k)$ and $T_{\mathcal{S}}(v,k)$ is enough.
		For any $(u',v')\in({\rm su}(p)\times {\rm su}(q))\cap\approx_k^{\alpha}$, since $u\in(\overleftarrow{R_{\delta}})_{u'}$,
		$v\in(\overleftarrow{R_{\delta}})_{v'}$, and $u'\sim^{\alpha}v'$ in $T_{\mathcal{S}}(u,k+1)$ and $T_{\mathcal{S}}(v,k+1)$, by Lemma \ref{Lem s sim t}, $u'\sim^{\alpha}v'$ in $T_{\mathcal{S}}(u',k)$ and $T_{\mathcal{S}}(v',k)$, and so $u'\approx_k^{\alpha}v'$. It is known that the necessity holds in case $k$, hence $u'\sim^{\alpha}v'$ in $T_{\mathcal{S}}(u',k)$ and $T_{\mathcal{S}}(v',k)$, which implies that $({\rm su}(u)\times {\rm su}(v))\cap\approx_k^{\alpha}$ is an $\alpha$-bisimulation under $\widetilde{S}$. Let $R_1$ be the $\alpha$-bisimilarity under $\widetilde{S}$ between $T_{\mathcal{S}}(v,k+1)$ and $T_{\mathcal{S}}(v,k+1)$. Then ${\rm su}(p)\times {\rm su}(q))\cap\approx_k^{\alpha}\subseteq R_1$, and hence $(p,q)\in(R_1)_{\alpha}^{\dag}$ by Lemma \ref{Prop lifted relation} (2).
		Without changing the analysis method, the other assertion can be obtained. By Definition \ref{Def app-bisimulation} and $R_1$ is the greatest $\alpha$-bisimulation under $\widetilde{S}$ between $T_{\mathcal{S}}(u,k+1)$ and $T_{\mathcal{S}}(v,k+1)$, we have $(u,v)\in R_1$, i.e., $u\sim^{\alpha}v$ in $T_{\mathcal{S}}(u,k+1)$ and $T_{\mathcal{S}}(v,k+1)$. We complete the induction steps.
	\end{proof}
	
	For the special case that $k$ is the lager number of maximum lengths $l(u)$ from $u$ and $l(v)$ from $v$, the following fact holds.
	\begin{Coro}
		{Let $\mathcal{S}=(V,\Sigma,\delta)$ be an NFTS, $k\in\mathbb{N}$, $\alpha\in[0,1]$, and $u,v\in V$. Then $u\approx_{l(u)\vee l(v)}^{\alpha}v$ iff $u\sim^{\alpha}v$.
		}
	\end{Coro}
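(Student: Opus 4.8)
\emph{Proof idea.}
The corollary says that the $k$-limited $\alpha$-bisimilarity has already stabilized to the full $\alpha$-bisimilarity once $k$ reaches $l(u)\vee l(v)$, and the two implications are of quite different difficulty. I would first observe that the hypothesis $l(u)\vee l(v)\in\mathbb{N}$ forces $l(u),l(v)<\infty$, and that then every state $w$ reachable from $u$ (resp.\ from $v$) also has $l(w)<\infty$, with the sharper fact that along any single transition $w\os{\tau}\lra p$ and any $\bar w\in{\rm su}(p)$ one has $l(\bar w)\leq l(w)-1$ (prepend the edge $w\to\bar w$ to a longest path from $\bar w$). For the easy direction, $u\sim^{\alpha}v\Rightarrow u\approx_{l(u)\vee l(v)}^{\alpha}v$, I would in fact prove the stronger statement $\sim^{\alpha}\subseteq\approx_k^{\alpha}$ for \emph{every} $k\in\mathbb{N}$, by induction on $k$: the case $k=0$ is trivial, and for the step, if $(w,w')\in\sim^{\alpha}$ is witnessed by an $\alpha$-bisimulation $R$ under $\widetilde{S}$ (so $R\subseteq\sim^{\alpha}$), then $w\os{\tau}\lra p$ gives $w'\os{\tau}\lra q$ with $(p,q)\in R^{\dag}_{\alpha}$, and since $R\subseteq\sim^{\alpha}\subseteq\approx_k^{\alpha}$ by the induction hypothesis, Lemma \ref{Prop lifted relation}(2) yields $(p,q)\in(\approx_k^{\alpha})^{\dag}_{\alpha}$; the symmetric clause is identical, so $(w,w')\in\approx_{k+1}^{\alpha}$ by Definition \ref{Def k-limited bisimulation}. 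Specializing to $k=l(u)\vee l(v)$ completes this direction.

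For the converse, $u\approx_{l(u)\vee l(v)}^{\alpha}v\Rightarrow u\sim^{\alpha}v$, I would not route through the tree $T_{\mathcal{S}}(\cdot,k)$ but instead exhibit an explicit $\alpha$-bisimulation under $\widetilde{S}$ in $\mathcal{S}$, namely
\[
R=\{(w,w')\in V\times V\mid l(w)<\infty,\ l(w')<\infty,\ w\approx_{l(w)\vee l(w')}^{\alpha}w'\}.
\]
Here $(u,v)\in R$ by hypothesis, and $R$ is symmetric because each $\approx_m^{\alpha}$ is (Proposition \ref{Prop k-limited bisimulation}(4)), so it suffices to show $R$ is an $\alpha$-simulation under $\widetilde{S}$. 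Given $(w,w')\in R$ with $w\os{\tau}\lra p$, necessarily $l(w)\geq1$, so with $m=l(w)\vee l(w')\geq1$ Definition \ref{Def k-limited bisimulation} supplies $w'\os{\tau}\lra q$ with $(p,q)\in(\approx_{m-1}^{\alpha})^{\dag}_{\alpha}$, i.e.\ $\widetilde{S}(p,q,\approx_{m-1}^{\alpha})\geq\alpha$. Every $\bar w\in{\rm su}(p)$ is an immediate successor of $w$ and every $\bar w'\in{\rm su}(q)$ one of $w'$, so $l(\bar w)\vee l(\bar w')\leq m-1$, whence Proposition \ref{Prop k-limited bisimulation}(1) gives $\approx_{m-1}^{\alpha}\cap({\rm su}(p)\times{\rm su}(q))\subseteq R$. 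Using that $\widetilde{S}(p,q,\cdot)$ sees its relation argument only through its restriction to ${\rm su}(p)\times{\rm su}(q)$ (the remark following Definition \ref{relation lifting}) together with monotonicity of $\widetilde{S}$ in that argument (Lemma \ref{Prop lifted relation}(2)), I get $\widetilde{S}(p,q,R)\geq\widetilde{S}(p,q,\approx_{m-1}^{\alpha})\geq\alpha$, so $(p,q)\in R^{\dag}_{\alpha}$; the symmetric transition case is handled the same way. Hence $R$ and $R^{-1}=R$ are $\alpha$-simulations under $\widetilde{S}$, so $R$ is an $\alpha$-bisimulation under $\widetilde{S}$ (Definition \ref{Def app-bisimulation}) containing $(u,v)$, i.e.\ $u\sim^{\alpha}v$.

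The main obstacle is the converse implication, and within it the depth bookkeeping: one has to be sure that the index $l(w)\vee l(w')$ drops by at least one across every transition — which is precisely what lets the recursion in Definition \ref{Def k-limited bisimulation} be ``unrolled'' into a genuine (non-limited) bisimulation — and one has to get the $\widetilde{S}$-inequalities pointing the right way when the relating relation $\approx_{m-1}^{\alpha}$ is replaced by the (locally) larger relation $R$, which is exactly where the monotonicity in Lemma \ref{Prop lifted relation}(2) and the support-locality of $\widetilde{S}$ do the work. Once that fact is isolated, everything else is routine.
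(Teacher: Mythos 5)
Your proof is correct, but it takes a genuinely different route from the paper. The paper proves only the direction $u\approx_{l(u)\vee l(v)}^{\alpha}v\Rightarrow u\sim^{\alpha}v$, by induction on $l(u)\vee l(v)$ through Theorem \ref{Thm s approx t iff sim}: from $u\approx_{k+1}^{\alpha}v$ it obtains $u\sim^{\alpha}v$ in the unfoldings $T_{\mathcal{S}}(u,k+1)$ and $T_{\mathcal{S}}(v,k+1)$ and then concludes ``hence $u\sim^{\alpha}v$'', implicitly using that for $k\geq l(u)\vee l(v)$ these $k$-neighbouring path induced subsystems capture everything reachable from $u$ and $v$, so that $\alpha$-bisimilarity under $\widetilde{S}$ there transfers back to $\mathcal{S}$; the other direction is left unargued. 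You instead stay entirely inside $\mathcal{S}$: for the hard direction you exhibit the explicit relation $R$ of pairs $(w,w')$ with finite depths and $w\approx^{\alpha}_{l(w)\vee l(w')}w'$, and the depth bookkeeping $l(\bar w)\vee l(\bar w')\leq m-1$, combined with Proposition \ref{Prop k-limited bisimulation}(1), the support-locality remark after Definition \ref{relation lifting}, and Lemma \ref{Prop lifted relation}(2), shows $R$ is an $\alpha$-bisimulation under $\widetilde{S}$; for the easy direction you prove the general inclusion $\sim^{\alpha}\subseteq\approx_k^{\alpha}$ for every $k$. What each buys: the paper's argument is two lines once Theorem \ref{Thm s approx t iff sim} is available, but its final transfer step from the trees back to $\mathcal{S}$ is asserted rather than proved, whereas your construction is self-contained, supplies the omitted ``sufficiency'' half, and isolates exactly the finiteness facts ($l(u),l(v)<\infty$ and strict decrease of depth along transitions) that make the unrolling legitimate. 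Two small remarks: the inequality $\widetilde{S}(p,q,R)\geq\widetilde{S}(p,q,\approx_{m-1}^{\alpha})$ is not literally an instance of monotonicity (globally $R\not\supseteq\approx_{m-1}^{\alpha}$), but the restriction-to-supports step you invoke repairs this, and the threshold form of Lemma \ref{Prop lifted relation}(2) already yields $(p,q)\in R^{\dag}_{\alpha}$, which is all you need; and your claim that $w\os{\tau}\lra p$ forces $l(w)\geq 1$ tacitly assumes ${\rm su}(p)\neq\emptyset$, the same implicit nondegeneracy assumption the corollary itself (and the paper's tree construction) requires.
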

	\begin{proof}
		We only prove that the necessity is valid. It is easy to obtain that the necessity holds for $l(u)\vee l(v)=0$. With the assume that the statement holds for $l(u)\vee l(v)=k$, and $u\approx_{l(u)\vee l(v)}^{\alpha}v$ for $l(u)\vee l(v)=k+1$. By Theorem \ref{Thm s approx t iff sim}, we have that $u\sim^{\alpha}v$ in $T_{\mathcal{S}}(u,k+1)$ and $T_{\mathcal{S}}(v,k+1)$, and hence $u\sim^{\alpha}v$.
	\end{proof}
	
	\subsection{Logical characterization of $k$-limited $\alpha$-bisimulation}
	This subsection discusses the Hennessy-Milner property of $k$-limited $\alpha$-bisimulations in terms of the equivalence between the limited bisimulation and $\alpha$-bisimulation under $\widetilde{S}$, along with the fuzzy modal logic proposed by Qiao and Zhu \cite{Qiao(2022)} and adopted from \cite{Wuandchen(2018)}, which is given by  
	\begin{align*}
		%\varphi \colon\,\colon \colon\!\!\!\colon \\
		\varphi\ \colon\!\!\!\colon\!\!\!\!=&\mathrm{T}\mid\varphi_1\wedge\varphi_2\mid \varphi\rra s\mid s\rra\varphi\mid\varphi\otimes s\mid\langle \tau\rangle\psi, \\ %\colon \backslash\!\backslash
		\psi\ \colon\!\!\!\colon\!\!\!\!=&\psi_1\wedge\psi_2\mid\psi\rra s\mid s\rra\psi\mid{\varphi}^{\dag},
	\end{align*}
	where $\tau\in A$, $s\in[0,1]$, $\varphi\in\mathcal{L}^u$ is a state formula and $\psi\in\mathcal{L}^d$ is a distribution formula which are interpreted in an NFTS as follows \cite{Qiao(2022),Wuandchen(2018)}. 
	\begin{align*}
		&\llbracket{\mathrm{T}}\rrbracket(u)=1,\\ &\llbracket{\varphi_1\wedge\varphi_2}\rrbracket(u)=\llbracket{\varphi_1}\rrbracket(u)\wedge\llbracket{\varphi_2}\rrbracket(u),\\
		&\llbracket{\varphi\rra s}\rrbracket(u)=\llbracket{\varphi}(u)\rrbracket\rra s,\\
		&\llbracket{s\rra\varphi}\rrbracket(u)=s\rra\llbracket\varphi\rrbracket(u),\\
		&\llbracket{\varphi\otimes s}\rrbracket(u)=(\llbracket\varphi\rrbracket(u)\otimes s),\\
		%&\llbracket{\varphi\ominus p}\rrbracket(s)=(\llbracket\varphi\rrbracket(s)-p)\vee 0,\\
		&\llbracket{\la \tau\ra\psi}\rrbracket(u)=\bigvee_{u\os{\tau}\lra p}\llbracket{\psi}\rrbracket(p),
	\end{align*}
	%and a distribution formula $\psi\in\mathcal{L}^d$ evaluates in $p\in\mathcal{F}(V)$ as follows:
	\begin{align*}
		&\llbracket{\psi_1\wedge\psi_2}\rrbracket(p)=\llbracket{\psi_1}\rrbracket(p)\wedge\llbracket{\psi_2}\rrbracket(p),\\
		%&\llbracket{\varphi_1\wedge\varphi_2}\rrbracket(s)=\llbracket{\varphi_1}\rrbracket(s)\wedge\llbracket{\varphi_2}\rrbracket(s),\\
		&\llbracket{\psi\rra s}\rrbracket(p)=\llbracket{\psi}\rrbracket(p)\rra s,\\
		&\llbracket{s\rra\psi}\rrbracket(p)=s\rra\llbracket{\psi}\rrbracket(p),\\
		&\llbracket{{\varphi}^{\dag}}\rrbracket(p)=\bigvee_{u\in V}p(u)
		\wedge\llbracket{\varphi}\rrbracket(u).
		%(\llbracket\varphi\rrbracket(s)-p)\vee 0,\\
		%&\llbracket{\la a\ra\psi}\rrbracket(s)=\vee_{s\os{a}\lra\mu}\llbracket{\psi}\rrbracket(\mu),
	\end{align*}
	
	As in \cite{Qiao(2022),Wu(2016),Wuandchen(2018)}, $\llbracket{\la \tau\ra\psi}\rrbracket(u)$ is defind as the maximal $\llbracket \psi\rrbracket(p)$ with the condition $u\os{\tau}\lra p$.
	%the formula $\langle \tau\rangle\psi$ specifies the property
	%for a state to perform action $\tau$ and result in a possibility distribution satisfying $\psi$. Like \cite{Wu(2018)}, we also let $\llbracket{\la \tau\ra\psi}\rrbracket(u)$ be the maximal
	%$\llbracket \psi\rrbracket(p)$, where $p$ such that $u\os{\tau}\lra p$. $\varphi^{\dag}$
	%is the distribution formula induced by the state formula $\varphi\in\mathcal{L}^u$.
	\cite{Qiao(2022)} characterizes $\alpha$-bisimilarity under $\widetilde{S}$ resorting to the real-valued logic. Please refer to \cite{Qiao(2022),Wu(2016),Wuandchen(2018)} for explanations of other formulae. With the help of $k$-neighbouring path induced subsystems, real-valued logical characterization of $k$-limited $\alpha$-bisimilarity can be transformed into that of $\alpha$-bisimilarity under $\widetilde{S}$.
	\begin{Thm}\label{Logical k characterization}
		{ Let $\mathcal{S}=(V,\Sigma,\delta)$ be finitary, $\alpha\in[0,1]$, and $u,v\in V$.
			Suppose that the implication is induced by a G\"{o}del $t$-norm, or for any $r,s,t,o\in L$, the implication satisfies the following conditions:
			\begin{align*}
				%\label{Con1} 
				& \mbox{If } s\otimes t>0, \mbox{ then } s\rra s\otimes t=t.                     \\
				%\label{Con2} 
				& \mbox{If } t<r\leq s, \mbox{ then } s\rra t<s\rra r.                           \\
				%\label{Con3} 
				& \mbox{If } t<r \mbox{ and } t\otimes o>0, \mbox{ then } t\otimes o<r\otimes o.
			\end{align*}
			Then $u\approx_k^{\alpha}v$ iff $\llbracket{\varphi}\rrbracket(u)\leftrightarrow\llbracket{\varphi}\rrbracket(v)\geq\alpha$ for any $\varphi\in\mathcal{L}^u$ in $T_{\mathcal{S}}(u,k)$ and $T_{\mathcal{S}}(v,k)$.
		}
	\end{Thm}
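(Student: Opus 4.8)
The plan is to reduce the statement to the logical characterization of $\alpha$-bisimilarity under $\widetilde{S}$ already established in \cite{Qiao(2022)}, using Theorem \ref{Thm s approx t iff sim} as the bridge. By Theorem \ref{Thm s approx t iff sim}, $u\approx_k^{\alpha}v$ holds in $\mathcal{S}$ if and only if $u\sim^{\alpha}v$ in the two $k$-neighbouring path induced subsystems $T_{\mathcal{S}}(u,k)$ and $T_{\mathcal{S}}(v,k)$. Both $T_{\mathcal{S}}(u,k)$ and $T_{\mathcal{S}}(v,k)$ are finite (indeed finite trees, by the construction), so they are finitary NFTSs to which the results of \cite{Qiao(2022)} apply verbatim. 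Hence it suffices to invoke the logical characterization of $\alpha$-bisimilarity under $\widetilde{S}$ from \cite{Qiao(2022)}, under exactly the hypotheses stated here on the $t$-norm and its residuum (the G\"{o}del case, or the three structural conditions on $\otimes$ and $\rra$), to conclude that $u\sim^{\alpha}v$ in $T_{\mathcal{S}}(u,k)$ and $T_{\mathcal{S}}(v,k)$ iff $\llbracket{\varphi}\rrbracket(u)\leftrightarrow\llbracket{\varphi}\rrbracket(v)\geq\alpha$ for every state formula $\varphi\in\mathcal{L}^u$ interpreted in those two subsystems.

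Concretely, I would first fix notation: write $\mathcal{S}_u=T_{\mathcal{S}}(u,k)$ and $\mathcal{S}_v=T_{\mathcal{S}}(v,k)$, and observe that since $V$ is finite and $k\in\mathbb{N}$, the arrays $D_u[0],\dots,D_u[k]$ and $D_v[0],\dots,D_v[k]$ are all finite; therefore $V'$, $\Sigma'$ and $\delta'$ in each subsystem are finite, so $\mathcal{S}_u$ and $\mathcal{S}_v$ are finitary. Second, I would state as a lemma (cited from \cite{Qiao(2022)}, or re-proved there) that in a finitary NFTS under the stated $t$-norm hypotheses, for any two states $x,y$ one has $x\sim^{\alpha}y$ iff $\llbracket{\varphi}\rrbracket(x)\leftrightarrow\llbracket{\varphi}\rrbracket(y)\geq\alpha$ for all $\varphi\in\mathcal{L}^u$. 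Third, apply this lemma with $x=u$, $y=v$, and the ambient system being the disjoint pair $\mathcal{S}_u,\mathcal{S}_v$ (exactly the setting in which ``$u\sim^{\alpha}v$ in $T_{\mathcal{S}}(u,k)$ and $T_{\mathcal{S}}(v,k)$'' is defined). Fourth, chain this equivalence with Theorem \ref{Thm s approx t iff sim} to obtain the claimed biconditional.

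The main obstacle I anticipate is not the reduction itself, which is immediate, but making precise the claim that the logical characterization of \cite{Qiao(2022)} transfers without change to the induced subsystems. Two points need care. First, the formula interpretation $\llbracket{\cdot}\rrbracket$ in Theorem \ref{Logical k characterization} is explicitly taken \emph{in $T_{\mathcal{S}}(u,k)$ and $T_{\mathcal{S}}(v,k)$}, so there is no semantic mismatch to resolve, but one should note that the relabelling $u_{j_p}\mapsto u_{j_pi}$ used in the construction makes $\mathcal{S}_u$ and $\mathcal{S}_v$ have disjoint state sets, so that $\sim^{\alpha}$ between them and formula evaluation on each are well defined; I would remark on this briefly. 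Second, one must check that the hypotheses on $\otimes$ and $\rra$ in the present theorem are precisely those required by the cited characterization in \cite{Qiao(2022)} — the G\"{o}del case plus the three conditions $s\otimes t>0\Rightarrow(s\rra s\otimes t)=t$, strict monotonicity of $s\rra(\cdot)$ above the ``cut'', and strict monotonicity of $(\cdot)\otimes o$ on positives — so that the quoted result is applicable as-is. Once these bookkeeping points are dispatched, the proof is a two-line composition of Theorem \ref{Thm s approx t iff sim} with the cited logical characterization, and I would present it as such.
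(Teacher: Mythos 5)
Your proposal is correct and follows essentially the same route as the paper's own proof: it applies Theorem \ref{Thm s approx t iff sim} to convert $u\approx_k^{\alpha}v$ into $u\sim^{\alpha}v$ in $T_{\mathcal{S}}(u,k)$ and $T_{\mathcal{S}}(v,k)$, and then invokes the logical characterization of $\alpha$-bisimilarity under $\widetilde{S}$ from \cite{Qiao(2022)} (Theorems 6.1 and 6.2 there) to finish. Your additional remarks on the finiteness of the induced subsystems and on matching the $t$-norm hypotheses are sound bookkeeping that the paper leaves implicit.
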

	\begin{proof}
		By Theorem \ref{Thm s approx t iff sim}, $u\approx_k^{\alpha}v$ iff $u\sim^{\alpha}v$ in $T_{\mathcal{S}}(u,k)$ and $T_{\mathcal{S}}(v,k)$. By Theorems 6.1 and 6.2 in \cite{Qiao(2022)}, $u\sim^{\alpha}v$ in $T_{\mathcal{S}}(u,k)$ and $T_{\mathcal{S}}(v,k)$ iff $\llbracket{\varphi}\rrbracket(u)\leftrightarrow\llbracket{\varphi}\rrbracket(v)\geq\alpha$ for any $\varphi\in\mathcal{L}^u$ in $T_{\mathcal{S}}(u,k)$ and $T_{\mathcal{S}}(v,k)$. Hence $u\approx_k^{\alpha}v$ iff $\llbracket{\varphi}\rrbracket(u)\leftrightarrow\llbracket{\varphi}\rrbracket(v)\geq\alpha$ for any $\varphi\in\mathcal{L}^u$ in $T_{\mathcal{S}}(u,k)$ and $T_{\mathcal{S}}(v,k)$. We complete the proof.
	\end{proof}
	\begin{Rem}\label{Thm k loic}
		{\rm The implications can be induced by at least {\L}ukasiewicz norm, Product norm, and G\"{o}del norm.
			%%%Hence Lemma \ref{Lem app} holds on the above two complete residuated lattices.
		}
	\end{Rem}

	\section{Conclusion and future work}
	We have investigated $k$-limited $\alpha$-bisimilarity for NFTSs in the neighbouring subgraphs. Using fixed point theory, the fixed-pint characterization of limited bisimulation has been completed via constructing a monotonic function of which a post-fixed point is a limited bisimulation. Then, through $k$-limited $\alpha$-bisimilarity, $k$-limited $\alpha$-bisimulation and $k$-limited $\alpha$-bisimulation vector have been introduced. The equivalence between them has been discussed. Moreover,
	%Then we proved that $k$-limited $\alpha$-bisimulation vector and $\alpha$-bisimulation under $\widetilde{S}$ are equivalent.
	conditions for two states to be $k$-limited $\alpha$-bisimilar have been discussed. These theoretic results build a foundation for the design of the algorithm for computing the degree of similarity. 
	Based on $k$-neighbouring path induced subsystem, two states are $k$-limited $\alpha$-bisimilar when and only when they are $\alpha$-bisimilar under lifting function $\widetilde{S}$ in their $k$-neighbouring path induced subsystems. 
	With this equivalence, a logical characterization of $k$-limited $\alpha$-bisimilarity has been showed and it has proved that two states $u$ and $v$ are $k$-limited $\alpha$-bisimilar iff $\llbracket{\varphi}\rrbracket(u)\leftrightarrow\llbracket{\varphi}\rrbracket(v)\geq\alpha$ for any real-valued formula $\varphi\in\mathcal{L}^u$ in their $k$-neighbouring path induced subsystems.
	
	Future work of interest may be on the state reduction of NFTSs and graph pattern matching resorting to limited bisimulations proposed, along with limited fuzzy bisimulations and distribution-based limited fuzzy bisimulations for NFTSs. It is interesting to introduce limited bisimulation for Boolean networks and Boolean control networks to study their quotients \cite{Li(2021),Li(2023),Zhao(2023)}.   
	
\section*{Acknowledgements}
%This work was supported in part by the National Natural Science Foundation of China under Grants 61672107, 62273201, and in part by Research Fund for the Taishan Scholar Project of Shandong Province of China under Grants tstp20221103.
This work was supported by the National Natural Science Foundation of China under Grants 12301589, 62273201, 62172048, and 62303170, and by the Research Fund for the Taishan Scholar Project of Shandong Province of China under Grant tstp20221103.

%\bibliographystyle{abbrv}
%\section*{References}
%\bibliographystyle{elsarticle-harv}
%\bibliography{RS}

%\bibliographystyle{elsarticle-num}
%\bibliographystyle{abbrv}
%\bibliographystyle{plain}
%\bibliography{RS}
%\section*{References}

\end{document}